\documentclass[a4paper,twoside,10pt]{article}

\usepackage[a4paper,left=3cm,right=2.5cm, top=3cm, bottom=3cm]{geometry}
\usepackage[latin1]{inputenc}
\usepackage{mathrsfs}
\usepackage{enumerate}
\usepackage{graphicx}
\usepackage{epstopdf}
\epstopdfsetup{
    suffix=,
}
\usepackage{amsmath}
\usepackage{amsthm}
\usepackage{algorithm,algpseudocode}
\usepackage{amssymb}
\usepackage{hyperref}
\usepackage{stmaryrd}
\usepackage{subfigure}
\usepackage{float}
\usepackage{bigints}
\usepackage{cite}
\usepackage{color}
\usepackage[abs]{overpic}
\usepackage[font=footnotesize,labelfont=bf]{caption}
\usepackage{cases}
\usepackage[author={Lorenzo}]{pdfcomment}
\usepackage{soul}
\usepackage[table]{xcolor}
\usepackage{comment}

\restylefloat{table}
\theoremstyle{plain}
\newtheorem{thm}{Theorem}[section]
\newtheorem{cor}[thm]{Corollary}
\newtheorem{lem}[thm]{Lemma}
\newtheorem{prop}[thm]{Proposition}
\theoremstyle{definition}

\theoremstyle{remark}
\newtheorem{remark}[thm]{Remark}
\newtheorem{assumption}[thm]{Assumption}

\newcommand{\sm}{\sigma_m} 
\newcommand{\sv}{\sigma_v} 

\newcommand{\eremk}{\hbox{}\hfill\rule{0.8ex}{0.8ex}}
\newcommand{\h}{h} 
\newcommand{\PItD}{\mathcal P_{ItD}} 
\newcommand{\T}{\mathcal T} 
\newcommand{\A}{\mathcal A} 
\newcommand{\Aprime}{\mathcal A'} 
\newcommand{\B}{\mathcal B} 
\newcommand{\Ak}{\A_k} 
\newcommand{\Aprimek}{\Aprime_k} 
\newcommand{\Bk}{\B_k} 
\newcommand{\vtilde}{v^{ext}} 
\newcommand{\vext}{\vtilde} 
\newcommand{\vhtilde}{\vtilde_\h} 
\newcommand{\lambdah}{\lambda_\h} 
\newcommand{\uext}{u^{ext}} 
\newcommand{\uhext}{\uext_\h} 
\newcommand{\f}{f} 
\newcommand{\ftilde}{\widetilde \f} 
\renewcommand{\k}{k n} 
\renewcommand{\Re}{\operatorname{\mathbb {RE}}} 
\newcommand{\m}{m} 
\newcommand{\mh}{\m_\h} 
\newcommand{\xbold}{\mathbf x} 
\newcommand{\ybold}{\mathbf y} 
\newcommand{\nbold}{\mathbf n}
\newcommand{\Omegap}{\Omega^+} 
\newcommand{\nGamma}{\mathbf n_{\Gamma}} 
\newcommand{\V}{\mathcal V} 
\newcommand{\Vz}{\V_0} 
\newcommand{\Vtilde}{\widetilde \V} 
\newcommand{\Vk}{\V_k} 
\newcommand{\Vtildek}{\Vtilde_k} 
\newcommand{\Ntildek}{\Ncal} 
\newcommand{\Ncal}{\widetilde{\mathcal N}_k} 
\newcommand{\K}{\mathcal K} 
\newcommand{\Kz}{\K_0} 
\newcommand{\Kprime}{\K'} 
\newcommand{\Kprimez}{\Kprime_0} 
\newcommand{\Ktilde}{\widetilde \K} 
\newcommand{\W}{\mathcal W} 
\newcommand{\Wz}{\W_0} 
\newcommand{\Kk}{\K_k} 
\newcommand{\Kprimek}{\Kprime_k} 
\newcommand{\Ktildek}{\Ktilde_k} 
\newcommand{\Wk}{\W_k} 
\newcommand{\Pcalh}{\mathcal P_\h} 
\renewcommand{\S}{\mathcal S} 
\newcommand{\p}{p} 

\newcommand{\SV}{\S_{\V}}
\newcommand{\SK}{\S_{\K}}
\newcommand{\SKprime}{\S_{\Kprime}}
\newcommand{\SW}{\S_{\W}}
\newcommand{\SVtilde}{\widetilde{\S}_{\V}}
\newcommand{\SKtilde}{\widetilde{\S}_{\K}}

\newcommand{\AVtilde}{\widetilde{\A}_{\V}}
\newcommand{\AKtilde}{\widetilde{\A}_{\K}}

\newcommand{\gammaz}{\gamma_0} 
\newcommand{\gammazint}{\gammaz^{int}} 
\newcommand{\gammazext}{\gammaz^{ext}} 
\newcommand{\gammao}{\gamma_1} 
\newcommand{\gammaoint}{\gammao^{int}} 
\newcommand{\gammaoext}{\gammao^{ext}} 
\newcommand{\psim}{\psi_m} 
\newcommand{\psitilde}{\psi^{ext}} 
\newcommand{\psiext}{\psi^{ext}} 
\newcommand{\psih}{\psi_\h} 
\newcommand{\psimh}{\psi_{m\h}} 
\newcommand{\psitildeh}{\psi_\h^{ext}} 
\newcommand{\n}{\mathbf n} 
\newcommand{\Lcal}{\mathcal L} 
\newcommand{\Lcalk}{\Lcal_k} 
\newcommand{\C}{\mathcal C} 
\newcommand{\Ck}{\C_\k} 
\newcommand{\M}{\mathcal M} 

\newcommand{\Rm}{R_\m}
\newcommand{\Rext}{R^{ext}}
\newcommand{\Mk}{\M_\k} 
\newcommand{\U}{\mathcal U} 
\newcommand{\Uk}{\U_\k} 
\newcommand{\aaleph}{\aleph_k} 
\newcommand{\bbeth}{\beth_k} 
\newcommand{\ddaleth}{\daleth_k} 
\newcommand{\ddalethint}{\daleth^{int}_k} 
\newcommand{\ddalethext}{\daleth^{ext}_k} 
\newcommand{\Fcal}{\mathcal{F}} 
\newcommand{\uh}{u_h} 
\newcommand{\vh}{v_h} 
\newcommand{\nh}{n_\h} 
\newcommand{\vexth}{v_\h^{ext}} 
\newcommand{\Vh}{V_h} 
\newcommand{\Wh}{W_h} 
\newcommand{\Zh}{Z_h} 
\newcommand{\Calderon}{\text{Calder\'on}}
\newcommand{\cG}{c_G}
\newcommand{\rr}{r}
\renewcommand{\rm}{\rr_\m}
\newcommand{\rext}{\rr^{ext}}
\renewcommand{\div}{\operatorname{div}}
\newcommand{\Ad}{\mathfrak a}
\newcommand{\RHSLcalk}{RHS_{\Lcalk}}

\newcommand{\Pcal}{\mathcal P}
\newcommand{\Rcal}{\mathcal R}
\newcommand{\Zcal}{\mathcal Z}


\definecolor{ilariagreen}{rgb}{0 0.7 0.2}


\newcommand{\vertiii}[1]{{\left\vert\kern-0.25ex\left\vert\kern-0.25ex\left\vert #1  \right\vert\kern-0.25ex\right\vert\kern-0.25ex\right\vert}}

\setcounter{secnumdepth}{4}
\setcounter{tocdepth}{4}


\numberwithin{equation}{section}
 
\begin{tiny}
\author{Lorenzo Mascotto \thanks{Fakult\"at f\"ur Mathematik, Universit\"at Wien, 1090 Vienna, Austria\newline E-mail: {\tt lorenzo.mascotto@univie.ac.at, ilaria.perugia@univie.ac.at, alexander.rieder@univie.ac.at}} \and 
Jens M.~Melenk \thanks{Institut f\"ur Analysis und Scientific Computing, TU Wien, 1040 Vienna, Austria\newline E-mail: {\tt melenk@tuwien.ac.at}} \and 
Ilaria Perugia \footnotemark[1] 
\and 
Alexander Rieder \footnotemark[1]}
\end{tiny}

\date{}

\title{\textbf{\normalsize{FEM-BEM mortar coupling for the Helmholtz problem in three dimensions}}}

\begin{document}
\maketitle

\begin{abstract}
\noindent
We present a FEM-BEM coupling strategy for time-harmonic acoustic scattering  in media with variable sound speed.
The coupling is realized with the aid of a mortar variable that is an impedance trace on the coupling boundary.
The resulting sesquilinear form is shown to satisfy a  G{\aa}rding inequality.
Quasi-optimal convergence is shown for sufficiently fine meshes.
Numerical examples confirm the theoretical convergence results.  

\medskip\noindent
\textbf{Keywords}: finite element method; boundary element method; mortar coupling; Helmholtz equation; variable sound speed
\end{abstract}

\section{Introduction}
We analyze a numerical method for acoustic scattering in media with variable  sound speed.
The speed of sound may be variable in a bounded domain~$\Omega$  whereas it is assumed to be constant in~$\Omegap:= {\mathbb R}^3 \setminus \overline{\Omega}$.
Mathematically, in the time-harmonic setting under consideration here, this problem is described by the Helmholtz equation
\begin{equation}
\label{eq:helmholtz} 
-\div (\Ad \nabla u )- (kn)^2 u = \ftilde \quad \mbox{
  in~${\mathbb R}^3$},
\end{equation}
where~$\Ad$ is a smooth diffusion parameter such that the
  support of~$1-\Ad$ is contained in~$\Omega$, $\ftilde$ is a 
  function with support contained in $\Omega$,
$k\ge k_0> 0$ is the wavenumber, and the function $n \in L^\infty({\mathbb R}^3)$ is the ratio of the local sound speed to the speed of sound in the homogeneous part~$\Omegap$. 
In particular, $n\equiv 1$ in~$\Omegap$. We also assume the
medium is fully penetrable, i.e., $\vert n(\xbold) \vert \ge c_0 > 0$
a.e. in~${\mathbb R}^3$.

One computational challenge for this problem class is that the problem is posed in the full space~${\mathbb R}^3$.
Since the medium is assumed homogeneous in the unbounded domain~$\Omegap$, boundary integral equations techniques can be employed to reduce  the problem in~$\Omegap$ to~$\partial\Omega$ 
and subsequently use a boundary element method (BEM) for the discretization. 
In the bounded region~$\Omega$, the physical situation may be more complex and a description by partial differential equations and a numerical treatment by the finite element method (FEM) is more appropriate.
These considerations make a FEM-BEM coupling attractive. 
In the present work, we analyze a FEM-BEM coupling that 
is based on three fields: the solution~$u$ in~$\Omega$, the exterior Dirichlet trace~$u^{ext}$ of the solution on~$\partial\Omega$, and  a mortar variable~$m$ on~$\partial\Omega$, which is the Robin trace $m = \partial_{\nbold} u + \operatorname{i} k u$ of the solution~$u$ on~$\partial\Omega$. 

For symmetric positive definite problems, various FEM-BEM coupling techniques 
have been presented and analyzed in the past such as the so-called
symmetric coupling of Costabel, \cite{costabel88a}, which uses both
equations of the Calder\'on system, and so-called one-equation 
couplings that employ only one of the two equations of the Calder\'on 
system, 
\cite{sayas09,steinbach11,aurada-feischl-fuehrer-karkulik-melenk-praetorius13}. 
Also so-called three-field techniques, which are similar to the coupling strategy
pursued here, have been analyzed in, e.g., \cite{carstensen-funken99,erath12} in the symmetric setting.
  
While coupling is reasonably well-understood for symmetric positive definite
problems, the situation is less developed for Helmholtz problems. 

One strategy to solve~\eqref{eq:helmholtz} 
is to rely on the exterior Dirichlet-to-Neumann (DtN) operator; see, e.g., the analysis in~\cite{melenk-sauter10}.
Numerically, the resulting system has a fully populated sub-block arising from the DtN operator, and the 
full system matrix has no easily identifiable invertible sub-blocks. 
From a computational point of view, however, methods that have identifiable invertible subproblems are of interest. In the present work, 
therefore, we study a coupling strategy 
where the subproblem for the unknown~$u$ (the solution on~$\Omega$) 
is a well-posed problem. Such a strategy cannot rely on the interior 
DtN or Neumann-to-Dirichlet (NtD) maps if~$k$ is an eigenvalue of the 
interior Dirichlet or Neumann problem;
see, e.g., ~\cite{Gatica_VEMBEM} and the discussion in~\cite[Sec.~{2}, {3}]{steinbach13}.
One technique to overcome this deficiency of the interior DtN or NtD operator 
is to employ---explicitly or implicitly---a subproblem
that corresponds to a Helmholtz problem with Robin boundary conditions. 
This route is taken in~\cite{steinbach-windisch11,steinbach13, hiptmair-meury06,hiptmair-meury08, kirsch-monk90,kirsch-monk94,kirsch-monk95,ganesh-morgenstern16}.
It is also the approach taken in our formulation in which, 
once the Robin trace~$m$ is known, the subproblems for the 
solution~$u$ on~$\Omega$ and the exterior
Dirichlet trace~$u^{ext}$ are well-posed. 

Several coupling techniques and approaches for solving full-space
problems such as~\eqref{eq:helmholtz} are available in the literature. 
The coupling techniques mentioned above fall in the class of 
single-trace-formulations (STF). More generally, multi-trace-formulations (MTF)  are also possible.
Here one works with separate approximations~$u$ and~$u^{ext}$ on~$\Omega$ and~$\Omegap$ and enforces continuity 
of both the values and the normal derivatives across~$\partial\Omega$; see~\cite{claeys-hiptmair-jerez-hanckes13, claeys-hiptmair13, claeys-hiptmair12}. 
The coupling methodologies discussed so far are nonoverlapping, in the sense that problems on two disjoint domains~$\Omega$ and~$\Omegap$ are considered and the coupling is performed on the common interface~$\partial\Omega$.
Recent alternatives are overlapping techniques, where for two bounded domains~$\Omega_1 \subset \Omega_2$ with~$\Omega_1^+ \cup \Omega_2 = {\mathbb R}^3$, two approximations~$u_1$ (defined on~$\Omega_1^+$) 
and~$u_2$ (defined on~$\Omega_2$) are sought and coupled by the condition that~$u_1 - u_2$ be small on~$\Omega_1^+ \cap \Omega_2$.
An attractive feature of this coupling approach is the freedom in the choice of~$\Omega_1$ and~$\Omega_2$.
While~$\Omega_2$ may be chosen to accommodate
a convenient FEM discretization, the exterior problem for~$u_1$ 
can be attacked by boundary integral equation techniques and, for smooth~$\partial\Omega_1$, rapidly convergent methods; we refer to~\cite{casati-hiptmair-smajic18,dominguez-ganesh-sayas19} and the references therein.
Another class of methods for~\eqref{eq:helmholtz} is based on the Lippmann-Schwinger equation volume-integral equation; we refer to~\cite{mckay-bruno05} and references therein.
For convex~$\Omega$, methods based on absorption such as the PML method of B\'erenger~\cite{berenger94} can be employed,  and have the attractive feature of being formulated in terms of differential operators, thus avoiding integral operators altogether;
we refer to~\cite{collino-monk98,bramble-pasciak07} and references therein.
A last class of methods worth mentioning is that based on ``infinite elements'' where the discretization of the unbounded domain is realized by nonpolynomial functions;
we mention~\cite{demkowicz-ihlenburg01} and in particular the method based on the so-called  ``pole condition'', \cite{schmidt-deuflhard95,hohage-schmidt-zschiedrich-I,hohage-schmidt-zschiedrich-II,hohage-nannen09,hohage-nannen15}. 

As already underlined, the Galerkin formulation analyzed in the present paper employs three fields, the solution~$u$ in~$\Omega$, the exterior trace~$u^{ext}$ to represent the solution in~$\Omegap$
and the Robin trace~$m = \partial_{\nbold} u + iku$.
For smooth~$\partial\Omega$, we show coercivity of the sesquilinear form up to a compact perturbation so that we are able to show (asymptotic) quasi-optimality of the Galerkin method
under the usual resolution condition that the discretization be sufficiently fine. 
Our FEM-BEM coupling is related to earlier 2D FEM-BEM coupling approaches in~\cite{kirsch-monk90,kirsch-monk94,ganesh-morgenstern16},  where the coupling is also realized through the same Robin trace~$m$.
Differently from our approach, \cite{kirsch-monk90} restricts to circular coupling boundaries so that 
the exterior Impedance-to-Dirichlet operator~${\mathcal P}_{ItD}$ (see Proposition~\ref{proposition:CFIE}) is explicitly  available and the use of the extra variable~$u^{ext}$ is obviated. 
References~\cite{kirsch-monk94,ganesh-morgenstern16} also avoid the explicit use of 
$u^{ext}$ by realizing the operator~${\mathcal P}_{ItD}$ 
by a rapidly convergent Nystr\"om method.  

The novelty of the present approach is in the choice of the coupling variable $m$. Choosing 
it as an impedance trace
leads to a system that has block structure with invertible subblocks for $u$ and $\uext$.
Computationally established FEM or BEM tools could be used for these subproblems. 
Stability (i.e., G{\aa}rding inequality) of the method is inherited from the 
stability assumption~\eqref{assumption:uniqueness} irrespective of the choice of 
coupling boundary $\Gamma$. This flexibility in the choice of $\Gamma$ can be exploited
to facilitate the meshing or the realization of relevant boundary integral operators. 

The present work refrains from a sharp wavenumber-\emph{explicit} theory 
as was done in \cite{melenk-sauter10, melenk-sauter11, MelenkParsaniaSauter_generalDGHelmoltz} for high order FEM, and in \cite{loehndorf-melenk11} 
for high order BEM. 
First steps towards such a goal are achieved in the appendix
by presenting a $k$-explicit G{\aa}rding inequality.
However, a sharp $k$-explicit analysis would require a more 
elaborate regularity theory of various dual problems than what is done in 
Section~\ref{subsection:regularity:dual:problem}.
Then, it would have to follow the path taken in~\cite{loehndorf-melenk11} for scattering problems and in~\cite{melenk-sauter20} for Maxwell's equations.

\medskip
\paragraph*{Notation.}
We employ standard Sobolev spaces as introduced in, e.g.,~\cite{mclean2000strongly}. 
For~$s \in {\mathbb N}_0$ and bounded Lipschitz domains~$D \subset
{\mathbb R}^3$, we define the norms $\|v\|_{s,D}$ for complex-valued
functions~$v$ by $\|v\|^2_{s,D} = \sum_{\alpha\colon|\alpha| \leq s}
\|D^\alpha v\|^2_{0,D}$, and seminorms $|v|^2_{s,D}=\sum_{\alpha\colon|\alpha| = s}
\|D^\alpha v\|^2_{0,D}$.
The Sobolev spaces~$H^s(D)$ are defined as the closure of~$\mathcal C^\infty (D)$
under the norm~$\|\cdot\|_{s,D}$; $H^0(D)=:L^2(D)$. The Sobolev spaces~$H^s_0(D)$  are
defined as the closure of~$\mathcal C^\infty_0(D)$ under this norm.
For positive noninteger~$s$, the spaces~$H^s(D)$ are defined by interpolation between~$H^{\lfloor s\rfloor}(D)$ and~$H^{\lceil s\rceil}(D)$, and the spaces~$H^s_0(D)$ by interpolation between~$H^{\lfloor s\rfloor}_0(D)$ and~$H^{\lceil s\rceil}_0(D)$.
For~$s > 0$, the space~$H^{-s}(D)$ is defined as the dual of~$H^s_0(D)$ with norm 
\[
\|v\|_{-s,D} = 
\sup_{w \in H^s_0(D)} \frac{ |  \langle w, v\rangle | }{\|w\|_{s,D}}.
\]
Here and in the following, $\langle \cdot,\cdot \rangle$ denotes the
duality pairing, which is taken to  to be linear in the first argument and
antilinear in the second argument; thus, it 
coincides with the $L^2(D)$ inner product in case both~$v$ and~$w \in L^2(D)$.
The Sobolev spaces~$H^s(D)$ are Hilbert spaces, and we
write~$(\cdot,\cdot)_{s,D}$ for the corresponding inner
product.

For closed, connected smooth 2-dimensional surfaces $\Gamma \subset {\mathbb R}^3$ and~$s \ge 0$,
we define the Sobolev spaces~$H^s(\Gamma)$ in terms of the eigenfunctions of the Laplace-Beltrami operator.
To this end, let $\{ \varphi_n, \lambda_n  \}_{n \in \mathbb N_0}$ be a sequence of eigenpairs of the Laplace-Beltrami operator on~$\Gamma$, which we assume to be normalized so that they form an $L^2(\Gamma)$-orthonormal basis.
For~$s \ge 0$, we define the norm $\|v\|^2_{s,\Gamma}:= \sum_{n} |v_n|^2 (1+\lambda_n)^s$, 
where $v = \sum_{n} v_n \varphi_n$ is expanded in the $L^2(\Gamma)$-orthogonal basis $\{\varphi_n\}_{n \in {\mathbb N}_0}$.
This norm is equivalent to the one obtained by using local charts as described in ~\cite{mclean2000strongly}; see, e.g., \cite[Sec.~{5.4}]{nedelec01}.
Negative order Sobolev spaces are defined by duality and equipped with the norm
\begin{equation}
\label{eq:dual-Gamma}
\|v\|_{-s,\Gamma} = 
\sup_{w \in H^s(\Gamma)} \frac{| \langle w,v\rangle | }{\|w\|_{s,\Gamma}}.
\end{equation}
Again, $\langle \cdot,\cdot\rangle$ is the duality pairing, with the understanding to coincide with the $L^2(\Gamma)$ inner product if both arguments are in~$L^2(\Gamma)$.
The mapping $v \mapsto (v_n)_{n \in {\mathbb N}_0}$, with $v_n = (v,\varphi_{n})_{L^2(\Gamma)}$, is an isometric isomorphism between~$H^s(\Gamma)$ and the sequence space $\{(v_n)_{n \in {\mathbb N}_0}\,|\, \sum_n |v_n|^2 (1+\lambda_n)^s <\infty\}$.
Likewise, the dual space~$H^{-s}(\Gamma)$ of $H^s(\Gamma)$ can be identified with the sequence space $\{(v_n)_{n \in {\mathbb N}_0}\,|\, \sum_n |v_n|^2 (1 + \lambda_n)^{-s}<\infty\}$,
and the norm~\eqref{eq:dual-Gamma} is given by $\| v\|^2_{-s,\Gamma} = \sum_{n} |v_n|^2(1+\lambda_n)^{-s}$.
When identifying the spaces~$H^s(\Gamma)$ and~$H^{-s}(\Gamma)$ with sequence spaces in this way, the duality pairing~$\langle w,v\rangle$ takes the form
\begin{equation}
\label{eq:duality-pairing-Gamma}
\langle w,v\rangle = \sum_{n \in \mathbb N_0} w_n \overline{v_n}. 
\end{equation}
For $s \in  {\mathbb R}$, the spaces~$H^{s}(\Gamma)$ are Hilbert spaces endowed with the inner product~$(\cdot,\cdot)_{s,\Gamma}$, which takes the form
\begin{equation} \label{negative:Sobolev:sesquilinear:form}
(w,v)_{s ,\Gamma} = \sum_{n \in \mathbb N_0} w_n \overline{v_n} (1+\lambda_n)^{s},
\end{equation}
when the elements of~$H^{s}(\Gamma)$ are characterized by sequences through the above mentioned isometric isomorphisms. 
The seminorm~$|\cdot|_{\frac12,\Gamma}$
in~$H^{\frac12}(\Gamma)$ is defined by factoring out constant functions:
$|v|_{\frac12,\Gamma} =\inf_{c \in  {\mathbb C}}  \|v -
  c\|_{\frac12,\Gamma}$.

For bounded linear operators $K:H^s(\Gamma) \rightarrow
H^{s'}(\Gamma)$, we will require the adjoint $K^\ast:H^{-s'}(\Gamma) \rightarrow H^{-s}(\Gamma)$
defined by $\langle K^\ast w,v\rangle = \langle w,Kv\rangle$, where
the  duality pairings are between the appropriate spaces. By taging
the conjugate, this also
implies $\langle Kv,w\rangle = \langle v , K^\ast w\rangle$.

Given two positive quantities~$a$ and~$b$, we write~$a \lesssim b$
and~$a \gtrsim b$ whenever there exists a positive constant~$c$
such that~$a \le c \, b $ and~$a \ge c\,b $, respectively (the dependence of the constant is specified at each occurrence).

\paragraph*{Outline of the paper.}
In Section~\ref{section:problem}, we introduce the problem we are interested in,
and we recall basic tools from the theory of boundary integral operators.
This problem is formulated in variational formulation in Section~\ref{section:continuous:mortar:coupling}.
Here, the auxiliary mortar variable is introduced.
The well-posedness and the regularity of the solution to the dual problem, including the proof of a G\r arding inequality, are investigated as well.
Section~\ref{section:FEM-BEM:mortar} is devoted to the construction
of a FEM-BEM mortar coupling for the discretization of the continuous problem.
Quasi-optimality of the Galerkin method is established under a
resolution condition on the approximation spaces.
Numerical results and comments on the implementation of the method are
presented in Section~\ref{section:numerical:results}.
Finally, conclusions are drawn in
Section~\ref{section:conclusions}. In the appendix, we present wavenumber-explicit
continuity estimates and a G{\aa}rding inequality for the case of an analytic coupling surface $\Gamma$.

\section{The continuous problem and the functional setting} \label{section:problem}
In the present section we introduce the target problem, the functional setting, some notation, and recall useful definitions and properties 
of boundary integral potentials and operators.

Given~$\Omega$ a bounded domain in~$\mathbb R^3$, we define $\Omegap:= \mathbb R^3 \setminus \overline \Omega$ and $\Gamma := \overline \Omega \cap \overline{\Omegap}$, and we associate with~$\Gamma$ the normal vector~$\nGamma$ pointing outwards~$\Omega$.
Let~$\Ad=\Ad(\xbold)$ be a diffusion parameter in~$C^{\infty}(\mathbb R^3)$ such that~$0 < \alpha_* \le \Ad \le \alpha^*$ in~$\mathbb R^3$, for two constants~$\alpha_*$ and~$\alpha^*$.
The forthcoming analysis extends also to the case of a positive definite diffusion tensor~$\Ad$. For the sake of exposition, we stick to the scalar case.
Consider the wavenumber function~$\k(\xbold)$, where~$k \in \mathbb
R$, $k\ge k_0>0$, denotes the angular frequency, and the
function~$n\in L^{\infty}(\mathbb R^3, \mathbb C)$ such that $\vert n(\xbold) \vert \ge c_0 > 0$ denotes the material refraction index.
We assume that
\begin{equation}
\label{eq:assumption-on-a} 
\text{$\Ad \equiv 1$ and $n\equiv 1$ a.e.\ on~$\Omegap \cup \mathcal N(\Gamma)$ and $\mathcal N(\Gamma) = $ an open neighborhood of~$\Gamma$.}
\end{equation}
For source terms of the form 
\[
\widetilde f (\xbold)= \begin{cases}
\f (\xbold) & \text{in } \Omega,\\
0 & \text{elsewhere,}
\end{cases}
\]
with $f\in L^2(\Omega)$ with compact support,
we consider the three dimensional Helmholtz problem: find $u:\mathbb
R^3 \rightarrow \mathbb C$ such that
\begin{equation} \label{Helmholtz:problem:on:unbounded:domain}
\begin{cases}
-\div(\Ad \nabla  u) -(\k)^2 u=\widetilde  \f \quad \quad  \text{in } \mathbb R^3, \\
\displaystyle{\lim_{\vert \xbold \vert\rightarrow +\infty}}\vert \xbold
  \vert\left( \partial_{\vert\xbold\vert} u  - i k u   \right) =0,
\end{cases}
\end{equation}
which can be also rewritten as a transmission problem.
To this end, we introduce the jumps of functions and of their normal derivatives across the interface~$\Gamma$.
Given $\varphi \in H^1(\mathbb R^3\setminus \Gamma)$, we denote by~$\gammazint(\varphi)$ and~$\gammazext(\varphi)$ the Dirichlet traces over~$\Gamma$ of the restrictions of~$\varphi$ over~$\Omega$ and~$\Omegap$, respectively,
whereas, we denote by~~$\gammaoint(\varphi)$ and~$\gammaoext(\varphi)$ the Neumann traces over~$\Gamma$ of the restrictions of~$\varphi$ to~$\Omega$ and~$\Omegap$, respectively.
For any $v \in H^1(\mathbb R^3 \setminus \Gamma)$, we
set
\[
  \llbracket v \rrbracket_\Gamma := 
  \gammazint (v) - \gammazext (v),\quad
  \llbracket \partial_{\nGamma} v \rrbracket_\Gamma :=
  \gammaoint(v) - \gammaoext (v).
\]

The transmission problem equivalent to~\eqref{Helmholtz:problem:on:unbounded:domain} consists in finding $u:\mathbb R^3\rightarrow \mathbb C$ such that
\begin{equation}\label{Helmholtz:transmission:problem}
\begin{cases}
-\div(\Ad  \nabla u) -(\k)^2 u= \f \quad\text{in } \Omega, \\
-\Delta u  - k^2 
u= 0 \quad\text{in } \Omegap, \\
\llbracket u \rrbracket_\Gamma =0,\quad \llbracket \partial_{\nGamma} u
\rrbracket_\Gamma =0, \\
\displaystyle{\lim_{\vert \xbold \vert\rightarrow +\infty}}\vert \xbold
  \vert\left( \partial_{\vert\xbold\vert}  u  - i k u   \right) =0.
\end{cases}
\end{equation}
In Section~\ref{section:continuous:mortar:coupling}, we introduce an additional formulation based on adding a mortar variable, which is investigated numerically in Section~\ref{section:FEM-BEM:mortar}.
The remainder of this section is devoted to recall definitions and tools stemming from the theory of boundary integral operators.
\medskip

We assume henceforth the following uniqueness assertion:
\begin{equation} \label{assumption:uniqueness}
\begin{cases}
-\div( \Ad \nabla u) -(\k)^2 u= 0 \quad\text{in } \Omega \cup \Omegap, \\
\llbracket u \rrbracket_\Gamma =0,\quad \llbracket \partial_{\nGamma} u
\rrbracket_\Gamma =0, \\
\displaystyle{\lim_{\vert \xbold \vert\rightarrow +\infty}}\vert \xbold
  \vert\left( \partial_{\vert\xbold\vert}  u  - i k u   \right) =0
\end{cases}
\quad\quad\quad \text{implies} \quad\quad\quad u=0.
\end{equation}
Note that this assumption corresponds to the uniqueness of the solution to problem~\eqref{Helmholtz:transmission:problem}, and is used in the following to apply the Fredholm theory.

\begin{remark}
It is well-known that the uniqueness assertion~\eqref{assumption:uniqueness} holds true for~$\Ad=1$ and~$n=1$.
Indeed, in this case a solution~$u$ is smooth (in fact, analytic) and then it follows from~\cite[(2.10)]{colton-kress98} (by letting the set $D$ shrink to a point there) that~$u \equiv 0$.
The solution is then given by the Newton potential~$u = \Ntildek \f$ defined in~\eqref{Newton:potential}.
For Helmholtz problems with variable coefficients we refer to the recent  works~\cite{burq02, GPS_Helmholtz_het_wellposedness, MoiolaSpence_acoustictrans_wavenumberexplicit, grahamSauter_Helmholtz, chaumont2018wavenumber}
and the references therein for discussions regarding uniqueness as well as the dependence of the solution operator on the wavenumber~$k$.
\eremk
\end{remark}

\subsection{Boundary integral operators for the 3D Helmholtz problem in a nutshell} \label{subsection:BIO:for:Helmholtz}
In this section, we define integral operators for functions on~$\Gamma$, and recall some of their properties. As we assume that the refraction index~$n \equiv 1$ 
in a neighborhood of~$\Gamma$, the wavenumber that
enters these definitions is simply~$k$.

Given
\[
G_k(\xbold, \ybold) = \frac{e^{ik\vert \xbold-\ybold \vert}}{4\pi \vert \xbold - \ybold \vert}
\]
the fundamental solution to the 3D Helmholtz problem, we define the single and double layer potentials by 
\begin{align}
&  \Vtildek \varphi (\xbold) = \int_\Gamma G_k (\xbold - \ybold) \varphi(\ybold) ds(\ybold)  \quad \forall \xbold \in \mathbb R^3 \setminus \Gamma,\, \forall \varphi \in H^{-\frac{1}{2}}(\Gamma), \label{single:layer:potential}\\
&  \Ktildek \varphi (\xbold) = \int_\Gamma \partial_{\nGamma(\ybold)}G_k (\xbold - \ybold) \varphi(\ybold) ds(\ybold)  \quad \forall \xbold \in \mathbb R^3 \setminus \Gamma,\, \forall \varphi \in H^{\frac{1}{2}}(\Gamma). \label{double:layer:potential}
\end{align} 
These two potentials satisfy the Helmholtz equation with wavenumber~$k$ in~$\Omega \cup \Omegap$.
We also define the Newton potential by 
\begin{equation} \label{Newton:potential}
\Ntildek g (\xbold) := \int_\Omega G_k(\xbold - \ybold) g(\ybold)
d\ybold \quad \forall \xbold \in {\mathbb R}^3,\, \forall g \in L^2(\Omega).
\end{equation}
Starting from~\eqref{single:layer:potential} and~\eqref{double:layer:potential}, we define the four boundary integral operators. 
The properties of such operators are detailed in several textbooks and papers; we refer here for instance to~\cite{mclean2000strongly, SauterSchwab_BEMbook, steinbach_BEMbook, costabel}.
\medskip

Firstly, we introduce the single layer operator $\Vk : H^{-\frac{1}{2}}(\Gamma) \rightarrow H^{\frac{1}{2}}(\Gamma)$:
\begin{equation} \label{single layer operator}
\Vk \varphi := \gammazint (\Vtildek \varphi) \quad \forall \varphi \in H^{-\frac{1}{2}}(\Gamma).
\end{equation}
This operator extends to an operator~$\Vk : H^{-1+s}(\Gamma) \rightarrow H^{s}(\Gamma)$ for all~$s\in[0,1]$ on Lipschitz boundaries~$\Gamma$.
Besides, $\Vtildek$ satisfies the jump relation
\begin{equation}
\label{eq:jump-rel-V-1}
\gammazext(\Vtildek \varphi) = \gammazint(\Vtildek \varphi) \quad \forall \varphi \in H^{-\frac{1}{2}}(\Gamma) \quad \text{and} \quad \llbracket \Vtildek \varphi \rrbracket_\Gamma =0 \quad \forall \varphi \in H^{-\frac{1}{2}}(\Gamma).
\end{equation}
Next, we define the double layer operator $\Kk: H^{\frac{1}{2}}(\Gamma) \rightarrow H^{\frac{1}{2}}(\Gamma)$:
\begin{equation} \label{double:layer:operator}
\left( -\frac{1}{2} + \Kk  \right) \varphi := \gammazint ( \Ktildek \varphi) \quad \forall \varphi \in H^{\frac{1}{2}}(\Gamma).
\end{equation}
This operator extends to an operator~$\Kk: H^s(\Gamma) \rightarrow H^s(\Gamma)$ for all~$s \in [0,1]$ on Lipschitz boundaries~$\Gamma$.
Moreover, the following jump condition holds true:
\begin{equation}
\label{eq:jump-rel-K-1}
\llbracket \Ktildek \varphi \rrbracket_\Gamma = -\varphi \quad \forall \varphi \in H^{\frac{1}{2}}(\Gamma) \quad \text{and} \quad \gammazext (\Ktildek \varphi) = \left( \frac{1}{2} + \Kk  \right) \varphi \quad \forall \varphi \in H^{\frac{1}{2}}(\Gamma).
\end{equation}
The so-called adjoint double layer operator $\Kprimek: H^{-\frac{1}{2}}(\Gamma) \rightarrow H^{-\frac{1}{2}}(\Gamma)$ is specified by
\begin{equation} \label{adjoint:double:layer:operator}
\left( \frac{1}{2} + \Kprimek  \right) \varphi := \gammaoint ( \Vtildek \varphi) \quad \forall \varphi \in H^{-\frac{1}{2}}(\Gamma).
\end{equation}
This operator extends to an operator~$\Kprimek: H^{-s}(\Gamma) \rightarrow H^{-s}(\Gamma)$ for all~$s \in [0,1]$ on Lipschitz boundaries~$\Gamma$.
Moreover, the following jump condition holds true
\begin{equation}
\label{eq:jump-rel-V-2}
\llbracket \partial_{\nGamma} \Vtildek \varphi \rrbracket_\Gamma = \varphi \quad \forall \varphi \in H^{-\frac{1}{2}}(\Gamma) \quad \text{and} \quad \gammaoext (\Vtildek \varphi) = \left( -\frac{1}{2} + \Kprimek  \right) \varphi \quad \forall \varphi \in H^{-\frac{1}{2}}(\Gamma).
\end{equation}
The hypersingular boundary integral operator $\Wk: H^{\frac{1}{2}}(\Gamma) \rightarrow H^{-\frac{1}{2}}(\Gamma)$ is given by
\begin{equation} \label{hypersingular:operator}
-\Wk \varphi := \gammaoint(\Ktildek \varphi) \quad \forall \varphi \in H^{\frac{1}{2}}(\Gamma).
\end{equation}
This operator extends to an operator~$\Wk: H^{s}(\Gamma) \rightarrow H^{-1+s}(\Gamma)$ for all~$s \in [0,1]$ on Lipschitz boundaries~$\Gamma$.
The following jump condition holds true:
\begin{equation}
\label{eq:jump-rel-K-2}
\llbracket \partial_{\nGamma} \Ktildek \varphi \rrbracket_\Gamma = 0 \quad \forall \varphi \in H^{\frac{1}{2}}(\Gamma) \quad \text{and} \quad \gammaoext (\Ktildek \varphi) = -\Wk \varphi \quad \forall \varphi \in H^{-\frac{1}{2}}(\Gamma).
\end{equation}
Additionally, we highlight a feature of the single layer and the hypersingular operators for zero wavenumber, namely, 
there exist constants
$c_1,c_2,c_3,c_4>0$
depending on $\Omega$ such that 
\begin{equation}
\label{coercivity:operators:Laplace}
\begin{split}
c_1 \|\varphi\|^2_{-\frac{1}{2},\Gamma} &\leq \langle \varphi , \Vz \varphi \rangle \leq c_2 \Vert \varphi \Vert_{-\frac{1}{2}, \Gamma}^2 
\quad \forall \varphi \in H^{-\frac{1}{2}} (\Gamma), \\
c_3|\psi|^2_{\frac{1}{2},\Gamma} & \leq 
\langle \Wz \psi , \psi \rangle \leq
c_4\vert \psi \vert_{\frac{1}{2}, \Gamma}^2 \quad \forall \psi
\in H^{\frac{1}{2}} (\Gamma)/{\mathbb C}.
\end{split}
\end{equation}
Moreover, 
\begin{equation}
\label{eq:adjoint-laplace}
\Vz^* = \Vz, 
\qquad \Kz^* = \Kz^\prime, 
\qquad \Wz^* = \Wz.
\end{equation}
Assuming sufficient smoothness of the interface~$\Gamma$, the
  range of the parameter~$s$ in the mapping properties of the four
  boundary operators that are recalled after formulas~\eqref{single layer operator},
  \eqref{double:layer:operator},
  \eqref{adjoint:double:layer:operator},
  and~\eqref{hypersingular:operator}, can be arbitrarily enlarged.

\begin{prop} \label{proposition:properties:compact:operators}
Let $\Gamma$ be $C^\infty$ and let~$\Vk$, $\Kk$, $\Kprimek$, and~$\Wk$ be defined in~\eqref{single
  layer operator}, \eqref{double:layer:operator},
\eqref{adjoint:double:layer:operator},
and~\eqref{hypersingular:operator}, respectively.
Then, for all~$s \in \mathbb R$, the following maps are bounded linear operators: 
\begin{equation} \label{eq:smoothGamma}
\begin{split}
& \Vk: H^{-1+s}(\Gamma) \rightarrow H^{s}(\Gamma), \quad \quad
\Kk: H^{s}(\Gamma) \rightarrow H^{s}( \Gamma),\\
& \Kprimek: H^{-s}(\Gamma) \rightarrow H^{-s}(\Gamma), \quad \quad\quad
\Wk: H^{s}(\Gamma) \rightarrow H^{-1+s}( \Gamma).\\
\end{split}
\end{equation}
Moreover, for $s \ge 0$, the operators
\begin{equation} \label{compact:part}
\begin{split}
& \Vk-\Vz: H^{-\frac{1}{2}+s}(\Gamma) \rightarrow H^{\frac{5}{2}+s} (\Gamma), \quad \quad \Kk - \Kz: H^{\frac{1}{2}+ s}(\Gamma) \rightarrow H^{\frac{5}{2} +s}( \Gamma),\\
& \Kprimek-\Kprimez: H^{-\frac{1}{2}+s}(\Gamma) \rightarrow H^{\frac{3}{2}+s} (\Gamma), \quad \quad \Wk - \Wz: H^{\frac{1}{2}+ s}(\Gamma) \rightarrow H^{\frac{3}{2} +s}( \Gamma),
\end{split}
\end{equation}
are bounded linear operators.
\end{prop}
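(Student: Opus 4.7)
The plan is to invoke classical pseudodifferential calculus on the smooth compact surface~$\Gamma$. The four boundary integral operators~$\Vk, \Kk, \Kprimek, \Wk$ are pseudodifferential operators on~$\Gamma$ of orders~$-1,-1,-1,+1$ respectively, with principal symbols determined by the diagonal singularity~$1/(4\pi|x-y|)$ of the Helmholtz Green's function~$G_k$. Since this leading-order singularity is independent of~$k$, the orders and hence the mapping properties coincide with those of the corresponding Laplace boundary operators. The statements in~\eqref{eq:smoothGamma} for arbitrary~$s\in\mathbb R$ then follow from the standard continuity of pseudodifferential operators of order~$m$ as maps~$H^t(\Gamma)\to H^{t-m}(\Gamma)$ on smooth closed manifolds; references like~\cite{SauterSchwab_BEMbook,nedelec01} carry out this program in detail for all four operators simultaneously.

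For the additional regularity claimed in~\eqref{compact:part}, the Taylor expansion
\[
G_k(x,y)-G_0(x,y)=\frac{e^{ik|x-y|}-1}{4\pi|x-y|}=\frac{ik}{4\pi}-\frac{k^2|x-y|}{8\pi}-\frac{ik^3|x-y|^2}{24\pi}+\cdots
\]
shows that the diagonal singularity~$1/|x-y|$ is entirely removed in the difference. The constant leading term contributes a rank-one integral operator (smoothing of infinite order), while the next term, proportional to~$|x-y|$, yields the principal contribution to the difference. On the two-dimensional smooth surface~$\Gamma$, the model operator with kernel~$|x-y|$ has pseudodifferential order~$-3$, which is two orders below that of~$\Vk$ itself, thus matching the gain of two Sobolev orders in the mapping $H^{-\frac12+s}(\Gamma)\to H^{\frac52+s}(\Gamma)$. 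The analogous analysis, applied to the differentiated kernels $\partial_{\nGamma(y)}(G_k-G_0)$, $\partial_{\nGamma(x)}(G_k-G_0)$, and $\partial_{\nGamma(x)}\partial_{\nGamma(y)}(G_k-G_0)$, produces the stated gains for $\Kk-\Kz$, $\Kprimek-\Kprimez$, and $\Wk-\Wz$ respectively, since differentiation in~$x$ and~$y$ shifts the order by one in each variable but preserves the cancellation of the principal symbol at the diagonal.

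The main obstacle is the precise quantification of the pseudodifferential order associated with the Lipschitz kernel~$|x-y|$ restricted to~$\Gamma\times\Gamma$. Although~$G_k-G_0$ is strictly smoother than either~$G_k$ or~$G_0$, it is not~$C^\infty$ across the diagonal: the odd-power terms~$|x-y|^{2j+1}$ have only finite transverse smoothness. I would handle this with a cut-off decomposition — away from the diagonal the kernel is real-analytic (both~$G_k$ and~$G_0$ are), while near the diagonal one passes to local coordinates on~$\Gamma$ and analyses the model kernels~$|x-y|^{n-1}$ term by term, extracting two orders of gain from the cancellation of the~$n=0$ singular term. As an alternative route bypassing the pseudodifferential machinery, one can combine the Lipschitz-boundary mapping properties recalled after~\eqref{single layer operator}--\eqref{hypersingular:operator} with interpolation and duality, and then bootstrap the gain by moving the normal derivatives between the two kernel arguments through integration by parts on~$\Gamma$.
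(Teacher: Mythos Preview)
Your pseudodifferential kernel-expansion approach is sound and is a genuinely different route from the paper's. The paper argues via potential theory and elliptic regularity: for $\varphi \in H^{-\frac12+s}(\Gamma)$ the difference $u := \Vtildek\varphi - \Vtilde_0\varphi$ satisfies
\[
-\Delta u - k^2 u = -k^2\,\Vtilde_0\varphi \quad\text{in }\mathbb{R}^3\setminus\Gamma,
\qquad \llbracket u\rrbracket_\Gamma = 0, \qquad \llbracket\partial_{\nGamma} u\rrbracket_\Gamma = 0,
\]
and since $\Vtilde_0\varphi \in H^{1+s}$ on $\Omega$ and on $\Omega^+\cap B_R(0)$, smooth-interface elliptic regularity for this transmission problem gives $u\in H^{3+s}$ there; taking Dirichlet and Neumann traces on $\Gamma$ yields the mapping properties for $\Vk-\Vz$ and $\Kprimek-\Kprimez$, and the analogous argument with $\Ktildek\psi-\Ktildez\psi$ handles $\Kk-\Kz$ and $\Wk-\Wz$. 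This sidesteps the obstacle you flag entirely: no symbol calculus for the conormal kernel $|x-y|$ is needed, only the volume mapping properties of the layer potentials plus interior regularity. Your route has the merit of making the two-order gain visible at the symbol level, but the execution is heavier (the odd powers $|x-y|^{2j+1}$ are conormal rather than classical symbols, and the full series, not just its leading term, must be controlled). The paper's argument is shorter and, as Remark~\ref{remark:nonsmooth-interface} shows, degrades gracefully to Lipschitz~$\Gamma$.

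Two minor cautions on your sketch. The heuristic ``each normal derivative shifts the order by one'' is not literally correct on a curved $\Gamma$, since $(x-y)\cdot\nGamma(y)=O(|x-y|^2)$ there---this is exactly why $\Kz$ itself is already of order $-1$---though the final orders you reach do match the proposition. And your closing ``alternative route'' via integration by parts on $\Gamma$ does not work as stated: the normal derivatives appearing in the kernels of $\Kk$, $\Kprimek$, $\Wk$ are transverse to $\Gamma$ and cannot be transferred by surface integration by parts.
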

\begin{proof}
For the proof of~\eqref{eq:smoothGamma}, we refer to \cite[Thm.~{7.2}]{mclean2000strongly}.
The enhanced shift properties of the differences in~\eqref{compact:part} as compared to the individual terms expresses a compactness property,
which is well-known; see, e.g., 
\cite{steinbach_BEMbook, SauterSchwab_BEMbook, hiptmair-meury06, BuffaHiptmair2005regularized}.
For analytic $\Gamma$, the mapping properties of~\eqref{compact:part} 
could be extracted from 
the potential estimates in
\cite[Thms.~{5.3}--{5.4}]{melenk2012mapping} by taking appropriate
traces; cf.~Appendix~\ref{appendix:garding}. 
In the interest of readability and to be able to connect with
Remark~\ref{remark:nonsmooth-interface} below, 
we sketch the argument.  For $\varphi \in H^{-1/2+s}(\Gamma)$, the function 
$u:= \Vtildek \varphi - \widetilde{\mathcal V}_0 \varphi$ satisfies 
\begin{equation}
\label{eq:proof-mapping-properties}
-\Delta u - k^2 u = - k^2 \widetilde {\mathcal V}_0 \varphi \quad \mbox{ in ${\mathbb R}^3\setminus \Gamma$}, 
\qquad \llbracket{u}\rrbracket_\Gamma = 0, \quad 
\qquad \llbracket{\partial_{\nGamma} u} \rrbracket_\Gamma = 0. \quad 
\end{equation}
We note that $\widetilde{\mathcal V}_0  \varphi \in H^{1+s}(\Omega) \cap H^{1+s}(\Omega^+\cap B_R(0))$ (for a sufficiently large ball $B_R(0)$) 
by the mapping properties of $\widetilde{\mathcal V}_0$, \cite[Thm.~{6.13}]{mclean2000strongly}. Since the interface is smooth, elliptic 
regularity for transmission problems gives $u \in H^{s+3}(\Omega) \cap H^{s+3}(\Omega^+ \cap B_R(0))$. Taking the trace and the conormal
derivative on $\Gamma$ proves the mapping properties for $\Vk - \Vz$ and $\Kprimek-\Kprimez$. The mapping properties
of $\Kk - \Kz$ and $\Wk-\Wz$ are seen similarly by studying the potential 
$u:= \Ktildek \psi - \widetilde{\mathcal K}_0  \psi$ for $\psi \in H^{1/2+s}(\Gamma)$.
\end{proof}
 
\begin{remark} \label{remark:dependence:constant:on:k}
The continuity constants of the mappings in~\eqref{eq:smoothGamma} and~\eqref{compact:part} depend on the wavenumber~$k$.
Some wavenumber-explicit control of the constants in~\eqref{compact:part} 
is possible using the refined $k$-explicit regularity given in \cite[Thms.~{5.1}--{5.4}]{melenk2012mapping}; cf.~also Appendix~\ref{appendix:garding}.
While this kind of regularity is a key ingredient of a $k$-explicit analysis of high order method, a sharp $k$-explicit analysis 
as done for acoustic scattering problems in \cite{loehndorf-melenk11} 
or for Maxwell problems in \cite{melenk-sauter20} is beyond the scope of the present work as it requires 
a much more elaborate analysis of various dual problems than what is done in Section~\ref{subsection:regularity:dual:problem}. 

Some $k$-explicit bounds for integral operators are available in the literature: 
besides \cite{melenk2012mapping}, we mention the bounds for the operators $\Vk$, $\Kk$, $\Kprimek$, $\Wk$ 
in \cite[Sec.~{1.2.3}]{graham-loehndorf-melenk-spence15}, \cite[Thm.~{6.4}]{GalkowskiPhDthesis} 
and \cite[Sec.~5]{actaBEMhelmholtz}, \cite{baskin-spence-wunsch16}
for the combined field operators.
\eremk
\end{remark}
\medskip

Let~$u$ be a (near $\Gamma$ sufficiently regular) solution to the 
Helmholtz equation $-\Delta u - k^2 u = 0$ in $\Omega^+$
satisfying the 
Sommerfeld radiation condition at infinity. Then, we have the following 
representation~\cite{mclean2000strongly,SauterSchwab_BEMbook}:
\[
u(\xbold) = -\Vtildek \partial_\n u (\xbold) + \Ktildek u (\xbold) \quad \forall \xbold \in \Omegap.
\]
Taking the trace and the trace of the normal derivative yields the following two equations, known
as the 
exterior \Calderon\,system for homogeneous Helmholtz problems with solution~$u$:
\begin{equation} \label{exterior:Calderon:system:Helmholtz}
\begin{cases}
\gammazext u =  \left( \frac{1}{2} + \Kk  \right) (\gammazext u) - \Vk (\gammaoext u) \\
\gammaoext u = -\Wk (\gammazext u) + \left( \frac{1}{2} - \Kprimek  \right) (\gammaoext u).\\
\end{cases}
\end{equation}

\section{Mortar coupling and analysis} \label{section:continuous:mortar:coupling}
The aim of the present section is to rewrite the transmission problem~\eqref{Helmholtz:transmission:problem} by adding an additional ``intermediate'' unknown called \emph{mortar variable}, which is the impedance trace of the solution on the interface~$\Gamma$.
This new formulation is analyzed in the remainder of the section, and is the target of the numerical analysis of Section~\ref{section:FEM-BEM:mortar}.

The problem with mortar coupling we are interested in reads as follows: Find the functions $u: \Omega\rightarrow \mathbb C$, $\m: \Gamma\rightarrow \mathbb C$, and $\uext: \Gamma\rightarrow \mathbb C$ such that 
\begin{align} 
& \begin{cases}
-\div(\Ad \nabla  u) - (\k)^2 u = \f & \text{in } \Omega,\\
\partial _{\nGamma} u + i k  u - \m =0 & \text{on } \Gamma,\\
\end{cases} \label{first:system:Helmholtz}\\
& \begin{cases}
\uext = \PItD \m & \text{on } \Gamma
\end{cases},\label{second:system:Helmholtz}\\
& \begin{cases}
u - \left[ \left( \frac{1}{2} + \Kk  \right) \uext - \Vk (\m - i k \uext)   \right] = 0 & \text{on } \Gamma.
\end{cases} \label{third:system:Helmholtz}
\end{align}
In the boundary conditions in problem~\eqref{first:system:Helmholtz}, the term~$\partial_{\nGamma}$ is indeed equal to~$\Ad\, \partial_{\nGamma}$, as we have assumed that~$\Ad$ is equal to~$1$ in a neighborhood of the interface~$\Gamma$.

The operator $\PItD: H^{-\frac{1}{2}}(\Gamma) \rightarrow H^{\frac{1}{2}}(\Gamma)$ appearing in~\eqref{second:system:Helmholtz} maps the impedance mortar variable~$\m$ to the Dirichlet trace~$\uext$ of the solution to the exterior problem.
A description of such operator can be found in~\cite[pag. 124--126]{actaBEMhelmholtz}. 

Equation~\eqref{first:system:Helmholtz} represents the problem in the interior domain~$\Omega$, i.e., a Helmholtz problem endowed with impedance boundary condition provided by the mortar variable.
On the other hand, equation~\eqref{second:system:Helmholtz} relates the Dirichlet trace of the solution in the exterior domain~$\Omegap$ with the mortar variable.
Finally, equation~\eqref{third:system:Helmholtz} couples the three unknowns altogether, i.e., connects the mortar variable~$\m$ with the Dirichlet traces of~$u$ and~$\uext$, the solutions in the interior and exterior domains, respectively.

Equations~\eqref{second:system:Helmholtz} and~\eqref{third:system:Helmholtz} follow from the exterior \Calderon\,system~\eqref{exterior:Calderon:system:Helmholtz}
and the coupling condition~$\llbracket u \rrbracket_\Gamma =0$ in~\eqref{Helmholtz:transmission:problem}.

\begin{remark} \label{remark:transmission:continuous}
The mortar formulation~\eqref{first:system:Helmholtz}-\eqref{second:system:Helmholtz}-\eqref{third:system:Helmholtz} is equivalent to a transmission formulation.
This can be easily seen by inverting the operator~$\PItD$ formally, solving~\eqref{second:system:Helmholtz} in terms of~$\m$ as
\[
\m = \PItD^{-1} \uext,
\]
and inserting~$\m$ in~\eqref{first:system:Helmholtz} and~\eqref{third:system:Helmholtz}.
The counterpart of this in the discrete setting is discussed in Remark~\ref{remark:transmission:discrete}.
\eremk
\end{remark}

For ease of reading, we recall and prove the following equivalent formulation of~\eqref{second:system:Helmholtz}.
\begin{prop} \label{proposition:CFIE}
Equation~\eqref{second:system:Helmholtz} is equivalent to
\begin{equation} \label{formula:Chandler-Wilde}
\Bk  \uext + i k \Aprimek ( \uext) -   \Aprimek \m = 0,
\end{equation}
where
\begin{equation} \label{definition:Bk:and:Akprime}
\Bk := -\Wk - i k \left( \frac{1}{2} - \Kk \right),\quad \Aprimek := \frac{1}{2} + \Kprimek + i k \Vk,
\end{equation}
are combined integral operators.
\end{prop}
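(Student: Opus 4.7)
The plan is to prove the two implications of the equivalence separately. For the direct implication, assume $\uext = \PItD \m$: by definition of the impedance-to-Dirichlet operator there is a function $u$ in $\Omegap$ solving the homogeneous Helmholtz equation with wavenumber $k$ and the Sommerfeld radiation condition, with $\gammazext u = \uext$ and $\gammaoext u = \m - ik\uext$. Substituting these traces into the two equations of the exterior Calder\'on system~\eqref{exterior:Calderon:system:Helmholtz} and rearranging yields
\begin{align*}
(\tfrac{1}{2} - \Kk)\uext - ik\Vk\uext + \Vk \m &= 0,\\
\Wk\uext - ik(\tfrac{1}{2} + \Kprimek)\uext + (\tfrac{1}{2}+\Kprimek)\m &= 0.
\end{align*}
Multiplying the first equation by $-ik$ and subtracting the second produces, after grouping terms according to the definitions~\eqref{definition:Bk:and:Akprime}, exactly $\Bk\uext + ik\Aprimek \uext - \Aprimek \m = 0$, which is~\eqref{formula:Chandler-Wilde}.

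For the converse implication, assume $(\uext, \m)$ satisfies~\eqref{formula:Chandler-Wilde}, and set $v := \PItD \m$. Applying the direct implication to $(v,\m)$ gives $\Bk v + ik\Aprimek v - \Aprimek \m = 0$; subtracting from~\eqref{formula:Chandler-Wilde} yields $(\Bk + ik\Aprimek)(\uext - v) = 0$, so it suffices to prove the injectivity of $\Bk + ik\Aprimek$ on $H^{1/2}(\Gamma)$.

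To this end, let $\phi \in H^{1/2}(\Gamma)$ satisfy $(\Bk + ik\Aprimek)\phi = 0$, and consider the indirect potential $w := \Ktildek \phi + ik \Vtildek \phi$, which solves the homogeneous Helmholtz equation in $\mathbb R^3 \setminus \Gamma$ with the Sommerfeld radiation condition. A short computation of its exterior traces, using the jump relations~\eqref{eq:jump-rel-K-1}--\eqref{eq:jump-rel-K-2} and the definitions of $\Bk$ and $\Aprimek$, identifies the exterior impedance trace $\gammaoext w + ik \gammazext w$ with $(\Bk + ik\Aprimek)\phi = 0$; well-posedness of the exterior impedance problem in the constant-coefficient exterior region then forces $w \equiv 0$ in $\Omegap$. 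The jump relations~\eqref{eq:jump-rel-K-1} and~\eqref{eq:jump-rel-V-2} subsequently give $\gammazint w = -\phi$ and $\gammaoint w = ik\phi$, so $w|_\Omega$ solves the interior homogeneous Helmholtz problem with vanishing Robin datum $\gammaoint w + ik \gammazint w = 0$. Since the interior Robin problem is unconditionally well-posed for $k>0$, we conclude $w|_\Omega = 0$ and therefore $\phi = 0$. The main obstacle I anticipate is tracking the many $ik$ factors and signs consistently through the Calder\'on combination and the jump relations; the injectivity step mirrors the classical CFIE argument used to avoid spurious resonances but must be carried out by hand, since $\Bk + ik\Aprimek$ is not a standard Brakhage--Werner operator.
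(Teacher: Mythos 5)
Your direct implication is the same computation as the paper's: substitute the relations $\gammazext u = \uext$ and $\gammaoext u = \m - ik\uext$ into the exterior Calder\'on system, multiply the first equation by $\pm ik$, and combine; the signs and grouping you report are consistent (your two Calder\'on equations are simply the negatives of the paper's~\eqref{rewriting:exterior:Calderon:Helmholtz:part:2}), so this direction checks out.

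For the converse you take a genuinely different route. The paper concludes by citing~\cite[Thm.~2.27]{actaBEMhelmholtz} for invertibility of the combined operator (and in fact writes $\Bk + i\Aprimek$ where $\Bk + ik\Aprimek$ is what actually appears in~\eqref{formula:Chandler-Wilde}, a slip that your argument sidesteps). You instead reduce the converse to injectivity of $\Bk + ik\Aprimek$ and prove it by a clean, self-contained potential-theoretic argument: the key observation is that $\Bk + ik\Aprimek$ is precisely the exterior combined impedance trace $\gammaoext + ik\gammazext$ of the Brakhage--Werner-type potential $w = \Ktildek\phi + ik\Vtildek\phi$, which one verifies from~\eqref{eq:jump-rel-V-1}--\eqref{eq:jump-rel-K-2}; uniqueness for the exterior impedance problem (the very problem defining $\PItD$) kills $w$ in $\Omegap$, and then the jump relations give $\gammazint w = -\phi$, $\gammaoint w = ik\phi$, so the interior trace $\gammaoint w + ik\gammazint w = 0$ and uniqueness of the interior Robin problem forces $w|_\Omega = 0$, hence $\phi = 0$. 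I verified the trace identity $\gammaoext w + ik\gammazext w = (\Bk + ik\Aprimek)\phi$ and your jump bookkeeping; both are correct. Your route buys self-containment and makes transparent exactly which operator is being inverted and why; the paper's citation, if applied to the right operator, would also deliver surjectivity, which you neither need nor claim. One minor caveat: your argument establishes only injectivity, which suffices here precisely because you first produce a solution via $\PItD$ and then use injectivity to match it --- worth stating explicitly that this is why injectivity, not full invertibility, is enough.
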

\begin{proof}
The exterior \Calderon\,system~\eqref{exterior:Calderon:system:Helmholtz} also reads
\[
\begin{cases}
(\Kk-\frac{1}{2}) (\gammazext u) - \Vk (\gammaoext u)=0\\
\Wk (\gammazext u) + (\Kprimek + \frac{1}{2}) (\gammaoext u) =0.\\
\end{cases}
\]
Owing to the second equation of~\eqref{first:system:Helmholtz} and to the transmission conditions in~\eqref{Helmholtz:transmission:problem}, we get
\begin{equation} \label{rewriting:exterior:Calderon:Helmholtz:part:2}
\begin{cases}
(\Kk-\frac{1}{2}) (\uext) + i k \Vk (\uext) - \Vk (\m) = 0\\
- \Wk (\uext) + i k (\Kprimek + \frac{1}{2}) ( \uext) - (\Kprimek + \frac{1}{2})\m = 0.\\
\end{cases}
\end{equation}
Multiplying the first equation in \eqref{rewriting:exterior:Calderon:Helmholtz:part:2} by~$i k$, we deduce
\begin{equation} \label{rewriting:exterior:Calderon:Helmholtz:part:3}
\begin{cases}
-i k (\frac{1}{2}- \Kk) (\uext) + i  (i k \Vk) (k \uext) -  i k \Vk(\m) = 0 \\
- \Wk (\uext) + i k (\Kprimek + \frac{1}{2}) ( \uext) - (\Kprimek + \frac{1}{2})\m = 0.\\
\end{cases}
\end{equation}
We obtain \eqref{formula:Chandler-Wilde} by summing the two equations in \eqref{rewriting:exterior:Calderon:Helmholtz:part:3}, whence the name \emph{combined} for the operators in~\eqref{definition:Bk:and:Akprime}.
To conclude the proof, it suffices to show that~\eqref{formula:Chandler-Wilde} implies~\eqref{second:system:Helmholtz}.
To this aim, we use that the operator~$\Bk + i \Aprimek$ is invertible; see~\cite[Theorem~{2.27}]{actaBEMhelmholtz}.
\end{proof}

\begin{remark}
\label{remk:Aprimek-invertible}
The operators $\Bk$, $\Aprimek$, $\Bk+i \Aprimek$ are invertible
by \cite[Thm.~{2.27}]{actaBEMhelmholtz}. Wavenumber-explicit estimates
for the operator $\Aprimek$ and the exterior Dirichlet-to-Neumann operators are available in \cite{baskin-spence-wunsch16} 
for so-called nontrapping domains $\Omega$. 
The use of so-called combined field integral equations that are well-posed for all wavenumbers~$k$ goes back 
at least to \cite{BurtonMiller, BrakhageWerner}; we refer to~\cite{actaBEMhelmholtz} for a more detailed discussion.
\eremk
\end{remark}

Next, having at our disposal Proposition~\ref{proposition:CFIE}, we write the mortar coupling of the interior and exterior Helmholtz problems in weak form, which reads
\begin{equation} \label{weak:formulation:mortar:Helmholtz}
\begin{cases}
\text{find } (u,\m, \uext) \in H^1(\Omega) \times H^{-\frac{1}{2}}(\Gamma) \times H^{\frac{1}{2}}(\Gamma) \text{ such that}\\
(\Ad \nabla u, \nabla v)_{0,\Omega} - ((\k)^2 \,u, v)_{0,\Omega} + i k( u,v)_{0,\Gamma} - \langle \m, v \rangle = (\f, v)_{0,\Omega}\quad \forall v \in H^1(\Omega)\\
\langle  (\Bk + i k  \Aprimek)\uext  - \Aprimek \m, \vtilde  \rangle = 0 \quad \forall \vtilde \in H^{\frac{1}{2}}(\Gamma)\\
\langle u,\lambda \rangle - \langle  (\frac{1}{2} + \Kk) \uext - \Vk (\m - i k\uext), \lambda \rangle =  0 \quad \forall \lambda \in H^{-\frac{1}{2}}(\Gamma).
\end{cases}
\end{equation}
Problem~\eqref{weak:formulation:mortar:Helmholtz} is equivalent to the following problem:
\begin{equation} \label{equivalent:weak:formulation}
\begin{cases}
\text{find } (u, \m, \uext) \in H^1(\Omega) \times H^{-\frac{1}{2}}(\Gamma) \times H^{\frac{1}{2}}(\Gamma)\text{ such that}\\
\T( (u,\m, \uext) , (v,\lambda,\vtilde) ) = (\f, v)_{0,\Omega} \quad \forall (v,\lambda,\vtilde) \in H^1(\Omega) \times H^{-\frac{1}{2}}(\Gamma) \times H^{\frac{1}{2}}(\Gamma),
\end{cases}
\end{equation}
where we have set, by a proper linear combination of the three equations in~\eqref{weak:formulation:mortar:Helmholtz},
\begin{equation} \label{form:T:for:Helmholtz}
\begin{split}
\T( (u, \m, \uext) , 	&(v,\lambda,\vtilde) ) = (\Ad \nabla u, \nabla v )_{0,\Omega} -  ( (\k)^2u, v)_{0,\Omega} + i k( u, v)_{0,\Gamma} - \langle \m, v \rangle \\
				& - \langle  (-\Wk- i k ( \frac{1}{2} - \Kk  )  + i k   ( \frac{1}{2} +\Kprimek + i k \Vk ) )\uext 
-  ( \frac{1}{2} +\Kprimek + i k \Vk ) \m, \vtilde   \rangle  
\\&     +  \langle u,\lambda \rangle -  \langle (\frac{1}{2} + \Kk) \uext - \Vk (\m - i k \uext), \lambda  \rangle .\\
\end{split}
\end{equation}

\begin{remark}
\label{remk:three-field} 
For the special case $k = 0$, the sesquilinear form $\T$ reduces
to the one that is used in the ``three-field'' coupling strategy
for Poisson problems; see, e.g., \cite[Def.~{4.2}]{erath12}.
A direct calculation using~\eqref{eq:adjoint-laplace} shows that, for $k= 0$, one has 
$\T\bigl( (u,m,\uext),(u,m,\uext) \bigr)
 = \|\Ad^{\frac{1}{2}} \nabla u \|^2_{L^2(\Omega)} 
   + \langle \Wz \uext,\uext\rangle  + \langle \Vz \m,\m\rangle$, which is nonnegative. \eremk
\end{remark}
\medskip

In Theorem~\ref{theorem:Garding:inequality} below, we prove that $\T(\cdot, \cdot)$ satisfies a G{\aa}rding inequality, which then allows us to prove the following existence and uniqueness result:
\begin{thm} \label{theorem:well-posedness:of:weak:mortar:formulation}
Assuming \eqref{assumption:uniqueness} and that the interface~$\Gamma$ is smooth, problem~\eqref{weak:formulation:mortar:Helmholtz} and, consequently, problem~\eqref{equivalent:weak:formulation}, admit a unique solution.
In particular, the sesquilinear form $\T(\cdot, \cdot)$ satisfies a
positive inf-sup condition with a wavenumber-dependent constant.
\end{thm}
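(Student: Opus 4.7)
The plan is to combine the G{\aa}rding inequality (Theorem~\ref{theorem:Garding:inequality}, established next), which will reduce the problem to a Fredholm alternative on the product Hilbert space $X := H^{1}(\Omega) \times H^{-\frac12}(\Gamma) \times H^{\frac12}(\Gamma)$, with the injectivity forced by assumption~\eqref{assumption:uniqueness}. Continuity of $\T$ on $X$ is clear from the trace theorem and the mapping properties in Proposition~\ref{proposition:properties:compact:operators}, so I identify $\T$ with a bounded operator $\mathcal T_X \colon X \to X'$. Writing $\mathcal T_X = \mathcal T_0 + \mathcal C$, where $\mathcal T_0$ is the coercive part built around $\|\Ad^{1/2} \nabla u\|_{0,\Omega}^{2} + \langle \Wz \uext,\uext\rangle + \langle \Vz \m,\m\rangle$ as in the $k = 0$ identity of Remark~\ref{remk:three-field}, and $\mathcal C$ collects the $k$-dependent volume and boundary remainders, compactness of $\mathcal C$ will follow from the compact embedding $H^{1}(\Omega) \hookrightarrow L^{2}(\Omega)$ together with the smoothing estimates~\eqref{compact:part} for $\Vk - \Vz$, $\Kk - \Kz$, $\Kprimek - \Kprimez$, and $\Wk - \Wz$. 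Hence $\mathcal T_X$ will be Fredholm of index zero, and the theorem reduces to showing $\ker \mathcal T_X = \{0\}$.

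For this, let $(u, \m, \uext) \in X$ satisfy the homogeneous version of~\eqref{weak:formulation:mortar:Helmholtz}. Localising the first equation with $v \in C_{c}^{\infty}(\Omega)$ gives $-\div(\Ad \nabla u) - (\k)^{2} u = 0$ in $\Omega$, and integration by parts with general $v \in H^{1}(\Omega)$ then produces the interior Robin identity $\gammaoint u + i k \gammazint u = \m$ on $\Gamma$ (using that $\Ad \equiv 1$ near $\Gamma$). Define the radiating function
\[
\Uext(\xbold) := \Ktildek \uext(\xbold) - \Vtildek(\m - i k \uext)(\xbold), \qquad \xbold \in \Omegap,
\]
which solves $-\Delta \Uext - k^{2} \Uext = 0$ in $\Omegap$ together with the Sommerfeld radiation condition. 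The third weak equation combined with the jump relations~\eqref{eq:jump-rel-V-1} and~\eqref{eq:jump-rel-K-1} yields $\gammazint u = \gammazext \Uext$. For the exterior traces of $\Uext$, the second weak equation together with Proposition~\ref{proposition:CFIE} and the invertibility recorded in Remark~\ref{remk:Aprimek-invertible} identifies $\uext$ as the Dirichlet trace of the exterior radiating solution with impedance datum $\m$; the Green representation formula then forces $\gammazext \Uext = \uext$ and $\gammaoext \Uext = \m - i k \uext$. Together with the Robin identity this gives $\llbracket U \rrbracket_\Gamma = 0$ and $\llbracket \partial_{\nGamma} U \rrbracket_\Gamma = 0$, where $U$ denotes the function equal to $u$ in $\Omega$ and to $\Uext$ in $\Omegap$. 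Assumption~\eqref{assumption:uniqueness} now forces $U \equiv 0$, so $u = 0$, $\uext = \gammazext \Uext = 0$, and $\m = \gammaoint u + i k \gammazint u = 0$.

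Injectivity together with Fredholm-of-index-zero then yields bijectivity of $\mathcal T_X$, and the positive inf-sup constant follows from the standard isomorphism argument on Hilbert spaces; its dependence on $k$ is inherited from the G{\aa}rding constants and from $\|\mathcal T_X^{-1}\|$. I expect the main obstacle to be the injectivity step: it is algebraic in nature, but it requires the three weak equations to be converted to strong form and matched with the exterior Green representation by coordinating the jump relations of Section~\ref{subsection:BIO:for:Helmholtz} and the combined-field identification of Proposition~\ref{proposition:CFIE}, so that both Dirichlet and Neumann traces close up across $\Gamma$ and~\eqref{assumption:uniqueness} becomes applicable.
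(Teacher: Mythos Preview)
Your proposal is correct and follows the same strategy as the paper's proof, which consists of a single sentence invoking Fredholm theory via the G{\aa}rding inequality (Theorem~\ref{theorem:Garding:inequality}) combined with the uniqueness assumption~\eqref{assumption:uniqueness}. You supply in detail the injectivity step that the paper leaves implicit---namely, that a solution of the homogeneous system~\eqref{weak:formulation:mortar:Helmholtz} yields, via the exterior Green representation and Proposition~\ref{proposition:CFIE}, a function on~$\mathbb R^3$ with vanishing Dirichlet and Neumann jumps across~$\Gamma$ so that~\eqref{assumption:uniqueness} applies; your argument there is sound.
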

\begin{proof}
By Fredholm theory, the assertion follows from a combination of Theorem~\ref{theorem:Garding:inequality}, i.e., the G{\aa}rding inequality, and the uniqueness assumption~\eqref{assumption:uniqueness}.
\end{proof}

\subsection{G{\aa}rding inequality} \label{subsection:Garding:inequality}
In this section, we prove a G{\aa}rding inequality for~$\T(\cdot, \cdot)$ defined in~\eqref{form:T:for:Helmholtz}.
Such an inequality is one of the two lynchpins for the proof of Theorem~\ref{theorem:well-posedness:of:weak:mortar:formulation}, 
as well as for the analysis of the convergence of the method carried
out in Section~\ref{section:FEM-BEM:mortar}.
\begin{thm} [G{\aa}rding inequality]
\label{theorem:Garding:inequality}
Let~$\T(\cdot, \cdot)$ be defined as in~\eqref{form:T:for:Helmholtz},
and assume that the interface~$\Gamma$ is smooth. Fix $k_0 > 0$. 
There there is $c > 0$ (depending only on $k_0$ and $\Omega$) and,
for each $k \ge k_0$, there is a positive constant~$\cG(k)$ depending
on~$k$ and $\Omega$ such that
for all $(v,\lambda,\vext)\in H^1(\Omega) \times H^{-\frac{1}{2}}(\Gamma) \times H^{\frac{1}{2}} (\Gamma)$
\begin{equation} \label{final:Garding}
\begin{split}
\Re ( 	& \T( (v,\lambda,\vext),   (v,\lambda,\vext)   )  ) \\
& \ge c \left\{ 
\| \Ad^{\frac{1}{2}} \nabla v \|^2_{0,\Omega}
+ \Vert \lambda \Vert^2_{-\frac{1}{2},\Gamma}
+ \Vert \vext
\Vert^2_{\frac{1}{2},\Gamma} 
\right\}  
- \left\{k^2 \Vert n\,v \Vert^2_{0,\Omega} 
+\cG(k) \left(\Vert \lambda \Vert_{-\frac{5}{2}, \Gamma}^2  + \Vert \vext \Vert_{-\frac{3}{2}, \Gamma}^2\right) \right\}.
\end{split}
\end{equation}
\end{thm}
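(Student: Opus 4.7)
The plan is to evaluate $\T((v,\lambda,\vext),(v,\lambda,\vext))$ and to split it as the ``three-field Poisson'' sesquilinear form of Remark~\ref{remk:three-field} plus wavenumber-dependent perturbations that become compact after taking real parts. By direct substitution into~\eqref{form:T:for:Helmholtz}, the cross pairings $-\langle\lambda,v\rangle+\langle v,\lambda\rangle$ are complex conjugates of each other via the $H^{-1/2}$--$H^{1/2}$ duality, hence contribute zero real part; similarly $ik\|v\|^2_{0,\Gamma}$ is purely imaginary, and the volume form contributes at worst $\|\Ad^{1/2}\nabla v\|^2_{0,\Omega}-k^2\|nv\|^2_{0,\Omega}$ to $\Re\T$, which already matches the $v$-parts on the right-hand side of~\eqref{final:Garding}.

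Next, I decompose each boundary operator as $\Vk=\Vz+(\Vk-\Vz)$, and analogously for $\Kk$, $\Kprimek$, $\Wk$. Using the adjoint identities~\eqref{eq:adjoint-laplace} (notably $\Vz^*=\Vz$ and $\Kz^*=\Kprimez$), all cross $\lambda$--$\vext$ couplings coming from the $k=0$ operators collapse to expressions of the form $z-\bar z$, hence have zero real part; what survives of the $k=0$ boundary contribution is $\Re\langle \Wz\vext,\vext\rangle+\Re\langle \Vz\lambda,\lambda\rangle$, to which I apply~\eqref{coercivity:operators:Laplace}. This yields $\Re\langle\Vz\lambda,\lambda\rangle\ge c_1\|\lambda\|^2_{-1/2,\Gamma}$ immediately, and $\Re\langle\Wz\vext,\vext\rangle\ge c_3|\vext|^2_{1/2,\Gamma}$; the gap between the seminorm and the full $H^{1/2}$ norm is closed by estimating the mean $|\overline{\vext}|^2\lesssim \|\vext\|^2_{L^2(\Gamma)}\lesssim \|\vext\|^{3/2}_{1/2,\Gamma}\|\vext\|^{1/2}_{-3/2,\Gamma}$ and splitting by Young into an absorbable fraction of $\|\vext\|^2_{1/2,\Gamma}$ plus a $k$-independent multiple of $\|\vext\|^2_{-3/2,\Gamma}$.

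Third, I control the $k$-dependent perturbations of the boundary integral terms. The contributions $-ik\langle(\Kk+\Kprimek)\vext,\vext\rangle$ and $k^2\langle\Vk\vext,\vext\rangle$ split into static and smoothing parts: the static part of the former vanishes under $\Re$ because $\Kz+\Kprimez=\Kz+\Kz^*$ is self-adjoint, while the static part of the latter is $k^2\Re\langle\Vz\vext,\vext\rangle\ge 0$. The residual cross coupling $ik[\langle\Vz\lambda,\vext\rangle-\langle\Vz\vext,\lambda\rangle]=2k\Im\langle\Vz\vext,\lambda\rangle$ is bounded by $k\|\vext\|_{-1/2,\Gamma}\|\lambda\|_{-1/2,\Gamma}$ and then, via interpolation $\|\vext\|^2_{-1/2,\Gamma}\le\|\vext\|_{1/2,\Gamma}\|\vext\|_{-3/2,\Gamma}$ combined with Young, turned into a small fraction of the coercive norms plus a $k$-dependent multiple of $\|\vext\|^2_{-3/2,\Gamma}$. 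All smoothing differences $\Vk-\Vz$, $\Kk-\Kz$, $\Kprimek-\Kprimez$, $\Wk-\Wz$ gain at least one Sobolev order by Proposition~\ref{proposition:properties:compact:operators} (and further orders on the smooth $\Gamma$ by duality), producing bounds of the schematic form $|\langle[(\cdot)_k-(\cdot)_0]\,a,b\rangle|\le \epsilon\,(\text{strong norms})+C_\epsilon(k)\,(\text{weak norms})$, with weak norms $\|\lambda\|^2_{-5/2,\Gamma}$ and $\|\vext\|^2_{-3/2,\Gamma}$. Choosing $\epsilon$ small but uniform in $k$ completes the absorption and yields~\eqref{final:Garding}.

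The main obstacle will be the bookkeeping of the $\lambda$--$\vext$ cross couplings: one must verify that at $k=0$ every non-compact contribution cancels exactly through~\eqref{eq:adjoint-laplace}, while the residual $k$-dependent cross term $2k\Im\langle\Vz\vext,\lambda\rangle$ must be interpolated carefully so that its $\vext$-factor ends up in the compact $\|\vext\|_{-3/2,\Gamma}$ bucket. A secondary technical nuisance is the promotion of $\Wz$-coercivity from $H^{1/2}(\Gamma)/\mathbb{C}$ to $H^{1/2}(\Gamma)$ with a coefficient that is independent of $k$.
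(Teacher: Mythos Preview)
Your approach is the same as the paper's: split each boundary operator as its $k=0$ part plus a smoothing difference, use~\eqref{coercivity:operators:Laplace} and~\eqref{eq:adjoint-laplace} for the $k=0$ block, and estimate all differences by Proposition~\ref{proposition:properties:compact:operators} and Young. The paper packages this as $\T = T_1 + T_2 + T_3$ with $T_1$ the coercive block, $T_3$ the compact differences plus the $(\k)^2$ volume term, and the claim $\Re T_2 = 0$ on the diagonal.

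One genuine difference is worth highlighting. You explicitly isolate the cross term $ik\langle\Vz\lambda,\vext\rangle - ik\langle\Vz\vext,\lambda\rangle$, observe that it equals $2k\,\Im\langle\Vz\vext,\lambda\rangle$ and hence does \emph{not} vanish under $\Re$, and absorb it via $\|\vext\|_{-1/2,\Gamma}^2 \le \|\vext\|_{1/2,\Gamma}\|\vext\|_{-3/2,\Gamma}$ and Young into the $\cG(k)\|\vext\|_{-3/2,\Gamma}^2$ bucket. The paper places this term in $T_2$ and asserts $\Re T_2 = 0$; but $\Vz^\ast = \Vz$ gives $\langle\Vz\lambda,\vext\rangle = \overline{\langle\Vz\vext,\lambda\rangle}$, so the combination is $ik(\bar w - w) = 2k\,\Im w$, which is real and generically nonzero. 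Your interpolation argument is the correct way to handle it, and it feeds directly into the right-hand side of~\eqref{final:Garding}. A second, minor difference: to upgrade $|\vext|_{1/2,\Gamma}$ (from $\Wz$) to the full norm $\|\vext\|_{1/2,\Gamma}$, the paper keeps $k^2\langle\Vz\vext,\vext\rangle \gtrsim k^2\|\vext\|_{-1/2,\Gamma}^2$ inside the coercive block $T_1$ and uses $|\vext|_{1/2,\Gamma}^2 + k^2\|\vext\|_{-1/2,\Gamma}^2 \gtrsim \|\vext\|_{1/2,\Gamma}^2$, whereas you discard $k^2\langle\Vz\vext,\vext\rangle \ge 0$ and recover the constant mode by interpolation down to $\|\vext\|_{-3/2,\Gamma}$. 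Both routes are valid and yield a lower constant $c$ depending only on $k_0$ and $\Omega$.
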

\begin{proof}
  The estimate~\eqref{final:Garding} is not $k$-explicit
  since the constant $\cG(k)$ 
depends on $k$. 
Notwithstanding, we highlight all the occurrences where the estimates
depend on the
wavenumber~$k$.

Given the positivity results for $k=0$
  in~\eqref{coercivity:operators:Laplace}, see also Remark~\ref{remk:three-field}, 
we write 
$\Vk = \Vz + (\Vk-\Vz)$, 
$\Kk = \Kz + (\Kk-\Kz)$, 
$\Kprimek = \Kz^\prime + (\Kprimek-\Kz^\prime)$, 
$\Wk = \Wz + (\Wk-\Wz)$, and split the sesquilinear form $\T(\cdot,\cdot)$ accordingly: 
\begin{align}
\nonumber 
 \T( (&u, \m, \uext) , 	(v,\lambda,\vtilde) )  \\
\nonumber 
=& (\Ad \nabla u, \nabla v )_{0,\Omega} -  ( (\k)^2u, v)_{0,\Omega} + i k( u, v)_{0,\Gamma} - \langle \m, v \rangle 
			\\
\nonumber 
& 
- \langle  (-\Wk- i k ( \frac{1}{2} - \Kk  )  + i k   ( \frac{1}{2} +\Kprimek + i k \Vk ) )\uext 
				 -  ( \frac{1}{2} +\Kprimek + i k \Vk ) \m, \vtilde   \rangle  \\
\nonumber 
&     +  \langle u,\lambda \rangle 
-  \langle (\frac{1}{2} + \Kk) \uext - \Vk (\m - i k \uext),\lambda \rangle  \\
\nonumber 
=& 
\Bigl\{ (\Ad \nabla u, \nabla v )_{0,\Omega} 
+ \langle \Wz\uext,\vext\rangle 
+ k^2 \langle \Vz \uext,\vext\rangle  
+ \langle \Vz m, \lambda \rangle 
\Bigr\} 
\\ 
\nonumber 
&
+ \Bigl\{ i k ( u, v)_{0,\Gamma} 
- i k \langle \Kz \uext,\vext\rangle 
- i k \langle \Kz^\prime \uext,\vext\rangle 
+\langle (\frac{1}{2}+\Kz^\prime) m,\vext\rangle 
\\ 
\nonumber 
& 
+ i k \langle \Vz m,\vext\rangle 
- ik\langle \Vz \uext, \lambda\rangle 
- \langle (\frac{1}{2} + \Kz) \uext, \lambda\rangle 
- \langle \m, v \rangle 
 +  \langle u,\lambda \rangle 
\Bigr\} 
\\ 
\nonumber 
&+ 
\Bigl\{ 
\langle (\Wk-\Wz) \uext,\vext\rangle 
- ik \langle   (  \Kk- \Kz  )  \uext,\vext\rangle 
- ik \langle    (\Kprimek -\Kz^\prime)\uext,\vext\rangle  
\\ 
\nonumber 
& 
+ k^2 \langle (\Vk -\Vz) \uext ,\vext \rangle 
+ \langle (\Kprimek -\Kz^\prime) m,\vext\rangle 
+ i k \langle  (\Vk -\Vz) \m, \vtilde   \rangle   
\\ 
\nonumber 
& 
-  \langle (\Kk-\Kz) \uext ,\lambda \rangle 
+ \langle (\Vk - \Vz)\m, \lambda\rangle
     - ik \langle   (\Vk - \Vz) \uext,\lambda \rangle
     -  ( (\k)^2 u, v)_{0,\Omega} 
\Bigr\}  \\
\label{eq:T1T2T3}
 =:& T_1(\cdot,\cdot) + T_2(\cdot,\cdot) + T_3(\cdot,\cdot), 
\end{align}
where the sesquilinear forms $T_1$, $T_2$, $T_3$ correspond
to the three expressions in $\{\cdots\}$. 

\emph{1.~step:} We show that
\begin{equation}\label{eq:GardingStep1}
  \Re\left(T_1\bigl((v,\lambda,\vext),(v,\lambda,\vext)\bigr)\right)=
  T_1\bigl((v,\lambda,\vext),(v,\lambda,\vext)\bigr)
\gtrsim \| \Ad^{\frac{1}{2}} \nabla v \|^2_{0,\Omega}
+ \Vert \lambda \Vert^2_{-\frac{1}{2},\Gamma} + \Vert \vext \Vert^2_{\frac{1}{2},\Gamma},
\end{equation}
with implied constant dependent on $k_0$ but independent of $k$. In view of the 
positivity assertions for $\Vz$ and $\Wz$
(cf.~\eqref{coercivity:operators:Laplace}), in order to
prove~\eqref{eq:GardingStep1}, it suffices to ascertain
that 
  \begin{equation}\label{eq:seminorm}
|\vext|^2_{\frac{1}{2},\Gamma} +k^2 \|\vext\|^2_{-\frac{1}{2},\Gamma}
\gtrsim
\|\vext\|^2_{\frac{1}{2},\Gamma},
\end{equation}
with implied constant 
dependent on $k_0$ but independent of $k$.
Let $\overline v:= |\Gamma|^{-1} (\vext,1)_{0,\Gamma}$ be the~$L^2(\Gamma)$-projection of~$\vext$ onto~${\mathbb C}$. 
We note that $|\vext|^2_{\frac{1}{2},\Gamma} \sim \inf_{c\in\mathbb
    C}\|\vext-c\|^2_{\frac{1}{2},\Gamma}=\|\vext-\overline v\|^2_{\frac{1}{2},\Gamma}$. Next, 
$ |\Gamma| |\overline{v}|  = |(\vext,1)_{0,\Gamma} | 
 = |\langle \vext,1\rangle| \leq \|\vext\|_{-\frac{1}{2},\Gamma} 
 \| 1\|_{\frac{1}{2},\Gamma}  \lesssim 
 \|\vext\|_{-\frac{1}{2},\Gamma}$. 
Hence, 
  \[
    \|\vext\|^2_{\frac{1}{2},\Gamma}\le \|\vext-\overline v\|^2_{\frac{1}{2},\Gamma}+\|\overline v\|^2_{\frac{1}{2},\Gamma}=
    \|\vext-\overline v\|^2_{\frac{1}{2},\Gamma}+|\overline
    v|^2 \|1\|_{\frac{1}{2},\Gamma}  \lesssim |\vext|^2_{\frac{1}{2},\Gamma} 
+ \|\vext\|^2_{-\frac{1}{2},\Gamma}.
    \]
Since $k\ge k_0>0$, we get $|\vext|^2_{\frac{1}{2},\Gamma} +
\|\vext\|^2_{-\frac{1}{2},\Gamma}
\lesssim|\vext|^2_{\frac{1}{2},\Gamma} +k^2
  \|\vext\|^2_{-\frac{1}{2},\Gamma}$
  (with hidden constant depending on
  $k_0$ but independent of $k$), and~\eqref{eq:seminorm} follows.

\emph{2.~step:} Using $\Re(\overline z) = \Re( z)$ for all $z \in \mathbb C$
and~\eqref{eq:adjoint-laplace},
we see that 
\begin{equation}\label{eq:GardingStep2}
\Re \left(T_2\bigl( (v,\lambda,\vext),(v,\lambda,\vext)  \bigr)\right) = 0. 
\end{equation}

\emph{3.~step:} Using the continuity of the operators in~\eqref{compact:part}, we bound each of the first nine terms in $T_3\bigl(v,\lambda,\vext),(v,\lambda,\vext)\bigr)$ as follows:  
  \begin{equation}
\label{eq:garding-foobar}
\begin{split}
|\langle \lambda, (\Vk - \Vz)\lambda\rangle| & \leq \|\lambda\|_{-\frac{5}{2},\Gamma} \|(\Vk - \Vz) \lambda\|_{\frac{5}{2},\Gamma} 
\lesssim
\|\lambda\|_{-\frac{5}{2},\Gamma} \|\lambda\|_{-\frac{1}{2} ,\Gamma}, \\[0.2cm] 
|\langle \vext, (\Wk - \Wz)\vext\rangle| & \leq \|\vext\|_{-\frac{3}{2},\Gamma} \|(\Wk - \Wz) \vext\|_{\frac{3}{2},\Gamma} 
\lesssim
\|\vext\|_{-\frac{3}{2},\Gamma} \|\vext\|_{\frac{1}{2} ,\Gamma}, \\[0.2cm]  
|\langle (\Vk - \Vz)\vext,\vext\rangle| & \leq \|(\Vk - \Vz) \vext\|_{\frac{7}{2},\Gamma} \|\vext\|_{-\frac{7}{2} ,\Gamma} 
\lesssim
\|\vext\|_{\frac{1}{2},\Gamma} \|\vext\|_{-\frac{7}{2} ,\Gamma}, \\[0.2cm]  
|\langle (\Kprimek - \Kprimez)\lambda,\vext\rangle| & \leq \|(\Kprimek - \Kprimez) \lambda\|_{\frac{3}{2},\Gamma} \|\vext\|_{-\frac{3}{2} ,\Gamma} 
\lesssim
\|\lambda\|_{-\frac{1}{2},\Gamma} \|\vext\|_{-\frac{3}{2} ,\Gamma}, \\[0.2cm]  
|\langle (\lambda,(\Kk - \Kz)\vext\rangle| & \leq \|\lambda\|_{-\frac{5}{2},\Gamma} \|(\Kk - \Kz) \vext\|_{\frac{5}{2} ,\Gamma} 
\lesssim
\|\lambda\|_{-\frac{5}{2},\Gamma} \|\vext\|_{\frac{1}{2} ,\Gamma}, \\[0.2cm]  
|\langle ((\Vk - \Vz)\lambda, \vext\rangle| & \leq \|(\Vk -\Vz)\lambda\|_{\frac{5}{2},\Gamma} \|\vext\|_{-\frac{5}{2} ,\Gamma} 
\lesssim
\|\lambda\|_{-\frac{1}{2},\Gamma} \|\vext\|_{-\frac{5}{2} ,\Gamma}, \\[0.2cm]  
|\langle \lambda, (\Vk - \Vz) \vext\rangle| & \leq \|\lambda\|_{-\frac{7}{2},\Gamma} \|(\Vk - \Vz) \vext\|_{\frac{7}{2} ,\Gamma} 
\lesssim
\|\lambda\|_{-\frac{7}{2},\Gamma} \|\vext\|_{\frac{1}{2} ,\Gamma}, \\[0.2cm]  
|\langle (\Kprimek - \Kprimez) \vext,\vext\rangle| & \leq \|(\Kprimek - \Kprimez) \vext\|_{\frac{5}{2} ,\Gamma} \|\vext\|_{-\frac{5}{2},\Gamma} 
\lesssim
\|\vext\|_{\frac{1}{2},\Gamma} \|\vext\|_{-\frac{5}{2} ,\Gamma}, \\[0.2cm]  
|\langle \vext,(\Kk - \Kz) \vext,\vext\rangle| & \leq \|\vext\|_{-\frac{5}{2},\Gamma} \|(\Kk - \Kz) \vext\|_{\frac{5}{2} ,\Gamma} 
\lesssim
\|\vext\|_{-\frac{5}{2},\Gamma} \|\vext\|_{\frac{1}{2} ,\Gamma},
\end{split}
\end{equation}                                                 
where the hidden constants depend on $k$. The last term is simply
  \begin{equation}\label{eq:GardingLast}
((kn)^2v,v)_{0,\Omega}=k^2\|n\,v\|_{0,\Omega}^2.
    \end{equation}
Collecting~\eqref{eq:GardingStep1}, \eqref{eq:GardingStep2}, \eqref{eq:GardingLast},
and~\eqref{eq:garding-foobar},
and using Young's
inequality show the claim.
\end{proof}

\begin{remark} \label{remark:nonsmooth-interface}
The G{\aa}rding inequality in Theorem~\ref{theorem:Garding:inequality} relies on the 
compactness properties of~$\Vk-\Vz$, $\Kk-\Kz$, $\Kprimek-\Kprimez$, and~$\Wk-\Wz$ 
as employed in~\eqref{eq:garding-foobar}. Such compactness properties are still valid for Lipschitz domains. 
For Lipschitz domains, the spaces $H^{s}(\Gamma)$, $|s| \leq 1$, can be defined using
local (Lipschitz) charts; see, e.g., \cite{mclean2000strongly}. We claim that, for $\varepsilon \in (0,1)$,
\begin{subequations}
\label{eq:mapping-lipschitz}
\begin{align}
\label{eq:mapping-lipschitz-a}
\Vk - \Vz \colon H^{-1/2}(\Gamma) & \rightarrow H^{1-\varepsilon}(\Gamma), 
& 
\Kprimek - \Kprimez \colon H^{-1/2}(\Gamma) & \rightarrow L^{2}(\Gamma), \\
\label{eq:mapping-lipschitz-b}
\Kk - \Kz \colon H^{1/2}(\Gamma) & \rightarrow H^{1-\varepsilon}(\Gamma), 
& 
\Wk - \Wz \colon H^{1/2}(\Gamma) & \rightarrow L^{2}(\Gamma). 
\end{align}
\end{subequations}
To see~\eqref{eq:mapping-lipschitz-a}, consider for 
$\varphi \in H^{-1/2}(\Gamma)$
the potential $u = \Vtildek \varphi - \widetilde \V_0 \varphi$. It is in $H^1(\Omega) \cap H^1(\Omega^+ \cap B_R(0))$ 
and satisfies~\eqref{eq:proof-mapping-properties}. We note that $k^2 \widetilde \V_0 \varphi \in H^1(\Omega) \cap H^1(\Omega^+ \cap B_R(0))$ 
by \cite[Thm.~{1}]{costabel}. Therefore, $\widetilde \V_0 \varphi$ is in $L^2(B_R(0))$, 
and then~\eqref{eq:proof-mapping-properties} implies $u \in H^2_{loc}({\mathbb R}^3)$. 
For Lipschitz domains, the trace operator is continuous $H^{s}(\Omega)  \rightarrow H^{s-1/2}(\Gamma)$
for $1/2 < s < 3/2$, \cite[Thm.~{3.38}]{mclean2000strongly}. This implies that 
$\gamma_0 u \in H^{1-\varepsilon}(\Gamma)$ for any $\varepsilon  \in (0,1]$, and this gives 
the first mapping property in~\eqref{eq:mapping-lipschitz-a}. The conormal derivative of $u$ on $\Gamma$ 
is $\partial_{\nGamma} u = \nabla u \cdot \nGamma $. 
Since $\nabla u \in H^{1}(\Omega)$ and $ \nGamma\in L^\infty(\Gamma)$, 
we infer $\partial_{\nGamma} u \in L^2(\Gamma)$. This gives the second mapping property in~\eqref{eq:mapping-lipschitz-a}. 
The mapping properties in~\eqref{eq:mapping-lipschitz-b} are shown by similar arguments using, 
for $\psi \in H^{1/2}(\Gamma)$,  the potential  $u = \Ktildek \psi - \widetilde \Kz \psi$, which 
is in $H^1(\Omega) \cap H^1(\Omega^+ \cap B_R(0))$ by \cite[Thm.~{1}]{costabel}. 

Inserting the mapping properties~\eqref{eq:mapping-lipschitz} in~\eqref{eq:garding-foobar} yields, for any chosen 
$\varepsilon  \in (0,1/2)$ and all $(v,\lambda,\vext) \in H^1(\Omega) \times H^{-1/2}(\Gamma) \times H^{1/2}(\Gamma)$ ,
the G{\aa}rding inequality 
\begin{equation*} 
\begin{split}
\Re \bigl( \T\bigl( (v,\lambda,\vext),   (v,\lambda,\vext)   \bigr) \bigr ) 
		 \gtrsim&  \Vert\Ad^{\frac12}\nabla v
                 \Vert^2_{0,\Omega} + \Vert \lambda
                 \Vert^2_{-\frac{1}{2},\Gamma} + \| \vext
                 \|^2_{\frac{1}{2},\Gamma} \\
                 &- \cG(k) \left( \Vert n\,v \Vert^2_{0,\Omega}  + \Vert \lambda \Vert_{-1+\varepsilon, \Gamma}^2  + \Vert \vext \Vert_{0, \Gamma}^2 \right).  
\end{split}
\end{equation*}
\eremk
\end{remark}
\medskip

The compact perturbation in the G{\aa}rding inequality of 
Theorem~\ref{theorem:Garding:inequality} essentially arises from the 
the differences 
$\Vk - \Vz$, $\Kk - \Kz$, $\Kprimek - \Kz'$, $\Wk - \Wz$. 
For analytic $\Gamma$, a very good description of these differences 
is provided in~\cite{melenk2012mapping}. In Appendix~\ref{appendix:garding}, based on that characterization of the difference operators
(see~\eqref{eq:differences}), a $k$-explicit G{\aa}rding inequality for analytic $\Gamma$ is proven (see Theorem~\ref{thm:k-explicit-garding}).

\subsection{Regularity of solutions to the dual problem of~\eqref{weak:formulation:mortar:Helmholtz}} \label{subsection:regularity:dual:problem}
In this section, we analyze a problem dual to problem~\eqref{weak:formulation:mortar:Helmholtz}, or equivalently to problem~\eqref{equivalent:weak:formulation}.
The regularity of the solution of this dual problem is crucial for the convergence analysis in Section~\ref{section:FEM-BEM:mortar}. 

The dual problem we are interested in reads
\begin{equation} \label{dual:problem:1}
\begin{cases}
\text{find } (\psi, \psim, \psitilde) \in H^1(\Omega)\times H^{-\frac{1}{2}}(\Gamma) \times H^{\frac{1}{2}}(\Gamma) \text{ such that} \\
\T ( (v, \lambda, \vtilde),  (\psi, \psim, \psitilde)) =\Fcal(v,\lambda,\vtilde) \quad \forall (v,\lambda,\vtilde) \in H^1(\Omega)\times H^{-\frac{1}{2}}(\Gamma) \times H^{\frac{1}{2}}(\Gamma),\\
\end{cases}
\end{equation}
where
\[
\Fcal (v,\lambda,\vtilde) = [(v,\rr)_{0,\Omega} +  (\lambda, \rm)_{-\sm,\Gamma} +   (\vtilde, \rext)_{-\sv,\Gamma} ]
\]
for a given $(\rr, \rm, \rext )\in L^2(\Omega) \times H^{-\sm}(\Gamma) \times H^{-\sv}(\Gamma)$ 
and $\sm = \frac{5}{2}$, $\sv = \frac{3}{2}$. 
We recall that the Sobolev inner products $(\cdot,\cdot)_{-\sigma,\Gamma}$ on $H^{-\sigma}(\Gamma)$ are defined in~\eqref{negative:Sobolev:sesquilinear:form}.

More explicitly, we consider: 
\begin{equation} \label{dual:problem:2}
\begin{cases}
\text{find } (\psi, \psim, \psitilde) \in H^1(\Omega)\times H^{-\frac{1}{2}}(\Gamma) \times H^{\frac{1}{2}}(\Gamma) \text{ such that} \\
(\Ad \nabla v, \nabla \psi )_{0,\Omega} -  ((\k)^2 v, \psi)_{0,\Omega} + i k ( v, \psi)_{0,\Gamma} - \langle \lambda, \psi \rangle \\
\quad\quad -\langle  ( \Bk + i k \Aprimek) \vtilde -  \Aprimek
\lambda, \psitilde   \rangle
+ \langle v,\psim\rangle
- \langle (\frac{1}{2} + \Kk) \vtilde - \Vk (\lambda - i k \vtilde) , \psim  \rangle \\
=  \left ((v,\rr)_{0,\Omega} +  (\lambda, \rm)_{-\sm,\Gamma} +  (\vtilde, \rext)_{-\sv,\Gamma} \right)\\
\quad \quad \quad\forall (v,\lambda,\vtilde) \in H^1(\Omega)\times H^{-\frac{1}{2}}(\Gamma) \times H^{\frac{1}{2}}(\Gamma).\\
\end{cases}
\end{equation}
\subsubsection{Riesz representations}
In the following, we need a few technical results.

\begin{lem} \label{lemma:on:adjoint:operators}
The following identities are valid: For all $\varphi\in H^{-\frac{1}{2}}(\Gamma)$ and for all~$\psi \in H^{\frac{1}{2}}(\Gamma)$
\begin{align}
& \Vk^* \varphi = \overline {\Vk \overline \varphi}, \quad \Kk^*\varphi = \overline{\Kk' \overline{\varphi}}, \label{adjoint:of:blo:1}\\
& (\Kk')^*\psi= \overline{\Kk \overline{\psi}}, \quad \Wk^* \psi= \overline {\Wk \overline \psi}, \label{adjoint:of:blo:2}
\end{align}
where we recall that $\cdot^*$ denotes the adjoint operator.
Moreover,
for all~$\varphi\in H^{-\frac{1}{2}}(\Gamma)$ and~$\psi \in H^{\frac{1}{2}}(\Gamma)$,
it holds true that
\begin{align} 
& \langle \varphi , (\Ak')^* \psi \rangle =  \langle \varphi , \overline{\frac{1}{2} \overline{\psi} + \Kk \overline{\psi}+i k \Vk \overline{\psi}} \rangle, \label{adjoint:Aprime}\\
  & \langle \varphi, ( \Bk + i k \Aprimek )^*\psi\rangle
    = -\langle \varphi, \overline {
    (\Wk + k^2 \Vk)
    \overline{\psi}}
   + i k	 \overline{(\Kprimek + \Kk)
    \overline{\psi}}
    \rangle.
   \label{adjoint:Aprime:2}
\end{align}
\end{lem}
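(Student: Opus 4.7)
The strategy is to reduce everything to the reciprocity property $G_k(\xbold,\ybold)=G_k(\ybold,\xbold)$ of the Helmholtz fundamental solution. Once \eqref{adjoint:of:blo:1}--\eqref{adjoint:of:blo:2} are in place, identities \eqref{adjoint:Aprime}--\eqref{adjoint:Aprime:2} will follow by pure bookkeeping from the definitions \eqref{definition:Bk:and:Akprime} of $\Aprimek$ and $\Bk$ together with the elementary rules $(\alpha K+L)^*=\overline{\alpha}\,K^*+L^*$ that govern adjoints under the sesquilinear pairing.

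First I would prove \eqref{adjoint:of:blo:1}--\eqref{adjoint:of:blo:2} at the $L^2(\Gamma)$ level and then extend by density using the continuity asserted in \eqref{eq:smoothGamma}. For smooth $\varphi,\psi$, Fubini's theorem gives
\begin{equation*}
\langle \Vk\varphi,\psi\rangle=\int_\Gamma\int_\Gamma G_k(\xbold,\ybold)\,\varphi(\ybold)\,\overline{\psi(\xbold)}\,ds(\ybold)\,ds(\xbold)
=\int_\Gamma \varphi(\ybold)\,\overline{\int_\Gamma G_k(\ybold,\xbold)\,\overline{\psi(\xbold)}\,ds(\xbold)}\,ds(\ybold),
\end{equation*}
where the reciprocity of the kernel is used to swap $G_k(\xbold,\ybold)$ for $G_k(\ybold,\xbold)$ before inserting the outer conjugate. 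The inner expression is precisely $(\Vk\overline{\psi})(\ybold)$, so $\langle\Vk\varphi,\psi\rangle=\langle\varphi,\overline{\Vk\overline{\psi}}\rangle$, i.e.\ $\Vk^*\psi=\overline{\Vk\overline{\psi}}$. For $\Kk$ and $\Kprimek$ the same manipulation applies to the kernels $\partial_{\nGamma(\ybold)}G_k(\xbold,\ybold)$ and $\partial_{\nGamma(\xbold)}G_k(\xbold,\ybold)$, which are mutual transposes by reciprocity; this delivers the identities for $\Kk^*$ and $(\Kprimek)^*$ in \eqref{adjoint:of:blo:1}--\eqref{adjoint:of:blo:2}. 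For $\Wk$ I would invoke the standard Maue-type integration-by-parts representation, which writes $\langle\Wk\psi,\varphi\rangle$ as a double integral whose kernel is built from $G_k(\xbold,\ybold)$ paired with surface-curl and normal factors and is symmetric in $(\xbold,\ybold)$; the same reciprocity argument then yields $\Wk^*\psi=\overline{\Wk\overline{\psi}}$.

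Having secured \eqref{adjoint:of:blo:1}--\eqref{adjoint:of:blo:2}, the last two identities are formal. For \eqref{adjoint:Aprime},
\begin{equation*}
(\Aprimek)^*=\tfrac{1}{2}+(\Kprimek)^*+\overline{ik}\,\Vk^*=\tfrac{1}{2}+\overline{\Kk\overline{(\cdot)}}-ik\,\overline{\Vk\overline{(\cdot)}},
\end{equation*}
and factoring out the global conjugate reproduces the bracketed right-hand side in \eqref{adjoint:Aprime}. For \eqref{adjoint:Aprime:2}, I would first simplify using \eqref{definition:Bk:and:Akprime} to $\Bk+ik\Aprimek=-(\Wk+k^2\Vk)+ik(\Kk+\Kprimek)$, then take the adjoint term-by-term with \eqref{adjoint:of:blo:1}--\eqref{adjoint:of:blo:2}, tracking the sign flip produced by $\overline{ik}=-ik$ on the leading $ik$ factor; the resulting expression matches \eqref{adjoint:Aprime:2} after collecting the outer conjugate.

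I do not anticipate a real obstacle: the only care needed is in keeping track of the conjugations introduced by the sesquilinear convention (in particular $(\alpha K)^*=\overline{\alpha}K^*$) and in extending the $L^2$-level identity to the fractional Sobolev duality pairings, which is standard in view of the mapping properties collected in \eqref{eq:smoothGamma}.
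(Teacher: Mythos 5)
Your approach is sound and leads to the correct conclusions, but it takes a genuinely different route from the paper for the first half of the lemma, so a comparison is worth recording. For the identities \eqref{adjoint:of:blo:1}--\eqref{adjoint:of:blo:2}, the paper simply cites~\cite[eq.~(2.38)]{actaBEMhelmholtz} for \eqref{adjoint:of:blo:1} and then obtains \eqref{adjoint:of:blo:2} from it by a purely formal chain of duality-pairing manipulations,
$\langle \varphi, (\Kprimek)^*\psi\rangle=
  \overline{\langle \overline{\Kprimek\varphi},\overline{\psi}\rangle}
  =\overline{\langle \Kk^*\overline{\varphi},\overline{\psi}\rangle}
  =\overline{\langle \overline{\varphi},\Kk\overline{\psi}\rangle}
  =\langle \varphi , \overline{\Kk\overline{\psi}}\rangle$,
never touching the kernels. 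You instead re-derive all four identities from scratch: reciprocity of the fundamental solution $G_k(\xbold,\ybold)=G_k(\ybold,\xbold)$ combined with Fubini gives $\Vk$, the fact that the kernels $\partial_{\nGamma(\ybold)}G_k$ and $\partial_{\nGamma(\xbold)}G_k$ are mutual transposes gives $\Kk$ and $\Kprimek$, and the Maue-type representation gives $\Wk$; density plus the mapping properties \eqref{eq:smoothGamma} then upgrades the $L^2$-level computation to the fractional Sobolev pairings. Your route is self-contained and makes the source of the conjugations transparent, at the cost of having to invoke Maue's formula and a density argument; the paper's route is shorter but opaque on where the $\overline{\,\cdot\,}$'s come from. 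For \eqref{adjoint:Aprime}--\eqref{adjoint:Aprime:2} both arguments coincide: plug in $\Aprimek=\frac12+\Kprimek+ik\Vk$ and $\Bk+ik\Aprimek=-(\Wk+k^2\Vk)+ik(\Kk+\Kprimek)$, take adjoints term by term using $(\alpha K)^*=\overline{\alpha}K^*$, and collect the outer conjugate; your simplification of $\Bk+ik\Aprimek$ is correct.

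One small flaw in the write-up of the $\Vk$ computation: the displayed intermediate expression has a spurious conjugation. After Fubini and reciprocity you arrive at $\int_\Gamma\varphi(\ybold)\int_\Gamma G_k(\ybold,\xbold)\overline{\psi(\xbold)}\,ds(\xbold)\,ds(\ybold)$, i.e.\ $\int_\Gamma\varphi\,(\Vk\overline{\psi})\,ds$, \emph{without} an overline on the inner integral; you have written $\int_\Gamma\varphi\,\overline{\Vk\overline{\psi}}\,ds$, which equals $\langle\varphi,\Vk\overline{\psi}\rangle$, not $\langle\varphi,\overline{\Vk\overline{\psi}}\rangle$. To make the step read correctly, either drop that bar, or insert a double bar $\overline{\overline{\phantom{x}}}$ before identifying the inner integral, so that $\int\varphi\,(\Vk\overline{\psi})=\int\varphi\,\overline{\overline{\Vk\overline{\psi}}}=\langle\varphi,\overline{\Vk\overline{\psi}}\rangle$. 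The stated conclusion $\Vk^*\varphi=\overline{\Vk\overline{\varphi}}$ is nevertheless correct.
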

\begin{proof}
The identities in~\eqref{adjoint:of:blo:1} are proven in~\cite[equation (2.38)]{actaBEMhelmholtz}. We limit ourselves to prove here the first identity in~\eqref{adjoint:of:blo:2}, since the second one can be dealt with similarly to~\cite{actaBEMhelmholtz}.
For all~$\varphi \in H^{-\frac{1}{2}}(\Gamma)$ and~$\psi
\in H^{\frac{1}{2}}(\Gamma)$, we have
\[
  \langle \varphi, (\Kprimek)^*\psi\rangle=
  \overline{\langle \overline{\Kprimek\varphi},
    \overline{\psi}\rangle}
  \overset{\eqref{adjoint:of:blo:1}}{=}
  \overline{\langle \Kk^*\overline{\varphi},
    \overline{\psi}\rangle}
  =\overline{\langle \overline{\varphi},
    \Kk\overline{\psi}\rangle}
=\langle \varphi , \overline{\Kk\overline{\psi}}\rangle.
\]
As far as the proof of~\eqref{adjoint:Aprime} is concerned, recalling the definition of~$\Aprimek$ from~\eqref{definition:Bk:and:Akprime}, we have
\[
\begin{split}
   \langle  \varphi, (\Aprimek)^*\psi \rangle
   & = \langle \varphi,  (\frac{1}{2} + \Kprimek + i k \Vk)^* \psi \rangle = \langle \varphi  , \frac{1}{2} \psi + (\Kprimek)^* \psi- i k \Vk^* \psi \rangle\\
& \overset{\eqref{adjoint:of:blo:1},\eqref{adjoint:of:blo:2}}{=}
\langle \varphi , \frac{1}{2} \overline{\overline{\psi}} + \overline{\Kk \overline{\psi}} - i k \overline{\Vk {\overline{\psi}}}  \rangle  =
\langle \varphi, \overline{\frac{1}{2}\psi + \Kk \overline{\psi} + i k \Vk \overline{\psi}  }  \rangle,
\end{split}
\]
whence follows the claim. The proof of~\eqref{adjoint:Aprime:2} is dealt with similarly.
\end{proof}
The second technical result we need reads as follows.
\begin{lem} [Riesz representation]
\label{lemma:Laplace:Beltrami}
Let $s \in {\mathbb R}^+_0$ and $\sm \ge 1/2$, $\sv \ge 0$. 
Given~$\rm \in H^{s-\sm}(\Gamma)$ and~$\rext \in H^{s-\sv}(\Gamma)$, there 
exist~$\Rm\in H^{s+\sm}(\Gamma)$ and~$\Rext \in H^{s+\sv} (\Gamma)$ such that
\[
\Vert \Rm \Vert_{s+\sm, \Gamma} = \Vert \rm \Vert_{s-\sm, \Gamma},\quad \Vert \Rext \Vert_{s+\sv, \Gamma} = \Vert \rext \Vert_{s-\sv, \Gamma},
\]
and
\begin{equation} \label{regularity:Melenk:trick}
\begin{split}
&\langle \mu, \Rm \rangle= (\mu, \rm)_{-\sm, \Gamma} \quad \forall \mu \in H^{-\frac{1}{2}}(\Gamma),\\
&( \vext , \Rext ) _{0,\Gamma}= (\vext,\rext)_{-\sv, \Gamma} \quad \forall \vext \in L^2(\Gamma).
\end{split}
\end{equation}
\end{lem}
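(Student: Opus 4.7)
My plan is to prove the lemma by a direct construction using the Laplace--Beltrami eigenfunction expansion, which the paper has already set up as the working definition of the Sobolev spaces $H^\sigma(\Gamma)$.

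First I would write $\rm = \sum_{n} (\rm)_n \varphi_n$ and $\rext = \sum_n (\rext)_n \varphi_n$ in the $L^2(\Gamma)$-orthonormal basis of Laplace--Beltrami eigenfunctions $\{\varphi_n\}$ with eigenvalues $\{\lambda_n\}$. The hypotheses $\rm \in H^{s-\sm}(\Gamma)$ and $\rext \in H^{s-\sv}(\Gamma)$ translate, in view of the characterization through sequence spaces, to the summability conditions $\sum_n |(\rm)_n|^2 (1+\lambda_n)^{s-\sm} < \infty$ and $\sum_n |(\rext)_n|^2 (1+\lambda_n)^{s-\sv} < \infty$. I would then simply define
\[
\Rm := \sum_n (\rm)_n (1+\lambda_n)^{-\sm}\,\varphi_n,
\qquad
\Rext := \sum_n (\rext)_n (1+\lambda_n)^{-\sv}\,\varphi_n,
\]
that is, rescale the Fourier coefficients by the appropriate power of $(1+\lambda_n)$. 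Equivalently, in operator form, $\Rm = (I-\Delta_\Gamma)^{-\sm}\rm$ and $\Rext = (I-\Delta_\Gamma)^{-\sv}\rext$.

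Next I would verify the norm identities. By the series characterization of $\|\cdot\|_{s+\sm,\Gamma}$ one computes
\[
\|\Rm\|^2_{s+\sm,\Gamma} = \sum_n |(\rm)_n|^2 (1+\lambda_n)^{-2\sm}(1+\lambda_n)^{s+\sm} = \sum_n |(\rm)_n|^2 (1+\lambda_n)^{s-\sm} = \|\rm\|^2_{s-\sm,\Gamma},
\]
and the analogous identity for $\Rext$. In particular, $\Rm \in H^{s+\sm}(\Gamma)$ and $\Rext \in H^{s+\sv}(\Gamma)$. Since $s\ge 0$ and $\sm\ge \tfrac12$, we have $s+\sm \ge \tfrac12$, so $\Rm\in H^{1/2}(\Gamma)$ and the duality pairing $\langle\mu,\Rm\rangle$ with $\mu\in H^{-1/2}(\Gamma)$ is well defined. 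Similarly $s+\sv\ge 0$ ensures $\Rext\in L^2(\Gamma)$, making the $L^2$-pairing $(\vext,\Rext)_{0,\Gamma}$ admissible.

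Finally I would check the representation identities~\eqref{regularity:Melenk:trick} by a direct series calculation. Writing $\mu = \sum_n \mu_n \varphi_n$ and using~\eqref{eq:duality-pairing-Gamma} together with~\eqref{negative:Sobolev:sesquilinear:form},
\[
\langle \mu,\Rm\rangle = \sum_n \mu_n \overline{(\rm)_n}(1+\lambda_n)^{-\sm} = (\mu,\rm)_{-\sm,\Gamma},
\]
and analogously $(\vext,\Rext)_{0,\Gamma} = \sum_n (\vext)_n \overline{(\rext)_n}(1+\lambda_n)^{-\sv} = (\vext,\rext)_{-\sv,\Gamma}$. The construction, the norm identities, and the pairing identities are therefore all just the statement that the Sobolev inner products on $\Gamma$ are implemented by multiplying Fourier coefficients by powers of $(1+\lambda_n)$. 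There is no real obstacle here; the only care is to check that $\Rm$ and $\Rext$ land in spaces where the respective pairings on the left-hand sides of~\eqref{regularity:Melenk:trick} make sense, which is precisely what the hypotheses $\sm\ge\tfrac12$ and $\sv\ge 0$ guarantee.
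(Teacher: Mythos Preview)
Your proof is correct and follows essentially the same approach as the paper: both construct $\Rm$ and $\Rext$ explicitly by rescaling the Laplace--Beltrami Fourier coefficients by $(1+\lambda_n)^{-\sm}$ and $(1+\lambda_n)^{-\sv}$, then verify the norm and pairing identities directly from the series definitions~\eqref{eq:duality-pairing-Gamma} and~\eqref{negative:Sobolev:sesquilinear:form}. Your version is in fact slightly more detailed, since you treat $\Rext$ explicitly and spell out why the hypotheses $\sm\ge\tfrac12$, $\sv\ge 0$ ensure the pairings on the left of~\eqref{regularity:Melenk:trick} are well defined.
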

\begin{proof}
We show the assertion for~$\Rm$ only, since the case of~$\Rext$ can be proven analogously.
We construct~$\Rm$ in terms of the eigenpairs $\{ \varphi_n, \lambda_n  \}_{n \in \mathbb N}$ of the Laplace-Beltrami operator explicitly.
Recall that we identify elements of positive order Sobolev spaces~$H^t(\Gamma)$ and negative order Sobolev spaces~$H^{-t}(\Gamma)$ with sequence spaces, where, for~$t \ge 0$, the identification is simply the~$L^2(\Gamma)$-orthogonal expansion~$u = \sum_n u_n \varphi_n$. 
It is notationally convenient to realize the isomorphisms between the  spaces~$H^t(\Gamma)$ and  sequence spaces by simply writing~$u = \sum_{n} u_n \varphi_n$. Recall the realization of the duality pairing~\eqref{eq:duality-pairing-Gamma}
and the realization~\eqref{negative:Sobolev:sesquilinear:form} of the inner products.

Given $\rm\in H^{s-\sm}(\Gamma)$ we write it as  
\[ 
\rm = \sum_{n\in \mathbb N} (\rm)_n \varphi_n
\]
and define $\Rm$ by 
\[
\Rm =\sum_{n\in \mathbb N} (\Rm)_n  \varphi_n := \sum_{n\in \mathbb N} (\rm)_n (1+\lambda_{n})^{-\sm} \varphi_n.
\]
We have
\[
\Vert \Rm \Vert^2_{s+\sm, \Gamma} 
\stackrel{\eqref{negative:Sobolev:sesquilinear:form}}{=} 
\sum_{n\in \mathbb N} \vert (\rm)_n\vert^2(1+\lambda_n)^{s-\sm}
\stackrel{\eqref{negative:Sobolev:sesquilinear:form}}{=} 
\Vert \rm \Vert^2_{s-\sm, \Gamma},
\]
which entails~$\Rm\in H^{s+\sm}(\Gamma)$.
Moreover, for any~$\mu \in H^{-\sm}(\Gamma)$, which we express as 
$\mu=\sum_{n\in \mathbb N} \mu_n \varphi_n$, we get from~\eqref{eq:duality-pairing-Gamma}
\[
\langle \mu, \Rm \rangle = \sum_{n\in \mathbb N} \mu_n \overline{(\rm)_n} (1+\lambda_n)^{-\sm} = (\mu, \rm)_{-\sm, \Gamma}.
\qedhere
\]
\end{proof}
\subsubsection{A shift theorem for the dual problem}
In order to study the regularity of the solutions to~\eqref{dual:problem:2}, we rewrite the dual problem in an equivalent formulation by using Lemmata~\ref{lemma:on:adjoint:operators} and~\ref{lemma:Laplace:Beltrami}.
\begin{lem} \label{lemma:rewriting:dual:problem:strong:formulation}
Let $\sm \ge 1/2$ and $\sv \ge 0$. 
Let $(\rr, \rm, \rext )\in L^2(\Omega) \times H^{-\sm}(\Gamma) \times H^{-\sv}(\Gamma)$ and
let~$\Rm$ and~$\Rext$ be the representers of $\rm$ and $\rext$, respectively, constructed 
in Lemma~\ref{lemma:Laplace:Beltrami}.  
Then, problem~\eqref{dual:problem:1} is equivalent to the three following coupled problems:
find~$(\psi, \psim, \psitilde) \in H^1(\Omega) \times
H^{-\frac{1}{2}}(\Gamma) \times H^{\frac{1}{2}}(\Gamma)$ such that, in
strong form,
\begin{align}
&\begin{cases}
-\div(\Ad \nabla \overline  \psi ) - (\k)^2 \overline \psi =
\overline r  \quad \mbox{in }\Omega,\\
\nabla \overline\psi \cdot \nGamma + i k \overline\psi + \overline{\psim} = 0 \quad \mbox{on } \Gamma,\\
\end{cases}\label{dual:problem:first:equation:strong}\\
& \begin{cases}  -\overline \psi + (\frac{1}{2} + \Kk +i k \Vk) \overline{\psitilde} + \Vk \overline{\psim} = \overline \Rm\quad \mbox{on } \Gamma, \end{cases}\label{dual:problem:second:equation:strong} \\
& \begin{cases} (\Wk + i k (\frac{1}{2} - \Kprimek) - i k (\frac{1}{2} + \Kk + i k\Vk)) \overline{\psitilde} - ( (\frac{1}{2} + \Kprimek) + i k \Vk)  \overline{\psim} = \overline{\Rext} \quad \mbox{on } \Gamma.
\end{cases}\label{dual:problem:third:equation:strong:3}
\end{align}
\end{lem}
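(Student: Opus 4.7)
The plan is to specialize the test tuple $(v,\lambda,\vtilde)$ in the weak formulation~\eqref{dual:problem:2} by successively choosing $(v,0,0)$, $(0,\lambda,0)$, and $(0,0,\vtilde)$, thereby decoupling the identity into three independent equations, and then to translate each one into strong form using integration by parts and the adjoint formulas of Lemma~\ref{lemma:on:adjoint:operators} and the Riesz representation of Lemma~\ref{lemma:Laplace:Beltrami}. Throughout, one has to keep track of complex conjugates, since the duality pairing is antilinear in the second slot.

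For the first equation, setting $\lambda=0$ and $\vtilde=0$ reduces the weak form to
$(\Ad \nabla v, \nabla \psi)_{0,\Omega} - ((kn)^2 v,\psi)_{0,\Omega} + ik(v,\psi)_{0,\Gamma} + \langle v,\psim\rangle = (v,r)_{0,\Omega}$.
Testing first with $v\in H^1_0(\Omega)$ and using that $\Ad$ is real, integration by parts gives $(v,-\div(\Ad\nabla\psi) - \overline{(kn)^2}\psi)_{0,\Omega} = (v,r)_{0,\Omega}$, which, after complex conjugation, yields the PDE in~\eqref{dual:problem:first:equation:strong}. Then, allowing $v\in H^1(\Omega)$ with an arbitrary trace, the interior contribution cancels by virtue of the PDE; collecting the boundary pieces and again conjugating produces the impedance-type boundary condition $\partial_{\nGamma}\overline{\psi} + ik\overline{\psi} + \overline{\psim}=0$.

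For the second equation, the choice $(0,\lambda,0)$ gives
$-\langle \lambda,\psi\rangle + \langle \Aprimek\lambda,\psitilde\rangle + \langle \Vk\lambda,\psim\rangle = (\lambda,\rm)_{-\sm,\Gamma}$.
Rewriting the right-hand side as $\langle \lambda,\Rm\rangle$ via Lemma~\ref{lemma:Laplace:Beltrami} and moving the operators to their adjoints yields $\langle \lambda, -\psi + (\Aprimek)^*\psitilde + \Vk^*\psim - \Rm\rangle = 0$ for all $\lambda\in H^{-\frac{1}{2}}(\Gamma)$, hence $-\psi + (\Aprimek)^*\psitilde + \Vk^*\psim = \Rm$. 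Substituting the conjugation identities~\eqref{adjoint:of:blo:1} and~\eqref{adjoint:Aprime} and then taking the overall complex conjugate delivers~\eqref{dual:problem:second:equation:strong}.

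For the third equation, the choice $(0,0,\vtilde)$ reduces the weak form to
$-\langle (\Bk + ik\Aprimek)\vtilde,\psitilde\rangle - \langle (\tfrac{1}{2} + \Kk + ik\Vk)\vtilde,\psim\rangle = (\vtilde,\rext)_{-\sv,\Gamma}$.
Rewriting the right-hand side through Lemma~\ref{lemma:Laplace:Beltrami} as $\langle \vtilde,\Rext\rangle$ and passing to adjoints gives the operator identity $-(\Bk + ik\Aprimek)^*\psitilde - (\tfrac{1}{2} + \Kk + ik\Vk)^*\psim = \Rext$. Expanding the adjoints via~\eqref{adjoint:of:blo:1}, \eqref{adjoint:of:blo:2}, and~\eqref{adjoint:Aprime:2}, then conjugating and using the algebraic simplification $ik(\tfrac{1}{2} - \Kprimek) - ik(\tfrac{1}{2} + \Kk + ik\Vk) = k^2\Vk - ik(\Kprimek + \Kk)$ recovers~\eqref{dual:problem:third:equation:strong:3}. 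The converse (strong $\Rightarrow$ weak) is immediate by reversing the integration by parts and collecting the three identities back into the original sesquilinear form.

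The only real obstacle is bookkeeping: the antilinearity of $\langle\cdot,\cdot\rangle$ in its second slot forces the systematic use of the conjugation identities of Lemma~\ref{lemma:on:adjoint:operators} and the appearance of $\overline{\psi}$, $\overline{\psim}$, $\overline{\psitilde}$ in the strong formulation. No genuinely analytic difficulty arises, since the Riesz representers $\Rm,\Rext$ already exist thanks to Lemma~\ref{lemma:Laplace:Beltrami} and the hypothesis $\sm\ge \tfrac12$, $\sv\ge 0$ is exactly what is needed to apply that lemma with test functions in $H^{-\frac{1}{2}}(\Gamma)$ and $L^2(\Gamma)\subset H^{-\sv}(\Gamma)$, respectively.
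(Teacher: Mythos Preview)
Your proposal is correct and follows essentially the same approach as the paper: specialize the test triple to $(v,0,0)$, $(0,\lambda,0)$, and $(0,0,\vtilde)$, apply the adjoint identities of Lemma~\ref{lemma:on:adjoint:operators} and the Riesz representations of Lemma~\ref{lemma:Laplace:Beltrami}, and conjugate to arrive at the three strong equations. You are in fact slightly more thorough than the paper, since you explicitly mention the converse direction and the role of the hypotheses $\sm\ge\tfrac12$, $\sv\ge0$ in making the Riesz representers available for the relevant test classes.
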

\begin{proof}
By selecting~$\lambda=0$ and~$\vtilde=0$ in~\eqref{dual:problem:2}, we get
\[
\begin{cases}
\text{find } ( \psi,\psim) \in H^1(\Omega)\times H^{-\frac{1}{2}}(\Gamma) \text{ such that}\\
( \Ad \nabla v, \nabla \psi)_{0,\Omega} - ((\k)^2 v, \psi)_{0,\Omega} + i k (v, \psi)_{0,\Gamma} + \langle v,\psim\rangle =  (v,\rr)_{0,\Omega} \quad \forall \psi \in H^1(\Omega),\\
\end{cases}
\]
which entails~\eqref{dual:problem:first:equation:strong} after an integration by parts.
\medskip

Next, by choosing~$v=0$ and~$\vtilde=0$ in~\eqref{dual:problem:2} and using~\eqref{regularity:Melenk:trick}, we obtain
\[
\begin{cases}
\text{find }  (\psi,\psim,\psitilde) \in H^1(\Omega)\times H^{-\frac{1}{2}}(\Gamma) \times H^{\frac{1}{2}}(\Gamma)   \text{ such that}\\
- \langle \lambda, \psi \rangle + \langle \Aprimek \lambda, \psitilde \rangle + \langle  \Vk \lambda, \psim \rangle = (\lambda, \rm)_{-\sm, \Gamma} =  \langle \lambda, \Rm \rangle \quad \forall \lambda\in H^{-\frac{1}{2}}(\Gamma).
\end{cases}
\]
In order to get~\eqref{dual:problem:second:equation:strong}, it is enough to use the definition of the adjoint of an operator, and to apply the 
identities~\eqref{adjoint:of:blo:1} and~\eqref{adjoint:Aprime} when dealing with~$\Vk$ and~$(\Ak')^*\psi$, respectively.
\medskip

Finally, we observe that, by taking~$v=0$ and~$\lambda=0$ and by using~\eqref{regularity:Melenk:trick},
\[
\begin{cases}
\text{find }  (\psi,\psim,\psitilde) \in H^1(\Omega)\times H^{-\frac{1}{2}}(\Gamma) \times H^{\frac{1}{2}}(\Gamma)   \text{ such that}\\
- \langle  ( \Bk + i k  \Aprimek) \vtilde, \psitilde \rangle  - \langle (\frac{1}{2} + \Kk) \vtilde + i k \Vk\vtilde, \psim \rangle \\
\quad\quad\quad\quad\quad\quad\quad\quad = (\vext,\rext)_{-\sv,\Gamma}   = (\vtilde, \Rext)_{0,\Gamma} \quad \forall \psiext \in H^{\frac{1}{2}} (\Gamma).
\end{cases}
\]
By using the definition of the adjoint of an operator and the identities~\eqref{adjoint:of:blo:1} and~\eqref{adjoint:Aprime:2}, we rewrite the previous equation as
\[
\langle \vext, \overline{(\Wk  + k^2 \Vk)\overline{\psiext}} + i k \overline{(\Kprimek + \Kk ) \overline{\psiext}} \rangle - \langle \vext, \frac{1}{2}\overline{\overline{\psim}} + \overline{ \Kprimek \overline{\psim}} - i k \overline{\Vk \overline{\psim}})\rangle = \langle \vext, \Rext \rangle,
\]
whence follows~\eqref{dual:problem:third:equation:strong:3}.
\end{proof}

We conclude this section by proving that, assuming some smoothness of the coefficients of the differential operator and smoothness of the interface~$\Gamma$,
then the solution operator to the dual problem 
\eqref{dual:problem:first:equation:strong}--\eqref{dual:problem:third:equation:strong:3} satisfies a shift theorem.

\begin{thm} \label{theorem:regularity:duality-new}
Let~$\Gamma$ be smooth and assume that~$\Ad$ is smooth, and that $\Ad$
and $n$ satisfy~\eqref{eq:assumption-on-a}. Assume~\eqref{assumption:uniqueness}.
Then: 
\begin{enumerate}[(i)]
\item 
\label{item:theorem regularity duality-i-new}
Fix~$s\in \mathbb R^+_{0}$ and let 
the refraction index~$n \in \mathcal C^{\infty}(\mathbb R^3, \mathbb C)$. 
Let, for 
\[
\rr \in H^{s}(\Omega),\quad\quad \Rm \in H^{\frac{3}{2}+s}(\Gamma), \quad\quad \Rext \in H^{\frac{1}{2}+s}(\Gamma),
\]
the triple $(\psi,\psim, \psitilde)$ be the solution to~\eqref{dual:problem:first:equation:strong}--\eqref{dual:problem:third:equation:strong:3}. Then~$(\psi,\psim, \psitilde)$ satisfy
\[
\psi \in H^{s+2}(\Omega), \quad\quad \psim \in H^{s+\frac{1}{2}}(\Gamma), \quad\quad \psitilde \in H^{s+\frac{3}{2}}(\Gamma),
\]
together with the {\sl a priori} estimates
\begin{equation}\label{bounds:norms:solutions:dual-new}
\Vert \psi \Vert_{s+2,\Omega} + 
 \Vert \psim \Vert_{s+\frac{1}{2}, \Gamma} + 
 \Vert \psitilde \Vert_{s+\frac{3}{2}, \Gamma} 
   \lesssim
   \left(\Vert \rr \Vert_{s,\Omega} + \Vert \Rm \Vert_{s + \frac{3}{2}, \Gamma} + \Vert \Rext \Vert_{s+\frac{1}{2}, \Gamma} \right),
\end{equation}
where the hidden constant depends on $k$, $\Omega$, $\Ad$, and $n$.
\item 
\label{item:theorem:regularity:duality-ii-new}
If the refraction index~$n$ is in~$L^\infty(\Omega, \mathbb C)$ on~$\Omega$, then the bounds in~\eqref{bounds:norms:solutions:dual-new} hold true with~$s=0$.
\end{enumerate}
\end{thm}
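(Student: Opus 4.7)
The plan is to first settle base-regularity well-posedness of the dual problem by Fredholm theory (giving part (ii)), and then to bootstrap to part (i) by alternating interior elliptic regularity with the shift properties of the boundary integral operators. For part (ii): the Gårding inequality (Theorem~\ref{theorem:Garding:inequality}), the uniqueness hypothesis~\eqref{assumption:uniqueness}, and Fredholm theory imply, exactly as in Theorem~\ref{theorem:well-posedness:of:weak:mortar:formulation}, that $\T$ and its adjoint are isomorphisms on $X := H^1(\Omega) \times H^{-1/2}(\Gamma) \times H^{1/2}(\Gamma)$. Expressing the right-hand side $\Fcal$ of~\eqref{dual:problem:1} through the Riesz representers $\Rm \in H^{3/2}(\Gamma)$ and $\Rext \in H^{1/2}(\Gamma)$ constructed in Lemma~\ref{lemma:Laplace:Beltrami} bounds $|\Fcal(v,\lambda,\vtilde)|$ by $(\|r\|_{0,\Omega}+\|\Rm\|_{3/2,\Gamma}+\|\Rext\|_{1/2,\Gamma})\|(v,\lambda,\vtilde)\|_X$, so invertibility of $\T^*$ delivers~\eqref{bounds:norms:solutions:dual-new} with $s=0$.

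For part (i) I would work with the equivalent strong formulation of Lemma~\ref{lemma:rewriting:dual:problem:strong:formulation} and bootstrap in small increments. Equation~\eqref{dual:problem:first:equation:strong} is a smooth-coefficient elliptic boundary value problem for $\overline{\psi}$ on $\Omega$ with Robin datum $-\overline{\psim}$; classical elliptic regularity (e.g.~\cite[Thm.~4.18]{mclean2000strongly}) yields
\[
\|\psi\|_{s+2,\Omega} \lesssim \|r\|_{s,\Omega} + \|\psim\|_{s+1/2,\Gamma} + \|\psi\|_{0,\Omega},
\]
and in particular $\|\psi|_\Gamma\|_{s+3/2,\Gamma} \lesssim \|\psi\|_{s+2,\Omega}$ by the trace theorem. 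Complementarily, equations~\eqref{dual:problem:second:equation:strong}--\eqref{dual:problem:third:equation:strong:3} form a $2\times 2$ boundary system in $(\overline{\psim},\overline{\psitilde})$ with right-hand side $(\overline{\Rm}+\overline{\psi|_\Gamma},\overline{\Rext})$. Decomposing each boundary operator as $\mathcal{O}_k = \mathcal{O}_0 + (\mathcal{O}_k - \mathcal{O}_0)$, the mapping properties~\eqref{eq:smoothGamma} show that the $k=0$ principal block maps $H^{s-1/2}(\Gamma) \times H^{s+1/2}(\Gamma)$ to $H^{s+1/2}(\Gamma) \times H^{s-1/2}(\Gamma)$ boundedly for every $s$, while the $k$-dependent remainder is smoothing by~\eqref{compact:part}. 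Packaging interior and boundary together, the full dual operator $\mathcal{L}^* \colon H^{s+2}(\Omega) \times H^{s+1/2}(\Gamma) \times H^{s+3/2}(\Gamma) \to H^s(\Omega) \times H^{s+3/2}(\Gamma) \times H^{s+1/2}(\Gamma)$ is bounded, is its principal part plus a compact perturbation, and therefore Fredholm of index zero on the shifted scale; injectivity is inherited from part (ii), since any higher-regularity solution is a fortiori a base solution and is unique. Hence $\mathcal{L}^*$ is an isomorphism and~\eqref{bounds:norms:solutions:dual-new} follows by induction on $s$.

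The main obstacle is verifying that the $k=0$ principal operator is indeed an isomorphism between the shifted spaces: the $2\times 2$ boundary block alone (with entries $\Vz,\,\tfrac12+\Kz,\,-(\tfrac12+\Kprimez),\,\Wz$) has a one-dimensional kernel arising from the Steklov-Poincaré null space of constants, and this kernel is removed only after coupling with the Neumann-compatible interior problem~\eqref{dual:problem:first:equation:strong} at $k=0$. The bookkeeping therefore has to be carried out on the full three-component operator rather than on the boundary block in isolation, carefully matching the asymmetric indices of $\psim\in H^{s+1/2}$ and $\psitilde\in H^{s+3/2}$ against the target spaces $H^{s+3/2}$ and $H^{s+1/2}$ of the two boundary equations and against the smoothing gains provided by~\eqref{compact:part} for the $k$-dependent remainders.
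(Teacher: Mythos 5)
Your two-stage plan (Fredholm at base regularity for part (ii), then bootstrap on the shifted scale for part (i)) is a genuine alternative to the paper's strategy, but it leaves the crucial step unproven and contains one inaccuracy.

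The gap is in the claim that ``the full dual operator $\mathcal L^*$ \dots\ is its principal part plus a compact perturbation, and therefore Fredholm of index zero on the shifted scale.'' For this conclusion you need the $k=0$ principal operator to be Fredholm of index zero between the \emph{shifted} spaces $H^{s+2}(\Omega)\times H^{s+\frac12}(\Gamma)\times H^{s+\frac32}(\Gamma)$ and the corresponding target spaces, and you explicitly flag this as ``the main obstacle''—but you never discharge it. Moreover, your resolution of the kernel issue is incorrect: at $k=0$ the triple $(\psi,\psim,\psitilde)=(0,0,c)$ for constant $c$ still lies in the kernel of the \emph{coupled} system, since $\partial_{\nGamma}0+0=0$, $(\tfrac12+\Kz)c=0$ (because $\Kz 1=-\tfrac12$), and $\Wz c=0$. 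The interior equation adds nothing that removes this null vector, so the principal operator is not an isomorphism; one would instead have to argue directly about its Fredholm index on each shifted level, which is not done. Because the compact-perturbation step hangs on precisely this unverified claim, the bootstrap does not close.

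The paper sidesteps the scale-Fredholm argument entirely. It introduces the auxiliary potentials $\aaleph:=\Ktildek\overline{\psitilde}+ik\Vtildek\overline{\psitilde}$, $\bbeth:=\Vtildek\overline{\psim}$, $\ddaleth:=\aaleph+\bbeth$, shows $\ddaleth$ solves an interior impedance problem~\eqref{problem:solved:by:z} (so classical elliptic regularity gives $\ddaleth\in H^{s+2}(\Omega)$), and then glues $\Lcalk=\overline\psi+\chi\ddaleth$ on $\Omega$ with $\ddaleth$ on $\Omegap$ to obtain a \emph{transmission problem}~\eqref{contazzi:malefici}, \eqref{jump:Lcalk}, \eqref{jump:partial_n-Lcalk} for which a clean shift theorem (Step~6) is available. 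This replaces the missing Fredholm-on-the-scale step by a concrete reduction to a standard elliptic transmission problem, with the regularity of $(\psi,\psim,\psitilde)$ then recovered from $\Lcalk$ in Step~5. If you want to pursue your approach, you would have to actually establish that the full three-component $k=0$ operator is Fredholm of index zero on each shifted level (e.g.\ via a parametrix for the coupled pseudodifferential system, properly accounting for the one-dimensional kernel and its cokernel), which is roughly equivalent in effort to the paper's transfer-potential construction.
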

\begin{proof}
\emph{Proof of~\eqref{item:theorem regularity duality-i-new}:}
The proof identifies a potential $\ddaleth$ (which depends on 
$\psiext$, $\psim$) such that the function $\Lcalk$ defined
in~\eqref{Lcalk} below satisfies an elliptic transmission problem, for which
a shift theorem is available. The regularity of $\Lcalk$ then will allow us
to infer the regularity of $\psi$, $\psiext$, and $\psim$.   

We will only consider the case of integer $s \in {\mathbb N}_0$, as the 
general case is then obtained by interpolation. 

\emph{1.~step (a priori estimate):} 
By Theorem~\ref{theorem:well-posedness:of:weak:mortar:formulation}, the sesquilinear form 
${\mathcal T}$ satisfies an inf-sup condition, so that we have the ($k$-dependent)  
{\sl a priori} bound:
\begin{equation} \label{a:priori:bound}
\vert \psi \vert_{1,\Omega} + \Vert \psim \Vert_{-\frac{1}{2}, \Gamma} + \Vert \psiext \Vert_{\frac{1}{2},\Gamma}   \lesssim \Vert  r \Vert_{\left(H^{1}(\Omega)\right)^\prime} 
+ \Vert {\Rm} \Vert_{\frac{1}{2},\Gamma} + \Vert {\Rext} \Vert_{-\frac{1}{2},\Gamma}.
\end{equation}
\emph{2.~step (the potentials $\aaleph$ and $\bbeth$):} 
We introduce the two following potentials:
\begin{equation} \label{potentials:for:some:psi:objects}
\aaleph := \Ktildek \overline {\psitilde} + i k\Vtildek \overline{\psitilde}, \quad \quad \quad \bbeth := \Vtildek \overline{\psim}.
\end{equation}
By noting that
\begin{equation} \label{Neumann:lambda}
\begin{split}
&\gammaoint \bbeth = (\frac{1}{2} + \Kprimek) \overline{\psim},\quad\quad \gammaoext \bbeth = (-\frac{1}{2} + \Kprimek) \overline{\psim},\quad\quad \gammazint \bbeth = \gammazext \bbeth = \Vk \overline{\psim},
\end{split}
\end{equation}
we also have
\begin{equation} \label{Neumann_vprime}
\gammaoint \aaleph = -\Wk \overline{\psitilde} + ik (\frac{1}{2} + \Kprimek) \overline {\psitilde}, \quad\quad\quad \gammaoext \aaleph = -\Wk \overline{\psitilde} + i k (-\frac{1}{2} + \Kprimek) \overline {\psitilde},
\end{equation}
and
\begin{equation} \label{Dirichlet:vprime}
\gammazint \aaleph = (-\frac{1}{2} + \Kk) \overline{\psitilde} + i k \Vk \overline{\psitilde}, \quad\quad\quad \gammazext \aaleph = (\frac{1}{2} + \Kk) \overline{\psitilde} + i k \Vk \overline{\psitilde}.
\end{equation}
By using~\eqref{Neumann:lambda}, \eqref{Neumann_vprime} and~\eqref{Dirichlet:vprime}, we can rewrite~\eqref{dual:problem:third:equation:strong:3} in terms of the two auxiliary potentials~$\aaleph$ and~$\bbeth$:
\begin{equation} \label{couple:of:equations:related:to:the:third:1}
-\gammaoext \aaleph - i k \gammazext \aaleph - \gammaoint \bbeth - i k \gammazint \bbeth =  \overline {\Rext}.
\end{equation}
Since~\eqref{Neumann_vprime} and~\eqref{Dirichlet:vprime} imply that
\[
- \gammaoext \aaleph - i k \gammazext \aaleph = - \gammaoint \aaleph - i k \gammazint \aaleph,
\]
we deduce
\begin{align} 
& - \gammaoint \aaleph - i k \gammazint \aaleph - \gammaoint \bbeth  - i k \gammazint \bbeth =  \overline{\Rext} .\label{couple:of:equations:related:to:the:third:2}
\end{align}
It is also possible to reshape~\eqref{dual:problem:second:equation:strong} as
\begin{equation} \label{dual:problem:second:equation:strong:rewritten}
-\overline{\psi}  +  \gammazext \aaleph+ \gammazext \bbeth  = \overline{\Rm}.
\end{equation}
\medskip
\emph{3.~step (the potential $\ddaleth$):} 
Introduce the combined potential
\begin{equation} \label{definition:of:z}
\ddaleth := \aaleph + \bbeth,
\end{equation}
where $\aaleph$ and $\bbeth$ are defined in~\eqref{potentials:for:some:psi:objects}.
For future use, we record from~\ref{Neumann:lambda}, \eqref{Dirichlet:vprime} the jump relation 
\[
\llbracket \ddaleth \rrbracket_\Gamma = -\overline{\psiext}. 
\]
In view of the mapping properties of the 
double and single layer potentials, see, e.g., \cite{steinbach_BEMbook , SauterSchwab_BEMbook},
\begin{equation} \label{mapping:properties:potentials}
\Ktildek: H^{s'+\frac{1}{2}}(\Gamma) \rightarrow H^{s'+1}(\Omega \cup (\Omega^+\cap B_R(0))), 
\quad\quad \Vtildek : H^{s'-\frac{1}{2}}(\Gamma) \rightarrow H^{s'+1}(\Omega \cup (\Omega^+\cap B_R(0))),
\end{equation}
for any fixed $R > 0$, we deduce for $s' \ge 0$ such that the right-hand
side is finite that 
\[
\|\ddaleth \|_{H^{1+s'}(\Omega \cup (\Omega^+\cap B_R(0)))} \lesssim 
\|\psim \|_{s'-\frac{1}{2},\Gamma} + \|\psiext\|_{s'+\frac{1}{2},\Gamma}.   
\]
In particular, for $s' = 0$, we have in view of~\eqref{a:priori:bound}
\begin{equation}
\label{eq:ddaleth-H1}
\|\ddaleth \|_{H^{1}(\Omega \cup (\Omega^+\cap \B_R(0)))} \lesssim 
\|r\|_{\left(H^1(\Omega)\right)^\prime} + \|\Rm\|_{\frac{1}{2},\Gamma} + \|\Rext\|_{-\frac{1}{2},\Gamma}. 
\end{equation}
Thanks to~\eqref{couple:of:equations:related:to:the:third:2}, \eqref{couple:of:equations:related:to:the:third:1}, and~\eqref{Neumann:lambda}, $\ddaleth$ satisfies
\begin{equation} \label{problem:solved:by:z}
\begin{cases}
-\Delta \ddaleth - k^2 \ddaleth = 0 & \text{in } \mathbb R^3\setminus \Gamma\\
\gammaoint \ddaleth + i k \gammazint \ddaleth  =  -  \overline{\Rext} & \text{on } \Gamma \\
\gammaoext \ddaleth + i k \gammazext \ddaleth = - \overline{\Rext} - \overline{\psim} & \text{on } \Gamma.\\
\end{cases}
\end{equation}
Regularity of $\ddaleth$ on $\Omega$ and $\Omega^+$ beyond~\eqref{eq:ddaleth-H1} can be inferred from~\eqref{problem:solved:by:z} by elliptic regularity. 
Indeed, since $\ddaleth{}_{|\Omega}$ satisfies an elliptic equation with impedance boundary conditions, 
we get from the smoothness of $\Gamma$ and $\Rext \in H^{s+\frac{1}{2}}(\Gamma)$
that~$\ddaleth \in H^{s+2}(\Omega)$; see, e.g., \cite[Theorem 6, Section 6.3]{evansPDE} 
or \cite[Lemma~{6.5}]{MelenkParsaniaSauter_generalDGHelmoltz}.
Taking the trace on $\Gamma$ we get for $\ddalethint := \gammazint (\ddaleth)$ 
\begin{equation}
\label{eq:ddalethint}
\|\ddaleth\|_{s+2,\Omega} + \|\ddalethint\|_{s+\frac{3}{2},\Gamma} \lesssim \|\Rext\|_{s+\frac{1}{2},\Gamma}. 
\end{equation}
Combining \eqref{dual:problem:second:equation:strong:rewritten} and \eqref{definition:of:z}, we also have
\[
\ddalethext:= \gammazext (\ddaleth) =  \overline{\Rm} +\overline \psi
\]
As~$\overline {\psi}_{|\Gamma} = \gammazint(\overline \psi)$ we arrive at 
\begin{equation} \label{combining:first:and:second:properties}
\gammazint(\overline \psi + \ddaleth) - \gammazext (\ddaleth) = \ddalethint - \overline{\Rm} 
\in H^{s+ \frac{3}{2}}(\Gamma). 
\end{equation}
\emph{4.~step (the function $\Lcalk$ as the solution of a transmission problem):}
We introduce the function~$\Lcalk$ 
in~$\mathbb R^3 \setminus \Gamma$, which will be see to satisfy 
the transmission problem defined by~\eqref{contazzi:malefici}, \eqref{jump:Lcalk}, \eqref{jump:partial_n-Lcalk} below:

\begin{equation} \label{Lcalk}
\Lcalk =
\begin{cases}
\overline \psi + \chi \ddaleth 	& \text{in } \Omega,\\
\ddaleth					& \text{in } \Omegap,\\
\end{cases}
\end{equation}
where~$\chi$ is a smooth cut-off function such that~$\chi = 1$ in~$\mathcal N(\Gamma)$, which we recall denotes a sufficiently small neighborhood of~$\Gamma$, such that~$\Ad$ and~$n$ are equal to~$1$ on the support of~$\chi$.

The jump relation 
\begin{equation} \label{jump:Lcalk}
\llbracket \Lcalk \rrbracket_\Gamma = \ddalethint - \overline{\Rm} 
\in H^{s+3/2}(\Gamma), 
\end{equation}
is an immediate consequence of \eqref{combining:first:and:second:properties}. Moreover, we observe that
\begin{equation} \label{jump:partial_n-Lcalk}
\begin{split}
\llbracket \partial_{\nGamma} \Lcalk \rrbracket_\Gamma 	& = \gammaoint( \overline {\psi}) + \llbracket \partial_{\nGamma} \ddaleth \rrbracket_\Gamma = \gammaoint(\overline {\psi}) + \gammaoint \aaleph + \gammaoint \bbeth - \gammaoext \aaleph - \gammaoext \bbeth\\
								& \overset{\eqref{Neumann:lambda},\,\eqref{Neumann_vprime}}{=} \gammaoint( \overline \psi) + i k \overline{\psitilde} + \overline{\psim} \overset{\eqref{dual:problem:first:equation:strong}}{=} i k (\overline{\psiext} - \overline \psi).
\end{split}
\end{equation}
Next, we write an equation solved by~$\Lcalk$ in~$\Omega$:
\[
-\div(\Ad \nabla \Lcalk) \overset{\eqref{Lcalk}}{=} -\div(\Ad \nabla \overline \psi) - \div( \Ad \nabla (\chi \ddaleth)) \overset{\eqref{dual:problem:first:equation:strong}}{=} (\k)^2 \overline \psi + \overline \rr - \left\{ \nabla \Ad \cdot \nabla (\chi \ddaleth) + \Ad \Delta (\chi \ddaleth)    \right\}.
\]
We study the two terms in the curly braces $\{\cdots\}$ on the right-hand side separately.
The first one belongs to~$H^{s+1}(\Omega)$, since we are assuming smoothness of~$\Ad$ and we have that~$\ddaleth \in H^{s+2}(\Omega)$.
As far as the second one is concerned, we note that
\[
\begin{split}
\Ad \Delta(\chi \ddaleth) 	& = \Ad \{ \Delta \chi \ddaleth 	+ 2 \nabla \chi \cdot \nabla \ddaleth + \chi \Delta \ddaleth  \} \overset{\eqref{problem:solved:by:z}}{=} \Ad \{ \underbrace{\Delta \chi \ddaleth}_{\in H^{s+2}(\Omega)}	+ \underbrace{2 \nabla \chi \cdot \nabla \ddaleth}_{\in H^{s+1}(\Omega)} 
																							- \underbrace{k^2 \chi \ddaleth}_{\in H^{s+2} (\Omega)}  \},
\end{split}
\]
where we have used again that~$\ddaleth \in H^{s+2}(\Omega)$.

We infer that
\begin{equation} \label{contazzi:malefici}
\begin{split}
-&\div	(\Ad \nabla \Lcalk) 	 - (\k)^2 \Lcalk \\
	& = (\k)^2 \overline \psi + \overline {\rr} - (\k)^2 \overline \psi - (\k)^2 \chi \ddaleth - \left\{ \nabla \Ad \cdot \nabla (\chi \ddaleth) + \Ad   (\Delta \chi \ddaleth + 2\nabla\chi \cdot \nabla\ddaleth  + \chi \Delta \ddaleth)     \right\}\\
	& = \underbrace{\overline {\rr}}_{\in H^s(\Omega)} - \underbrace{(\k)^2 \chi \ddaleth}_{\in H^{s+2}(\Omega)} - \underbrace{\nabla \Ad \cdot \nabla (\chi \ddaleth)}_{\in H^{s+1}(\Omega)} 
					- \Ad \{  \underbrace{\Delta \chi \ddaleth}_{\in H^{s+2}(\Omega)} + \underbrace{2\nabla\chi \cdot \nabla\ddaleth}_{\in H^{s+1}(\Omega)}  - \underbrace{k^2 \chi \ddaleth }_{\in H^{s+2}(\Omega)}   \} \\
	& =: \RHSLcalk \in H^{s} (\Omega).
\end{split}
\end{equation}
\emph{5.~step (bootstrapping regularity):} 
In the following step~6, we will establish a shift theorem
for $\Lcalk$. The {\sl a priori} estimate \eqref{a:priori:bound} provides $\psi \in H^1(\Omega)$ and $\psiext\in H^{\frac{1}{2}}(\Gamma)$ 
so that $\llbracket \partial_{\n_\Gamma} \Lcalk \rrbracket_\Gamma 
\in H^{\frac{1}{2}}(\Gamma)$; cf.~\eqref{jump:partial_n-Lcalk}.
The shift theorem of the 6.~step then will provide
$\Lcalk \in H^{\min\{2,s+2\}}(\Omega \cup (\Omega^+ \cap B_R(0)))$ 
with  
\begin{equation}
\label{eq:iteration:Lcalk}
\|\Lcalk \|_{\min\{2,s+2\},\Omega} + 
\|\Lcalk \|_{\min\{2,s+2\},\Omega^+\cap B_R(0)} 
\lesssim \|\Rext\|_{s+\frac{1}{2},\Gamma} + \|\Rm\|_{s+\frac{3}{2},\Gamma} 
+ \|\psi\|_{1,\Omega} 
+ \|\psiext\|_{\frac{1}{2},\Gamma}. 
\end{equation}
The definition of $\Lcalk$ in~\eqref{Lcalk} and the estimate~\eqref{eq:ddalethint} provide $\psi \in H^{\min\{2,s+2\}}(\Omega)$ with 
\begin{equation}
\label{eq:iteration:psi}
\|\psi\|_{\min\{2,s+s\},\Omega} 
\lesssim \|\Rext\|_{s+\frac{1}{2},\Gamma} + \|\Rm\|_{s+\frac{3}{2},\Gamma} 
+ \|\psi\|_{1,\Omega} 
+ \|\psiext\|_{\frac{1}{2},\Gamma}. 
\end{equation}
Thanks to~\eqref{definition:of:z}, \eqref{Neumann:lambda}, and~\eqref{Dirichlet:vprime}, it holds
\begin{equation} \label{nice:property:jump:Dirichlet:Lcalk}
\llbracket \Lcalk \rrbracket_\Gamma \overset{\eqref{combining:first:and:second:properties}}{=} \gammazint(\ddaleth + \overline \psi) - \gammazext\ddaleth = \overline{\psi} - \overline{\psitilde} \quad \Longrightarrow \quad \overline{\psitilde} = \overline{\psi} - \llbracket \Lcalk \rrbracket_\Gamma .
\end{equation}
Inserting~\eqref{eq:iteration:Lcalk} and~\eqref{eq:iteration:psi} in~\eqref{nice:property:jump:Dirichlet:Lcalk} provides 
$\psiext \in H^{\min\{\frac{3}{2},s+\frac{3}{2}\}}(\Gamma)$. 
Hence, we get from~\eqref{jump:partial_n-Lcalk} that 
$\llbracket \partial_{\n_\Gamma} \Lcalk \rrbracket_\Gamma 
\in H^{\frac{3}{2}}(\Gamma)$.  
That is, we have improved the regularity
of $(\psi,\psiext)$ from $H^1(\Omega) \times H^{\frac{1}{2}}(\Gamma)$ 
to $H^2(\Omega) \times H^{\frac{3}{2}}(\Gamma)$ and in turn 
of 
$\llbracket \partial_{\n_\Gamma} \Lcalk \rrbracket_\Gamma $ from 
$H^{\frac{1}{2}}(\Gamma)$ to 
$H^{\frac{3}{2}}(\Gamma)$. The arguments can be repeated with this improved
regularity to infer 
$(\psi,\psiext) \in H^{3}(\Omega) \times H^{\frac{5}{2}}(\Gamma)$, 
$(\psi,\psiext) \in H^{4}(\Omega) \times H^{\frac{7}{2}}(\Gamma)$ 
until we have reached
$(\psi,\psiext) \in H^{s+2}(\Omega) \times H^{\frac{3}{2}+s}(\Gamma)$. 
Finally, equation~\eqref{dual:problem:second:equation:strong} yields~$\psim \in H^{\frac{1}{2}+s}(\Gamma)$. 

\noindent \emph{6.~step:} 
For $s' \ge 0$ let the scalar function $U$ satisfy the transmission problem
\[
\begin{split}
&-\div (\Ad \nabla U) - (\k)^2 U = 
\begin{cases} g \in H^{s'}(\Omega), & \mbox{in $\Omega$}, \\
              0                     & \mbox{ in $\Omega^+$}
\end{cases},\\
&\llbracket U\rrbracket_\Gamma = g_1 \in H^{s'+\frac{3}{2}}(\Gamma),  \quad \llbracket \partial_{\n_\Gamma}U\rrbracket_\Gamma = g_2 \in H^{s'+\frac{1}{2}}(\Gamma), 
\end{split}
\]
We claim that for fixed $R>0$
\[
\|U\|_{s'+2,\Omega} + \|U\|_{s'+2,\Omega^+\cap B_R(0)} \lesssim \|g\|_{s',\Omega} + \|g_1\|_{s'+\frac{3}{2},\Gamma} +  \|g_2\|_{s'+\frac{1}{2},\Gamma}.
\]
To see this shift theorem, we first remove the jumps across $\Gamma$. 
We define 
\[
\Pcal := - \Ktildek g_1  + \Vtildek g_2 + \Ntildek g .
\]
Using the smoothing properties of 
the double and single layer potentials given in~\eqref{mapping:properties:potentials},  
we deduce that
\begin{equation}
\label{eq:estimate-Pcal}
\|\Pcal \|_{s'+2,\Omega} + 
\|\Pcal \|_{s'+2,\Omega^+\cap B_R(0)} 
\lesssim \|g_1 \|_{s'+\frac{3}{2},\Gamma} + 
\|g_2 \|_{s'+\frac{1}{2},\Gamma} + \|g\|_{s',\Omega}. 
\end{equation}
The jump relations~\eqref{eq:jump-rel-V-1}, \eqref{eq:jump-rel-V-2}, \eqref{eq:jump-rel-K-1}, \eqref{eq:jump-rel-K-2} imply 
$\llbracket \Pcal \rrbracket_\Gamma = g_1$ and 
$\llbracket \partial_{\n_\Gamma} \Pcal \rrbracket_\Gamma = g_2$.   
Furthermore, we have 
\begin{equation}
\label{eq:Pcal}
-\Delta \Pcal - k^2 \Pcal = 
\begin{cases} g & \mbox{ on $\Omega$}, \\
              0 & \mbox{ on $\Omega^+$}. 
\end{cases}
\end{equation}
At this point, we define the additional function~$\Zcal$ as 
\[
\Zcal:= U  - \chi \Pcal 
\]
with~$\chi$ being the same cut-off function as the one used in~\eqref{Lcalk}.
By construction, we observe that
\[
\llbracket \Zcal \rrbracket_\Gamma = 0 = \llbracket \partial_{\nGamma} \Zcal \rrbracket_\Gamma. 
\]
A calculation reveals  on $\Omega$ and $\Omega^+$ 
\begin{align}
\nonumber 
\div (\Ad \nabla (\chi \Pcal))  + \k^2 \chi\Pcal & = 
\Ad \chi \Delta \Pcal + 
2 \Ad \nabla \chi \cdot \nabla \Pcal + \chi \nabla \Ad \cdot \nabla \Pcal 
+ \Ad \Delta \chi \Pcal + (\k)^2 \chi \Pcal \\
& 
=: \chi( \Ad \Delta \Pcal + (\k)^2 \Pcal) + \Rcal 
\label{eq:calculate-P}
\end{align}
with 
\begin{equation*}
\|\Rcal \|_{s'+1,\Omega} + 
\|\Rcal \|_{s'+1,\Omega^+\cap B_R(0)} 
\stackrel{\eqref{eq:estimate-Pcal}}{ \lesssim }
\|g_1 \|_{s'+\frac{3}{2},\Gamma} + 
\|g_2 \|_{s'+\frac{1}{2},\Gamma} + \|g\|_{s',\Omega} 
\end{equation*}
and $\Rcal \equiv 0$ on a neighborhood $\mathcal{N}^\prime(\Gamma)$ of 
$\Gamma$. 

Upon writing $\widetilde g$ for the zero extension of $g$ to ${\mathbb R}^3$ 
we get on $\Omega \cup \Omega^+$ 
from~\eqref{eq:calculate-P}, \eqref{eq:Pcal}
\begin{align}
\label{eq:Z}
- \div (\Ad \nabla (\chi \Zcal))  - \k^2 \chi\Zcal & = 
(1 - \Ad \chi) \widetilde g + \chi k^2 (n^2 - \Ad)\Pcal + \Rcal 
=:\Rcal^\prime. 
\end{align}
We note that $\Rcal$ vanishes near $\Gamma$ and 
$\|\Rcal^\prime \|_{s',\Omega} + 
\|\Rcal^\prime \|_{s',\Omega^+\cap B_R(0)} 
\lesssim 
\|g_1 \|_{s'+\frac{3}{2},\Gamma} + 
\|g_2 \|_{s'+\frac{1}{2},\Gamma} + \|g\|_{s',\Omega}$. 
In view of the jump conditions satisfied by $\Zcal$, the 
equation~\eqref{eq:Z} holds on ${\mathbb R}^3$. Standard elliptic 
regularity then gives, for any $R'< R$, that
$\Zcal \in H_{loc}^{s'+2}({\mathbb R}^3)$ and 
$
\|\Zcal\|_{s'+2,B_{R'}(0)} \lesssim \|\Rcal^\prime\|_{s', B_R(0)}, 
$
which leads to the desired claim. 


\emph{Proof of~\eqref{item:theorem:regularity:duality-ii-new}:}
The case of a refraction index in~$L^{\infty}$ follows along the same
lines as the smooth case, with the difference that the shift result
of Step~6 is only valid for $s' = 0$. 
Therefore, all the estimates are valid substituting~$s$ with~$0$.
\end{proof}

Finally, we prove the shift theorem for the adjoint variational problem~\eqref{dual:problem:2}.
\begin{thm} \label{theorem:regularity:duality}
Let~$\Gamma$ be smooth and assume that~$\Ad$ is smooth, and that $\Ad$
and $n$ satisfy~\eqref{eq:assumption-on-a}. Assume~\eqref{assumption:uniqueness}. Let $\sm \ge \frac{1}{2}$ and $\sv \ge 0$. 
Then: 
\begin{enumerate}[(i)]
\item 
\label{item:theorem regularity duality-i}
Let the refraction index~$n \in \mathcal C^{\infty}(\mathbb R^3, \mathbb C)$. 
Assume $s_1$, $s_2$, $s_3 \ge 0$, $s_2 \ge \frac{3}{2} - \sm$, and $s_3 \ge \frac{1}{2}-\sv$.
Set $s:=\min\{s_1, s_2-\frac{3}{2}+\sm, s_3-\frac{1}{2}+\sv\}\ge 0$. 
Let, for 
\[
\rr \in H^{s_1}(\Omega),\quad\quad \rm \in H^{-\sm+s_2}(\Gamma), \quad\quad \rext \in H^{-\sv+s_3}(\Gamma),
\]
the triple $(\psi,\psim, \psitilde)$ be the solution 
to~\eqref{dual:problem:2}. 
Then~$(\psi,\psim, \psitilde)$ satisfy
\[
\psi \in H^{s+2}(\Omega), \quad\quad \psim \in H^{s+\frac{1}{2}}(\Gamma), \quad\quad \psitilde \in H^{s+\frac{3}{2}}(\Gamma),
\]
together with the {\sl a priori} estimates
\begin{equation}\label{bounds:norms:solutions:dual}
\Vert \psi \Vert_{s+2,\Omega} + 
 \Vert \psim \Vert_{s+\frac{1}{2}, \Gamma} + 
 \Vert \psitilde \Vert_{s+\frac{3}{2}, \Gamma} 
\lesssim \left(\Vert \rr \Vert_{s_1,\Omega} + \Vert \rm \Vert_{-\sm + s_2, \Gamma} + \Vert \rext \Vert_{-\sv+s_3, \Gamma} \right),
\end{equation}
where the hidden constant depends on $k$, $\Omega$, $\Ad$, and $n$.
\item 
\label{item:theorem:regularity:duality-ii}
If the refraction index~$n$ is in~$L^\infty(\Omega, \mathbb C)$ on~$\Omega$, then the bounds in~\eqref{bounds:norms:solutions:dual} hold true with~$s=s_1=0$
and $s_2$, $s_3$ as in~\eqref{item:theorem regularity duality-i}. 
\end{enumerate}
\end{thm}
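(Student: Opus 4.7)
The plan is to reduce Theorem~\ref{theorem:regularity:duality} to the previously established Theorem~\ref{theorem:regularity:duality-new} by converting the Sobolev inner-product data $(r_m, r^{ext})$ into equivalent duality-pairing data $(R_m, R^{ext})$ via the Riesz-type representers from Lemma~\ref{lemma:Laplace:Beltrami}. Recall that Theorem~\ref{theorem:regularity:duality-new} was proved exactly for the strong-form problem~\eqref{dual:problem:first:equation:strong}--\eqref{dual:problem:third:equation:strong:3}, whose right-hand sides are $r$, $\overline{R_m}$, $\overline{R^{ext}}$; by Lemma~\ref{lemma:rewriting:dual:problem:strong:formulation} this strong form is equivalent to the variational problem~\eqref{dual:problem:2} whose right-hand side is $\mathcal F(v,\lambda,\widetilde v)$. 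Hence, once the representers are constructed at the correct regularity level, the conclusion is immediate from Theorem~\ref{theorem:regularity:duality-new}.

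\textbf{Step 1 (construction of representers).} Apply Lemma~\ref{lemma:Laplace:Beltrami} with parameter $s = s_2$ and $s = s_3$, respectively: from $r_m \in H^{-\sm + s_2}(\Gamma)$ obtain $R_m \in H^{s_2 + \sm}(\Gamma)$ with
\[
\|R_m\|_{s_2+\sm,\Gamma}= \|r_m\|_{-\sm+s_2,\Gamma},
\]
and from $r^{ext}\in H^{-\sv+s_3}(\Gamma)$ obtain $R^{ext}\in H^{s_3+\sv}(\Gamma)$ with the analogous norm identity. These identities, together with~\eqref{regularity:Melenk:trick}, guarantee that \eqref{dual:problem:2} coincides with the strong system \eqref{dual:problem:first:equation:strong}--\eqref{dual:problem:third:equation:strong:3} for the same unknowns.

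\textbf{Step 2 (apply the previous shift theorem).} Set $s := \min\{s_1,\, s_2 - \tfrac{3}{2} + \sm,\, s_3 - \tfrac{1}{2} + \sv\}$; the assumptions $s_1\ge 0$, $s_2\ge \tfrac{3}{2}-\sm$, $s_3 \ge \tfrac{1}{2}-\sv$ ensure $s\ge 0$. By construction, $s \le s_1$, $s+\tfrac{3}{2} \le s_2+\sm$, $s+\tfrac{1}{2}\le s_3+\sv$, so Sobolev embedding on $\Gamma$ and on $\Omega$ gives
\[
\|r\|_{s,\Omega} \lesssim \|r\|_{s_1,\Omega},\quad
\|R_m\|_{s+\frac{3}{2},\Gamma} \lesssim \|R_m\|_{s_2+\sm,\Gamma},\quad
\|R^{ext}\|_{s+\frac{1}{2},\Gamma} \lesssim \|R^{ext}\|_{s_3+\sv,\Gamma}.
\]
Therefore $(r, R_m, R^{ext})$ lie in the spaces required by Theorem~\ref{theorem:regularity:duality-new}\eqref{item:theorem regularity duality-i-new}, and that theorem yields $\psi \in H^{s+2}(\Omega)$, $\psim \in H^{s+\frac{1}{2}}(\Gamma)$, $\psitilde \in H^{s+\frac{3}{2}}(\Gamma)$ together with~\eqref{bounds:norms:solutions:dual-new}. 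Chaining this bound with the embedding estimates and the norm identities from Step~1 produces~\eqref{bounds:norms:solutions:dual}.

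\textbf{Step 3 (the $L^\infty$ case).} For part~\eqref{item:theorem:regularity:duality-ii}, Theorem~\ref{theorem:regularity:duality-new}\eqref{item:theorem:regularity:duality-ii-new} is only available at $s=0$, so set $s = s_1 = 0$ and repeat the argument verbatim: construct the same representers, use the embedding $H^{s_2+\sm}(\Gamma)\hookrightarrow H^{\frac{3}{2}}(\Gamma)$ and $H^{s_3+\sv}(\Gamma)\hookrightarrow H^{\frac{1}{2}}(\Gamma)$ (which is precisely why we require $s_2\ge\tfrac{3}{2}-\sm$ and $s_3\ge\tfrac{1}{2}-\sv$), and apply the $L^\infty$ version of Theorem~\ref{theorem:regularity:duality-new}. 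No new analysis is required.

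The only step with any subtlety is matching indices in Step~1--2: the point is that $R_m$ and $R^{ext}$ are produced with \emph{extra} regularity $\sm$ and $\sv$ beyond what Theorem~\ref{theorem:regularity:duality-new} strictly needs, so the shift budget $s$ is limited by whichever of $s_1$, $s_2-\tfrac{3}{2}+\sm$, $s_3-\tfrac{1}{2}+\sv$ is smallest. There is no genuine obstacle here, since all the real work---the bootstrap argument on $\Lcalk$ and the transmission shift theorem in Step~6 of Theorem~\ref{theorem:regularity:duality-new}---has already been carried out; the present statement is a reformulation in a functional setting tailored for the duality arguments of Section~\ref{section:FEM-BEM:mortar}.
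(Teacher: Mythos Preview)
Your proposal is correct and follows essentially the same approach as the paper: construct the Riesz representers $R_m$, $R^{ext}$ via Lemma~\ref{lemma:Laplace:Beltrami}, check that the definition of $s$ guarantees $s+\tfrac{3}{2}\le s_2+\sm$ and $s+\tfrac{1}{2}\le s_3+\sv$, and then invoke Theorem~\ref{theorem:regularity:duality-new} (part~\eqref{item:theorem regularity duality-i-new} or~\eqref{item:theorem:regularity:duality-ii-new} as appropriate). The paper's proof is simply a more condensed version of your three steps.
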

\begin{proof}
\emph{Proof of~\eqref{item:theorem regularity duality-i}:} 
{}Lemma~\ref{lemma:Laplace:Beltrami} provides representers 
$\Rm \in H^{\sm+s_2}(\Gamma)$ and $\Rext \in H^{\sv+s_2}(\Gamma)$ such that 
$(\cdot,\rm)_{-\sm,\Gamma} = \langle \cdot,\Rm\rangle$ and 
$(\cdot,\rext)_{-\sv,\Gamma} = \langle \cdot,\Rext\rangle$. 
In view of $s + \frac{3}{2} \leq \sm+ s_2 $ and 
in view of $s + \frac{1}{2} \leq \sv+ s_3 $, we get the result from 
Theorem~\ref{theorem:regularity:duality-new}. 

\emph{Proof of~\eqref{item:theorem:regularity:duality-ii}:} 
For $s_1 = 0$ we have $s = 0$. The assumptions on $s_2$, $s_3$ imply
that the representers $\Rm \in H^{\frac{3}{2}}(\Gamma)$, 
$\Rext \in H^{\frac{1}{2}}(\Gamma)$. 
Theorem~\ref{theorem:regularity:duality-new}, \eqref{item:theorem:regularity:duality-ii-new} then proves the claim.  
\end{proof}

\begin{remark} \label{remark:diffusion}
The analysis in Theorem~\ref{theorem:regularity:duality-new} 
has been performed assuming that~$\Ad$ is globally smooth.
The smoothness assumption on~$\Ad$ can be relaxed to piecewise smooth in the following sense: Let~$\Omega_i$, $i=1,\ldots,N$, be Lipschitz domains whose closures are pairwise disjoint and~$\overline{\Omega}_i \subset \Omega$.
Let~$\Ad$ be smooth on each~$\overline{\Omega}_i$ and smooth on~$\Omega \setminus \cup_i \Omega_i$.
Assume that the following shift theorem holds: There is~$s_0 \in (0,1]$ such that, for any~$g \in H^{-1+s_0}(\Omega)$, the solution~$v \in H^1_0(\Omega)$ of
$$
-\div(\Ad \nabla v) = g \quad \mbox{ in $\Omega$}, 
$$
satisfies $\sum_i \|v\|_{H^{1+s_0}(\Omega_i)} + \|v\|_{H^{1+s_0}(\Omega \setminus \cup_i \overline{\Omega_i})} \lesssim \|g\|_{H^{-1+s_0}(\Omega)}$.
Then, for ~$r \in \left(H^{1-s_0}(\Omega)\right)^\prime$, 
$\Rm \in H^{-1+s_0+3/2}(\Gamma)$, and~$\Rext \in H^{-1+s_0+1/2}(\Gamma)$, 
the solution $(\psi,\psim,\psitilde)$ satisfies 
$\sum_i \|\psi\|_{H^{1+s_0}(\Omega_i)} +
\|\psi\|_{H^{1+s_0}(\Omega\setminus \cup_i\overline{\Omega_i})} +
\|\psim\|_{H^{1+s_0-3/2}(\Gamma)} +
\|\psitilde\|_{H^{1+s_0-3/2}(\Gamma)} \lesssim \|r
\|_{H^{-1+s_0}(\Omega)} + \|\rm \|_{H^{-1+s_0-3/2}(\Gamma)} \      
+ \|\rext \|_{H^{-1+s_0-1/2}(\Gamma)}$. 
This regularity assertion is sufficient to perform the convergence analysis of Section~\ref{section:FEM-BEM:mortar} on meshes that are aligned with subdomains~$\Omega_i$, $i=1,\ldots,N$. 
\eremk
\end{remark}

\section{Analysis of the FEM-BEM mortar coupling} \label{section:FEM-BEM:mortar}
In this section, we discuss a FEM-BEM mortar coupling tailored to the approximation of the solution to~\eqref{weak:formulation:mortar:Helmholtz}, and equivalently to~\eqref{equivalent:weak:formulation}.

Given a triple of finite dimensional spaces $V_h \times \Wh \times \Zh \subset H^1(\Omega) \times H^{-\frac{1}{2}}(\Gamma) \times H^{\frac{1}{2}}(\Gamma)$, 
the discretization of~\eqref{equivalent:weak:formulation} reads
\begin{equation} \label{FEM-BEM}
\begin{cases}
\text{Find } (\uh,\mh,\uhext) \in \Vh\times \Wh \times \Zh \text{ such that}\\
(\Ad \nabla \uh,  \nabla \vh )_{0,\Omega} - ((\k)^2 \uh,  \vh)_{0,\Omega} + i k (\uh,  \vh)_{0,\Gamma} - \langle \mh, \vh \rangle = (\f, \vh)_{0,\Omega}\quad \forall \vh \in \Vh,\\
\langle  (\Bk + i k \Aprimek) \uhext  - \Aprimek \mh, \vhtilde   \rangle = 0 \quad \forall \vhtilde \in \Zh,\\
\langle \uh, \lambdah \rangle - \langle  (\frac{1}{2} + \Kk) \uhext - \Vk (\mh - i k \uhext), \lambdah\rangle  =  0 \quad \forall \lambdah \in \Wh.\\
\end{cases}
\end{equation}
We will show unique solvability of~\eqref{FEM-BEM}, as well as a quasi-optimality result using the 
Schatz argument~\cite{schatz1974}, which  relies on 
($i$) the G{\aa}rding inequality~\eqref{final:Garding}, ($ii$) the regularity of the solution to the dual problem in Theorem~\ref{theorem:regularity:duality}, 
and ($iii$) an approximation property of the space $\Vh \times \Wh \times \Zh$. For the latter we make the following assumption:
\begin{assumption}[Approximation property] 
\label{assumption:approximation}
Given $s_0 > 0$, the family $\{\Vh \times \Wh \times \Zh\}_{h > 0} \subset H^1(\Omega) \times H^{-\frac{1}{2}}(\Gamma) \times H^{\frac{1}{2}}(\Gamma)$
of spaces has the following approximation property: For any $\varepsilon > 0$ there is $h_0 > 0$ such that
for all $h \in (0,h_0]$ there holds for all $(\psi,\psim,\psitilde) \in 
H^{1+s_0}(\Omega) \times H^{-\frac{1}{2}+s_0}(\Gamma) \times H^{\frac{1}{2}+s_0}(\Gamma)$ 
with $\|\psi\|_{H^{1+s_0}(\Omega)} + \|\psim\|_{H^{-\frac{1}{2}+s_0}(\Gamma)} + \|\psitilde\|_{H^{\frac{1}{2}+s_0}(\Gamma)} \leq 1$ 
\[ 
\inf_{\psih \in \Vh} \|\psi - \psih\|_{H^1(\Omega)} + 
\inf_{\psimh \in \Wh} \|\psim - \psimh\|_{H^{-\frac{1}{2}}(\Gamma)} + 
\inf_{\psitildeh \in \Zh} \|\psitilde - \psitildeh\|_{H^{\frac{1}{2}}(\Gamma)}  \leq \varepsilon.
\]
\end{assumption} 

\begin{remark}
For domains $\Omega$ with smooth boundary $\Gamma$, families $\Vh$, $\Wh$, $\Zh$ with some approximation
properties can be constructed as spaces of piecewise mapped polynomials. In order to resolve the boundary 
$\Gamma$, curved elements have to employed in the construction of $\Vh$, using, e.g., the technique 
of ``transfinite blending'', \cite{Gordon71,GordonHall73,GordonHall73b}. A specific family of spaces of 
piecewise polynomials of degree $p$ on meshes of size $h$ is given in \cite[Example~{5.1}]{melenk-sauter10}; that
family has the expected approximation properties in terms of the mesh size $h$ and the polynomial degree $p$. 
The spaces $\Wh$ and $\Zh$ can also be constructed as spaces of piecewise mapped polynomials of degree $p$
on a mesh on $\Gamma$ using the parametrization(s) of~$\Gamma$. We refer to \cite[Sec.~{4.1}]{SauterSchwab_BEMbook}
for details.
\eremk
\end{remark}
\begin{thm} \label{theorem:abstract:error:analysis}
Let $(u,\m,\uext) \in H^1(\Omega) \times H^{-\frac{1}{2}}(\Gamma) \times H^{\frac{1}{2}}(\Gamma)$ be the solution to 
problem~\eqref{weak:formulation:mortar:Helmholtz}.
Let the family of space $\{\Vh \times \Wh \times \Zh\}_{h >0}$ satisfy Assumption~\ref{assumption:approximation} with $s_0 = 1$. 
Then there is $h_0 > 0$ such that for all $h \in (0,h_0]$ there is a unique solution~$(\uh,\mh,\uhext) \in \Vh\times \Wh \times \Zh$ 
of~\eqref{FEM-BEM}. Moreover, there is $C > 0$, which depends on $k$, $\Omega$, $\Ad$, and $n$, such that for $h \in (0,h_0]$ 
\[
\begin{split}
  & \Vert\Ad^{\frac12}\nabla( u - \uh) \Vert_{1,\Omega} 
+k\Vert n(u-\uh)\Vert_{0,\Omega}
  + \Vert \m - \mh \Vert_{-\frac{1}{2}, \Gamma} + \Vert \uext - \uhext \Vert_{\frac{1}{2}, \Gamma}\\
& \quad\quad\quad \quad\quad \leq C \Bigl(  \Vert u -\vh \Vert_{1,\Omega}
+ \Vert \m - \nh \Vert_{-\frac{1}{2}, \Gamma} + \Vert \uext - \vexth \Vert_{\frac{1}{2}, \Gamma}\Bigr) \quad \forall (\vh, \nh, \vexth)\in \Vh \times \Wh \times \Zh. 
\end{split}
\]
\end{thm}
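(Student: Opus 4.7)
The plan is to deploy the classical Schatz argument, using the Gårding inequality of Theorem~\ref{theorem:Garding:inequality} together with a duality bound on the lower-order compact perturbation terms. Throughout, write $e := (e_u,e_m,e_{u^{ext}}) := (u-\uh, m-\mh, \uext-\uhext)$. Galerkin orthogonality, obtained by subtracting~\eqref{FEM-BEM} from~\eqref{weak:formulation:mortar:Helmholtz}, reads $\T(e,(\vh,\nh,\vexth)) = 0$ for all test triples in $\Vh\times\Wh\times\Zh$. Combined with the continuity of $\T$ on $H^1(\Omega)\times H^{-\frac12}(\Gamma)\times H^{\frac12}(\Gamma)$ (which follows from Cauchy--Schwarz, the trace theorem, and the mapping properties recalled after~\eqref{single layer operator}--\eqref{hypersingular:operator}), one gets the familiar identity
\[
\T(e,e) \;=\; \T\bigl(e,(u-\vh,m-\nh,\uext-\vexth)\bigr),
\]
so that $|\T(e,e)| \lesssim \vertiii{e}\,\vertiii{(u-\vh,m-\nh,\uext-\vexth)}$, where $\vertiii{\cdot}$ denotes the natural $H^1\times H^{-\frac12}\times H^{\frac12}$ product norm.

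The obstacle is that Theorem~\ref{theorem:Garding:inequality} only yields coercivity up to the compact perturbation $k^2\|n e_u\|^2_{0,\Omega} + \cG(k)(\|e_m\|^2_{-\frac52,\Gamma}+\|e_{u^{ext}}\|^2_{-\frac32,\Gamma})$, which must be handled by duality. I would apply Theorem~\ref{theorem:regularity:duality} (or more directly Theorem~\ref{theorem:regularity:duality-new}) to the dual problem with data $r = \overline{e_u} \in L^2(\Omega)$, $r_m = \overline{e_m} \in H^{-\sm}(\Gamma)$, $\rext = \overline{e_{u^{ext}}}\in H^{-\sv}(\Gamma)$, with $\sm=\tfrac52$ and $\sv=\tfrac32$ exactly as dictated by the Gårding inequality. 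The resulting dual solution $(\psi,\psim,\psiext)$ then lies in $H^{2}(\Omega)\times H^{\frac12}(\Gamma)\times H^{\frac32}(\Gamma)$ with
\[
\|\psi\|_{2,\Omega}+\|\psim\|_{\frac12,\Gamma}+\|\psiext\|_{\frac32,\Gamma}
\;\lesssim\; \|e_u\|_{0,\Omega} + \|e_m\|_{-\frac52,\Gamma} + \|e_{u^{ext}}\|_{-\frac32,\Gamma}.
\]
Testing the dual equation with $e$, invoking Galerkin orthogonality to subtract an arbitrary discrete triple, and using continuity of $\T$ gives
\[
\|e_u\|^2_{0,\Omega}+\|e_m\|^2_{-\frac52,\Gamma}+\|e_{u^{ext}}\|^2_{-\frac32,\Gamma}
\;\lesssim\;\vertiii{e}\, \bigl(\|\psi-\psih\|_{1,\Omega}+\|\psim-\psimh\|_{-\frac12,\Gamma}+\|\psiext-\psitildeh\|_{\frac12,\Gamma}\bigr).
\]
Assumption~\ref{assumption:approximation} with $s_0=1$ (which matches the shift $H^2\times H^{\frac12}\times H^{\frac32}$ of the dual problem) allows us, for any prescribed $\varepsilon>0$, to choose $h_0$ such that for $h\in(0,h_0]$ the infimum of the right-hand side over $(\psih,\psimh,\psitildeh)\in\Vh\times\Wh\times\Zh$ is bounded by $\varepsilon$ times the dual regularity norm, and hence by $\varepsilon(\|e_u\|_{0,\Omega}+\|e_m\|_{-\frac52,\Gamma}+\|e_{u^{ext}}\|_{-\frac32,\Gamma})$.

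Combining the Gårding inequality with the duality estimate, choosing $\varepsilon$ sufficiently small to absorb the perturbation terms into the positive part, and using the identity above for $\T(e,e)$ together with continuity yields, for $h \leq h_0$,
\[
\vertiii{e}^2 \;\lesssim\; \vertiii{e}\,\inf_{(\vh,\nh,\vexth)\in\Vh\times\Wh\times\Zh}\vertiii{(u-\vh,m-\nh,\uext-\vexth)},
\]
and dividing by $\vertiii{e}$ produces the claimed quasi-optimality (the $k\|n(u-\uh)\|_{0,\Omega}$ contribution on the left is absorbed by the $H^1$-norm because $k$ is fixed and $\|n\|_\infty<\infty$, or kept by a trivial rearrangement). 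Existence and uniqueness of $(\uh,\mh,\uhext)$ for $h\le h_0$ follows from the derived quasi-optimality by a standard Fredholm-type argument in finite dimension: uniqueness is obtained by applying the estimate to $\f\equiv 0$ (forcing $\vertiii{e}=0$ hence $u_h=m_h=u_h^{ext}=0$), and existence is then automatic because~\eqref{FEM-BEM} is a square linear system. The delicate point of the argument is precisely the matching of the compact perturbation norms $H^{-\frac52}(\Gamma)$, $H^{-\frac32}(\Gamma)$ of the Gårding inequality with the shift theorem of Theorem~\ref{theorem:regularity:duality-new} and with the regularity level $s_0=1$ demanded in Assumption~\ref{assumption:approximation}; everything else reduces to the standard Schatz bootstrap.
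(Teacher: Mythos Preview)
Your proposal is correct and follows essentially the same Schatz argument as the paper: G{\aa}rding inequality, dual problem with $\sm=\tfrac52$, $\sv=\tfrac32$, the shift theorem of Theorem~\ref{theorem:regularity:duality-new} yielding $H^2\times H^{\frac12}\times H^{\frac32}$ regularity, and then Assumption~\ref{assumption:approximation} with $s_0=1$ to absorb the compact perturbation. The only cosmetic difference is that the paper chooses the dual data as $r=2(\k)^2(u-\uh)$, $\rm=2\cG(k)(m-\mh)$, $\rext=2\cG(k)(\uext-\uhext)$ so that the compact terms appear directly as $\T(e,(\psi,\psim,\psitilde))$ and the whole estimate is organized as a single splitting $I+II$, whereas you first write $\T(e,e)=\T(e,(u-\vh,\dots))$ and then bound the lower-order norms separately; also, drop the complex conjugations on the dual data, since $(e_u,e_u)_{0,\Omega}=\|e_u\|_{0,\Omega}^2$ already.
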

\begin{proof}
We follow the classical Schatz argument~\cite{schatz1974}.
We apply the G{\aa}rding inequality~\eqref{final:Garding} and get
\[
\begin{split}
&\Vert\Ad^{\frac12}\nabla( u - \uh) \Vert^2_{0,\Omega}  + \Vert \m - \mh \Vert^2_{-\frac{1}{2}, \Gamma} + \Vert \uext - \uhext \Vert_{\frac{1}{2}, \Gamma}^2\\
& \quad +  k^2\Vert n (u -\uh) \Vert_{0,\Omega}^2 + \cG(k)\left(\Vert \m - \mh \Vert^2_{-\frac{5}{2}, \Gamma}  +  \Vert \uext - \uhext \Vert^2_{-\frac{3}{2},\Gamma}\right)   \\
& \lesssim \Re \Bigl(\T \bigl((u-\uh, \m-\mh, \uext - \uhext), (u-\uh, \m-\mh, \uext - \uhext)\bigl)\Bigr)\\
& \quad + 2 k^2\Vert n (u -\uh) \Vert_{0,\Omega}^2 + 2\cG(k) \left(\Vert \m - \mh \Vert^2_{-\frac{5}{2}, \Gamma} + \Vert \uext - \uhext \Vert_{-\frac{3}{2}, \Gamma}^2  \right),\\
\end{split}
\]
where $\cG(k)$ is the constant appearing in~\eqref{final:Garding}.

In the following, we understand that the implied constants in $\lesssim$ depend on $k$, $\Omega$, $\Ad$, and $n$.

By considering the dual problem~\eqref{dual:problem:1} with~$\rr =2(\k)^2 (u - \uh)$, $\rm = 2\cG(k) (\m - \mh)$, and~$\rext = 2\cG(k) (\uext- \uhext)$, we can write
\[
\begin{split}
&\Vert\Ad^{\frac12}\nabla( u - \uh) \Vert^2_{0,\Omega}  + \Vert \m - \mh \Vert^2_{-\frac{1}{2}, \Gamma} + \Vert \uext - \uhext \Vert_{\frac{1}{2}, \Gamma}^2\\
& \quad +  k^2\Vert n (u -\uh) \Vert_{0,\Omega}^2 + \cG(k)\left(\Vert \m - \mh \Vert^2_{-\frac{5}{2}, \Gamma}  +  \Vert \uext - \uhext \Vert^2_{-\frac{3}{2},\Gamma}\right)   \\
& \lesssim \Re \Bigl(\T \bigl((u-\uh, \m-\mh, \uext - \uhext), (u-\uh, \m-\mh, \uext - \uhext)\bigr)\Bigr)\\
& \quad +  \T\bigl((u-\uh, \m-\mh, \uext - \uhext), (\psi, \psim, \psitilde)\bigr), \\
\end{split}
\]
where, by the stability estimate~\eqref{bounds:norms:solutions:dual} of 
Theorem~\ref{theorem:regularity:duality-new} with the parameters 
$s_1=s_2=s_3=0$, $\sigma_m=\frac52$, $\sigma_v=\frac32$ and thus $s=0$ there holds, 
\begin{align}
\nonumber 
\|\psi\|_{2,\Omega} + \|\psim\|_{\frac{1}{2},\Gamma} + \|\psitilde\|_{\frac{3}{2},\Gamma} 
& \lesssim k^2 \|u - \uh\|_{0,\Omega} + \cG(k) \|\m - \mh\|_{-\frac{5}{2},\Gamma} + \cG(k) \|\uext - \uhext\|_{-\frac{1}{2},\Gamma}, \\
\label{eq:stability-dual}
& \lesssim \|u - \uh\|_{1,\Omega} + \|\m - \mh\|_{-\frac{1}{2},\Gamma} + \cG(k) \|\uext - \uhext\|_{\frac{1}{2},\Gamma}. 
\end{align}

Applying the Galerkin orthogonality, we get for all $\psih\in\Vh$, $\psimh \in \Wh$, and~$\psitildeh \in \Zh$
\begin{equation} \label{splitting:into:I+II}
\begin{split}
&\Vert\Ad^{\frac12}\nabla( u - \uh) \Vert^2_{0,\Omega}  + \Vert \m - \mh \Vert^2_{-\frac{1}{2}, \Gamma} + \Vert \uext - \uhext \Vert_{\frac{1}{2}, \Gamma}^2\\
& \quad +  k^2\Vert n (u -\uh) \Vert_{0,\Omega}^2 + \cG(k)\left(\Vert \m - \mh \Vert^2_{-\frac{5}{2}, \Gamma}  +  \Vert \uext - \uhext \Vert^2_{-\frac{3}{2},\Gamma}\right)   \\
& \lesssim \Re \Bigl(\T \bigl((u-\uh, \m-\mh, \uext - \uhext), (u-\uh, \m-\mh, \uext - \uhext)\bigr)\Bigr)\\
& \quad +  \T\bigl((u-\uh, \m-\mh, \uext - \uhext), (\psi  - \psih  , \psim - \psimh, \psitilde - \psitildeh)\bigr) = I +II. \\
\end{split}
\end{equation}
We estimate the two terms $I$, $II$ on the right-hand side
of~\eqref{splitting:into:I+II} separately, starting with the term~$I$.
  Using again Galerkin orthogonality and the definition of
    $\T(\cdot,\cdot)$ and of the combined integral
    operators,
  we get for all $\vh\in \Vh$, $\nh\in \Wh$, and~$\vexth\in \Zh$
\[
\begin{split}
I 	& = \Re \T \bigl((u-\uh, \m-\mh, \uext - \uhext) , (u-\vh, \m - \nh, \uext - \vexth)  \bigr)\\
	& = \Re \big( (\Ad\nabla (u-\uh),  \nabla (u-\vh))_{0,\Omega} -  ((\k)^2 (u-\uh), u-\vh)_{0,\Omega} + i k (u-\uh, u-\vh)_{0,\Gamma}\\
	& \quad  - \langle \m-\mh, u -\vh \rangle-\langle (\Bk + i k \Aprimek) (\uext- \uhext) - \Aprimek (\m - \mh) , \uext - \vexth \rangle\\
	& \quad + \langle u - \uh,\m-\nh\rangle - \langle (\frac{1}{2} + \Kk) (\uext-\uhext) - \Vk ((\m - \mh) - i k (\uext - \uhext)), \m - \nh \rangle \big).
\end{split}
\]
As (see, e.g., \cite[proof of Lemma 8.1.6]{melenk_phdthesis})
\[
k (u-\uh, u-\vh)_{0,\Gamma} \lesssim k^2 \Vert u - \uh \Vert_{0,\Omega} \Vert u - \vh \Vert_{0,\Omega} + \vert u - \uh \vert_{1,\Omega} \vert u -\vh \vert_{1,\Omega},
\]
we have the straightforward bound
\[
\begin{split}
  \vert I \vert 	& \lesssim
  \Vert u-\uh \Vert_{1,\Omega}
   \Vert u-\vh \Vert_{1,\Omega} 
 + \Vert \m-\mh \Vert_{-\frac{1}{2}, \Gamma} \Vert u -\vh \Vert_{\frac12,\Gamma}\\
			& \quad + \Vert \Bk (\uext- \uhext) \Vert_{-\frac{1}{2}, \Gamma} \Vert \uext - \vexth \Vert_{\frac{1}{2},\Gamma} + k \Vert \Aprimek (\uext- \uhext) \Vert_{-\frac{1}{2}, \Gamma} \Vert \uext - \vexth \Vert_{\frac{1}{2},\Gamma}\\
			& \quad + \Vert \Aprimek (\m - \mh) \Vert_{-\frac{1}{2},\Gamma} \Vert \uext - \vexth \Vert_{\frac{1}{2}, \Gamma}\\
			& \quad + \Vert \m-\nh\Vert_{-\frac{1}{2}, \Gamma} \Vert u - \uh \Vert_{\frac{1}{2}, \Gamma} + \Vert \m -\nh \Vert_{-\frac{1}{2}, \Gamma} \Vert  (\frac{1}{2} + \Kk) (\uext-\uhext) \Vert_{\frac{1}{2}, \Gamma}\\
			& \quad + \Vert \m -\nh \Vert_{-\frac{1}{2}, \Gamma}  \Vert  \Vk (\m - \mh - i \k (\uext - \uhext)) \Vert_{\frac{1}{2}, \Gamma}.\\
\end{split}
\]
Simple calculations, based on the mapping properties
of the trace operator:
    $H^1(\Omega)\rightarrow H^{\frac12}(\Gamma)$ and 
of boundary
integral operators and combined integral operators
(in particular, we use
  $\Bk: H^{\frac12}(\Gamma) \rightarrow H^{-\frac12}(\Gamma)$,
  $\Aprimek, \Kk: H^{\frac12}(\Gamma) \rightarrow H^{\frac12}(\Gamma)$,
  $\Vk: H^{-\frac12}(\Gamma) \rightarrow H^{\frac12}(\Gamma)$),
lead to
\[
\begin{split}
  \vert I \vert 	& \lesssim
  \Vert u-\uh \Vert_{1,\Omega}
\Vert u-\vh \Vert_{1,\Omega} 
  + \Vert \m - \mh \Vert_{-\frac{1}{2},\Gamma} \Vert u -\vh \Vert_{1,\Omega}  \\
			& \quad + \Vert \uext - \uhext \Vert_{\frac{1}{2}, \Gamma} \Vert \uext - \vexth \Vert_{\frac{1}{2}, \Gamma} + \Vert \uext - \uhext \Vert_{-\frac{1}{2}, \Gamma}  \Vert \uext - \vexth \Vert_{\frac{1}{2}, \Gamma}\\
			& \quad + \Vert \m - \mh \Vert_{-\frac{1}{2}, \Gamma} \Vert \uext - \vexth\Vert_{\frac{1}{2}, \Gamma}   \\
			& \quad + \Vert \m -\nh \Vert_{-\frac{1}{2}, \Gamma} \Vert u - \uh \Vert_{1,\Omega}  + \Vert  \m - \nh \Vert_{-\frac{1}{2}, \Gamma} \Vert \uext - \uhext \Vert_{\frac{1}{2},\Gamma}\\
			& \quad + \Vert \m - \nh \Vert_{-\frac{1}{2}, \Gamma} \Vert \m - \mh \Vert_{-\frac{1}{2}, \Gamma} + \Vert \m - \nh \Vert_{-\frac{1}{2}, \Gamma} \Vert \uext -\uhext \Vert_{-\frac{1}{2}, \Gamma}.
\end{split}
\]
An~$\ell^2$ Cauchy-Schwarz inequality, together with $\Vert \uext -\uhext \Vert_{-\frac{1}{2}, \Gamma}\le \Vert \uext - \uhext \Vert_{\frac{1}{2}, \Gamma}$, entails
\begin{equation} \label{estimate:on:I:abstract}
\begin{split}
  \vert I \vert 	& \lesssim
  \left(   \Vert u-\uh \Vert_{1,\Omega}^2+ \Vert \m - \mh
    \Vert^2_{-\frac{1}{2}, \Gamma}
    + \Vert \uext - \uhext \Vert_{\frac{1}{2}, \Gamma}^2  \right) ^{\frac{1}{2}}   \\
			& \quad\quad  \cdot \left(   \Vert u-\vh \Vert_{1,\Omega}^2  + \Vert \m - \nh \Vert_{-\frac{1}{2}, \Gamma}^2  +  \Vert \uext - \vexth \Vert_{\frac{1}{2}, \Gamma}^2 \right) ^{\frac{1}{2}} .\\
\end{split}
\end{equation}
For the term~$II$ appearing on the right-hand side
of~\eqref{splitting:into:I+II}, we proceed similarly as for the
  term~$I$ and deduce 
\begin{equation} \label{estimate:on:II:abstract:partial}
\begin{split}
\vert II \vert	& \lesssim \left(   \Vert u-\uh
    \Vert_{1,\Omega}^2 + \Vert \m - \mh \Vert^2_{-\frac{1}{2},
  \Gamma}
+ \Vert \uext - \uhext \Vert_{\frac{1}{2}, \Gamma}^2  \right) ^{\frac{1}{2}} \\
			& \quad  \cdot \left(   \Vert \psi-\psih \Vert_{1,\Omega}^2 + \Vert \psim - \psimh \Vert_{-\frac{1}{2}, \Gamma}^2+  \Vert \psitilde - \psitildeh \Vert_{\frac{1}{2}, \Gamma}^2   \right) ^{\frac{1}{2}}. \\
\end{split}
\end{equation}
Assumption~\ref{assumption:approximation} with $s_0$ and the regularity assertion~\eqref{eq:stability-dual} provide for each $\varepsilon >0$ an $h_0 = h_0(\varepsilon)$ 
such that for $h \in (0,h_0]$ we have 
\begin{align}\label{estimate:on:psi}
\begin{split}
  &\Vert \psi -\psih \Vert^2_{1,\Omega} 
  +\Vert \psim - \psimh \Vert_{-\frac{1}{2}, \Gamma}^2
  +\Vert \psitilde - \psitildeh \Vert_{\frac{1}{2}, \Gamma}^2\\
  &\qquad\qquad\qquad \lesssim \varepsilon^2 \left(
    \Vert\psi \Vert^2_{2,\Omega}
    +\Vert \psim \Vert^2_{\frac{1}{2}, \Gamma}
    +\Vert \psitilde \Vert^2_{\frac{3}{2}, \Gamma}
  \right)\\
 &\qquad\qquad\qquad \lesssim \varepsilon^2\left(
    \Vert u-\uh \Vert^2_{1,\Omega}
    +\Vert \m-\mh \Vert^2_{-\frac{1}{2},\Gamma}
    + \Vert \uext - \uhext \Vert^2_{\frac{1}{2}, \Gamma}  
  \right).
\end{split}
\end{align}
Inserting~\eqref{estimate:on:psi}
into~\eqref{estimate:on:II:abstract:partial} produces 
\begin{equation} \label{estimate:on:II:abstract}
\begin{split}
\vert II \vert 	& \lesssim  \varepsilon \left(  \Vert u -\uh
    \Vert_{1,\Omega}^2 + \Vert \m - \mh \Vert^2_{-\frac{1}{2},
  \Gamma}
+ \Vert \uext -\uhext\Vert_{\frac{1}{2}, \Gamma}^2  \right). \\
\end{split}
\end{equation}
We insert \eqref{estimate:on:I:abstract} 
and~\eqref{estimate:on:II:abstract} into~\eqref{splitting:into:I+II} 
and take $\varepsilon$ sufficiently small so as to kick the term $II$ back to the left-hand side 
of \eqref{splitting:into:I+II}. The desired quasi-optimality result is obtained.  

The well-posedness of method~\eqref{FEM-BEM} follows as in~\cite{schatz1974}.
\end{proof}

\begin{remark} \label{remark:smooth-vs-polyhedron}
In the numerical experiments of Section~\ref{section:numerical:results}, we employ a polyhedral domain~$\Omega$.
This case is not directly covered by
Theorem~\ref{theorem:abstract:error:analysis}.
Nevertheless, Remark~\ref{remark:nonsmooth-interface} indicates that a G{\aa}rding inequality is valid also for Lipschitz  domains.
The shift theorem for the dual problem in Theorem~\ref{theorem:regularity:duality-new} relies 
on (i) a shift theorem for the iterior impedance problem 
(cf.~\eqref{eq:ddalethint} in Step~3 of the proof of Theorem~\ref{theorem:regularity:duality-new})
and (ii) a shift theorem for a transmission problem (cf.\ Steps~4 and 5 of the proof of Theorem~\ref{theorem:regularity:duality-new}). 
Some shift theorem is also available for both problems for polyhedral domains. 

\eremk
\end{remark}

\section{Numerical results} \label{section:numerical:results}
partitions of $\Omega$ by tetrahedra with straight faces, with $h$ denoting the mesh size. 
The two sequences of meshes~$\{\Pcalh^2\}$ and~$\{\Pcalh^3\}$ on $\Gamma$ are obtained by intersecting $\Gamma$ 
with the tetrahedra in~$\Pcalh^1$. For $\ell\in\mathbb N_0$,
  we denote by ${\mathcal
    P}_\ell$ the space of polynomials of degree at most $\ell$.
For a polynomial degree
$p\ge1$, we set 
\begin{subequations}
\label{approximation:spaces}
\begin{align}
\Vh& :=S^{p,1}(\Omega,\Pcalh^1):= \{v \in H^1(\Omega)\,|\, v_{|_K} \in {\mathcal P}_p \ \forall K \in \Pcalh^1\}, \\
\Wh& :=S^{p-1,0}(\Gamma,\Pcalh^2):= \{v \in L^2(\Gamma)\,|\, v_{|_K} \in {\mathcal P}_{p-1} \ \forall K \in \Pcalh^2\}, \\
\Zh& :=S^{p,1}(\Gamma,\Pcalh^3):= \{v \in H^1(\Gamma)\,|\, v_{|_K} \in {\mathcal P}_{p} \ \forall K \in \Pcalh^3\}. 
\end{align}
\end{subequations}

The numerical experiments for an $h$-version are based on quasi-uniform mesh refinements of a coarse initial triangulation 
and polynomial degrees, $\p=1$, $2$, and~$3$.  We also study the $p$-version on fixed meshes. 
We investigate the behavior of the following relative errors:
\begin{equation} \label{computed:quantities}
\frac{\Vert u - \uh \Vert_{0,\Omega}}{\Vert u \Vert_{0,\Omega}},\quad\quad \frac{\vert u - \uh \vert_{1,\Omega}}{\vert u \vert_{1,\Omega}},
			\quad\quad \h^{\frac{1}{2}}\frac{\Vert \m - \mh \Vert_{0,\Gamma}}{ \Vert \m \Vert_{0,\Gamma}}, \quad\quad\h^{-\frac{1}{2}} \frac{\Vert \uext - \uhext \Vert_{0,\Gamma}}{\Vert \uext\Vert_{0,\Gamma}}.
\end{equation}
Note that the two last quantities scale, in terms of $h$, like the corresponding~$H^{-\frac{1}{2}}(\Gamma)$ and the~$H^{\frac{1}{2}}(\Gamma)$ relative errors, respectively, but are computationally more easily accessible. 

\paragraph*{Test case 1: $\h$-version.}
Let the diffusion coefficient~$\Ad\equiv 1$, and prescribe 
the exact solution to be 
\begin{equation} \label{testcase1:solution}
u(x,y,z) =
\begin{cases}
\sin(k x) \cos(k y) & \text{in } \Omega\\
\frac{e^{i k r}}{r} & \text{in } \Omegap \cup \Gamma,\\
\end{cases}
\end{equation}
where we recall that~$\Omegap:= \mathbb R^3 \setminus \overline\Omega$.

We remark that the function~$u$ in~\eqref{testcase1:solution} does not solve~\eqref{eq:helmholtz} due to the nonzero jumps across $\Gamma$. Instead, we considered
a modified problem and discretization scheme allowing for known jumps across the interface. This only incurs slight changes in the right-hand sides.
  
Here, we are interested in the $\h$-version of the method.
We depict the errors~\eqref{computed:quantities} for three different choices of wavenumber~$k$, namely, $k=\mathfrak k\sqrt{3}  \pi $ with $\mathfrak k = 1.5$, $3$, and~$6$.
Note that~$k$ is an eigenvalue of the Dirichlet-Laplacian in the second and third case.
We begin with the case~$k=1.5\sqrt 3  $, see Figure~\ref{fig:testcase_1}.

\begin{figure}[h]
\begin{center}
\includegraphics[width=0.495\textwidth]{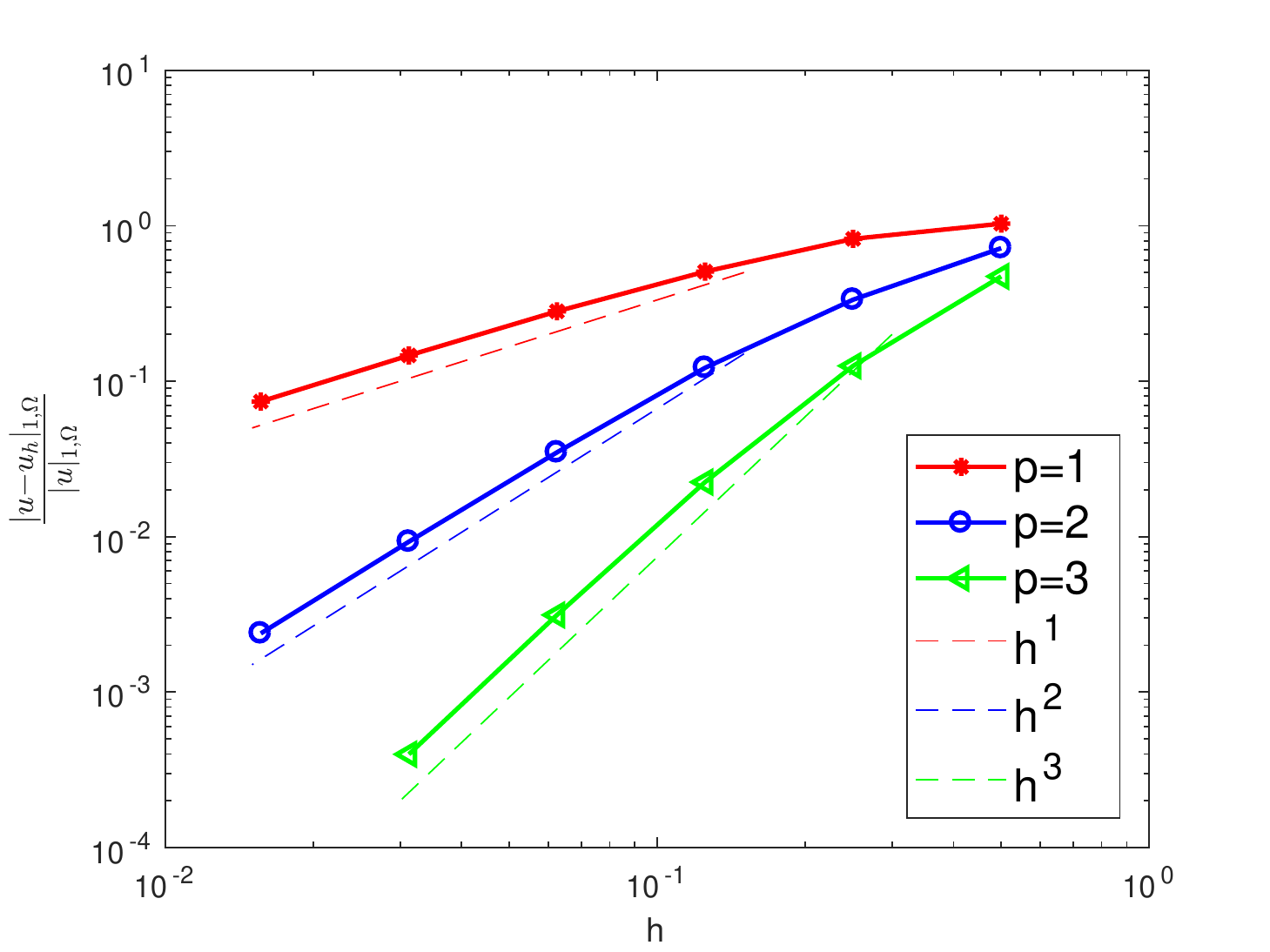}
\includegraphics[width=0.495\textwidth]{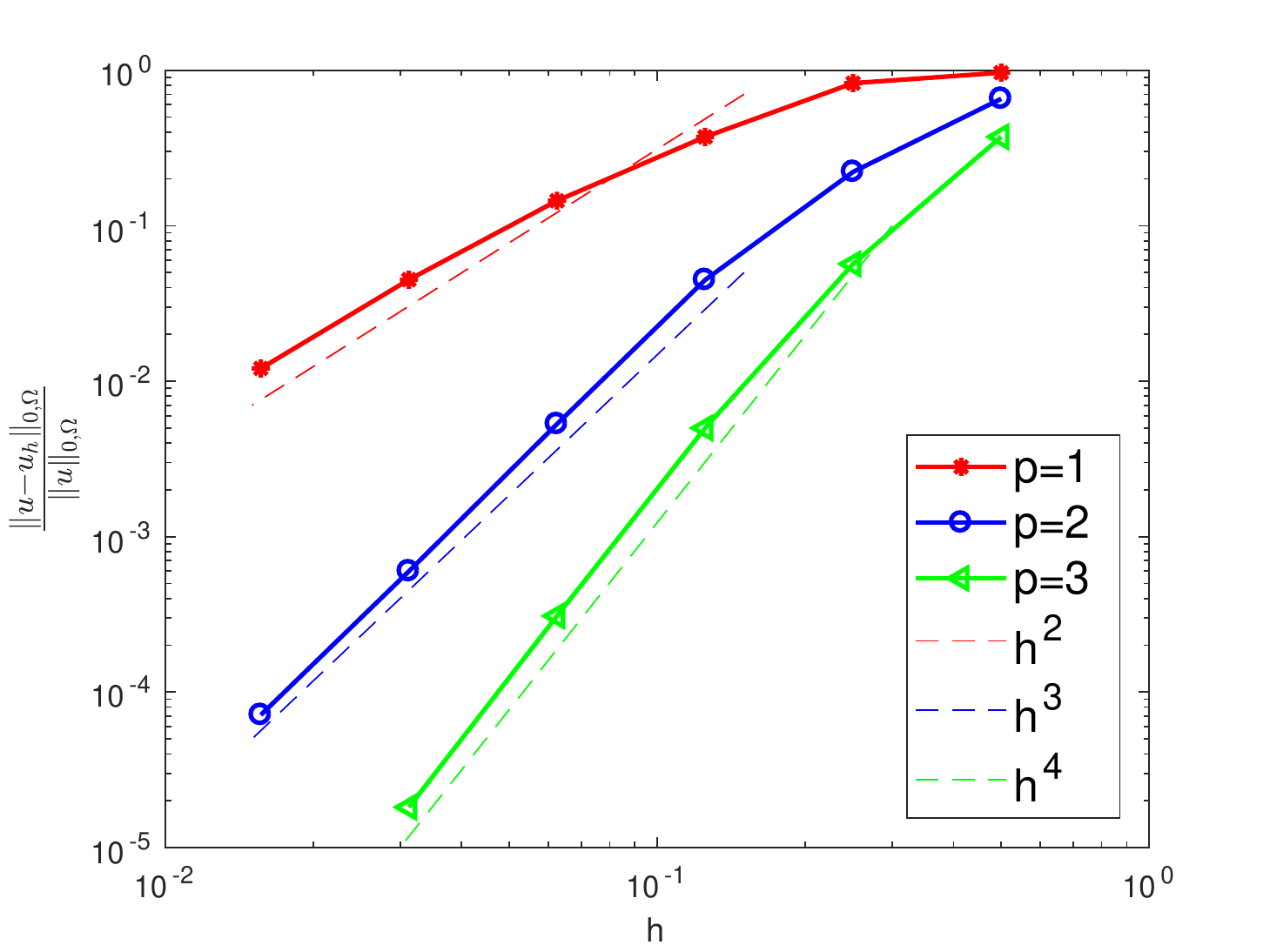}
\includegraphics[width=0.495\textwidth]{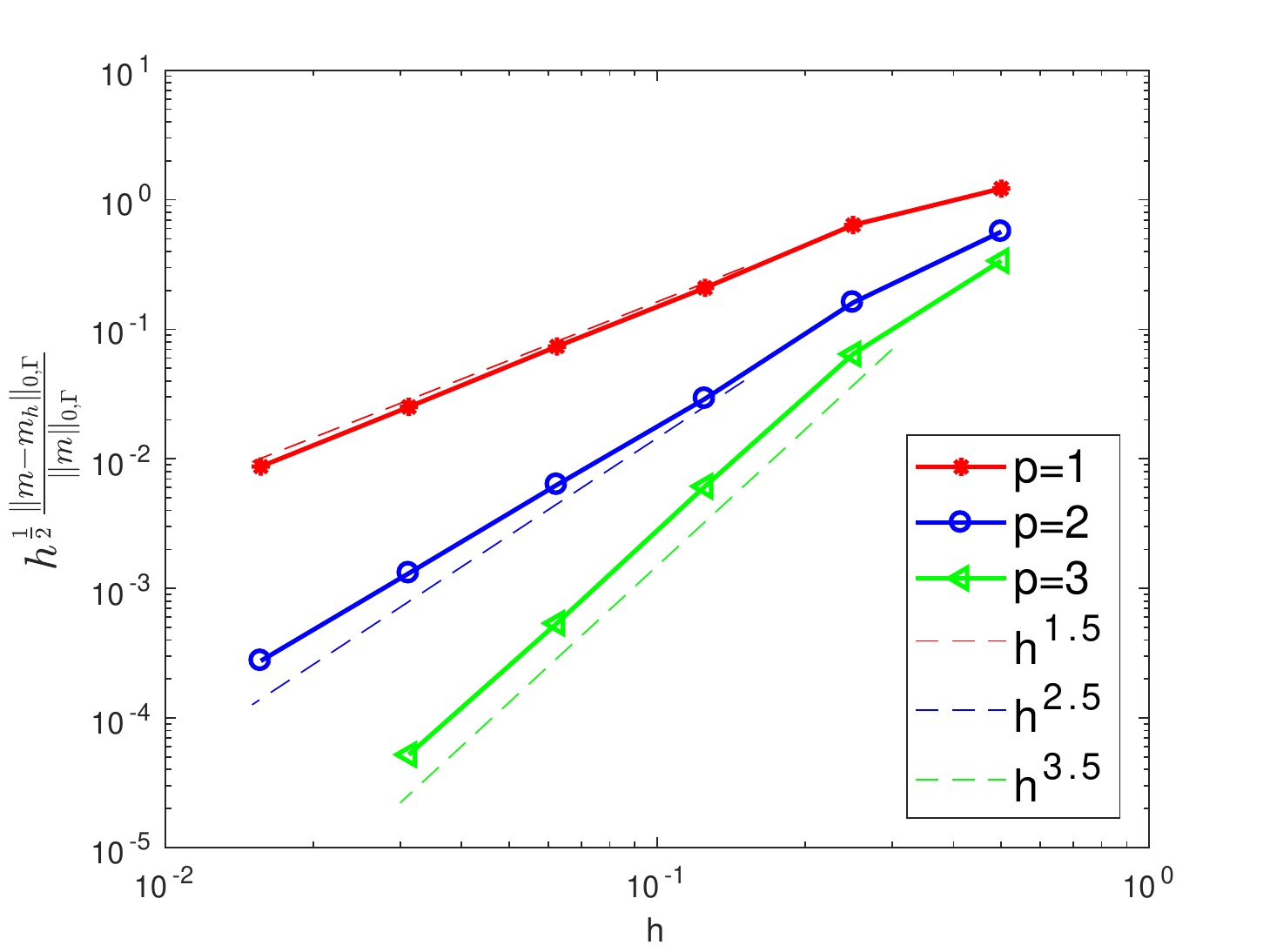}
\includegraphics[width=0.495\textwidth]{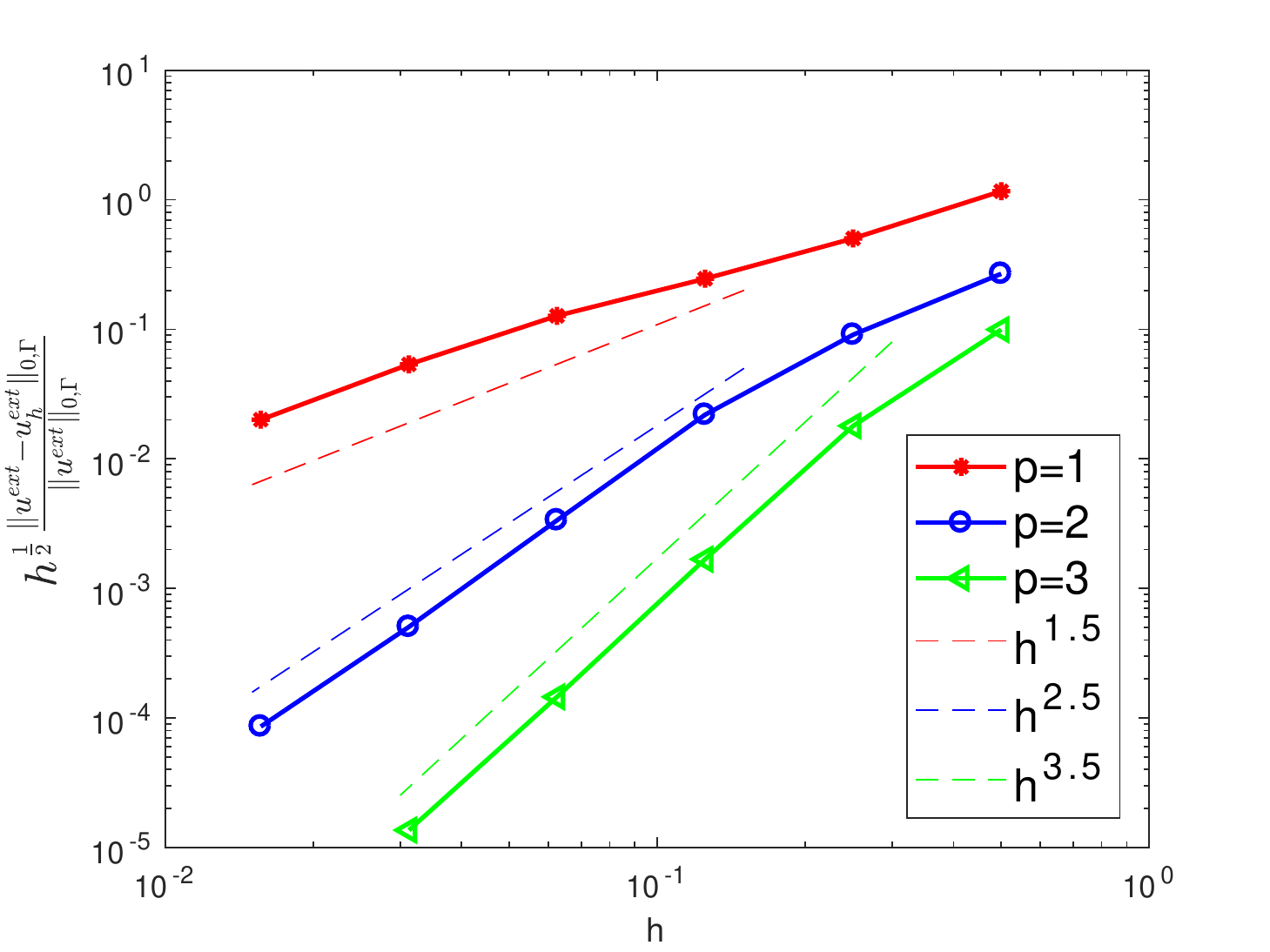}
\end{center}
\caption{The four panels depict the four errors in~\eqref{computed:quantities} for $\p=1,2,3$ in~\eqref{approximation:spaces} versus the mesh size $h$.
The wavenumber~$k=1.5\sqrt 3 \pi$ is \emph{neither} an interior Dirichlet nor a Neumann eigenvalue. $\Ad$ is constant. The solution is given in~\eqref{testcase1:solution}.
\textbf{Top-left panel:} $H^1$ error in~$\Omega$.
\textbf{Top-right panel:} $L^2$ error in~$\Omega$.
\textbf{Bottom-left panel:} $L^2$ error on~$\Gamma$ of the mortar variable times $\h^{\frac{1}{2}}$.
\textbf{Bottom-right panel:} $L^2$ error on~$\Gamma$ times $\h^{-\frac{1}{2}}$.}
\label{fig:testcase_1}
\end{figure}

We observe the optimal $O(h^p)$-convergence in the $H^1$-error. 
However, all the other errors converge faster. A similar ``superconvergence''  
behavior in FEM-BEM coupling has been analyzed in \cite{mpw2017simultaneous} by a refined duality technique.
\medskip

As a second experiment, we consider the wavenumber~$k=3\sqrt 3 \pi$, which is a Dirichlet-Laplace eigenvalue; see Figure~\ref{fig:testcase_2}.
\begin{figure}[h]
\begin{center}
\includegraphics[width=0.495\textwidth]{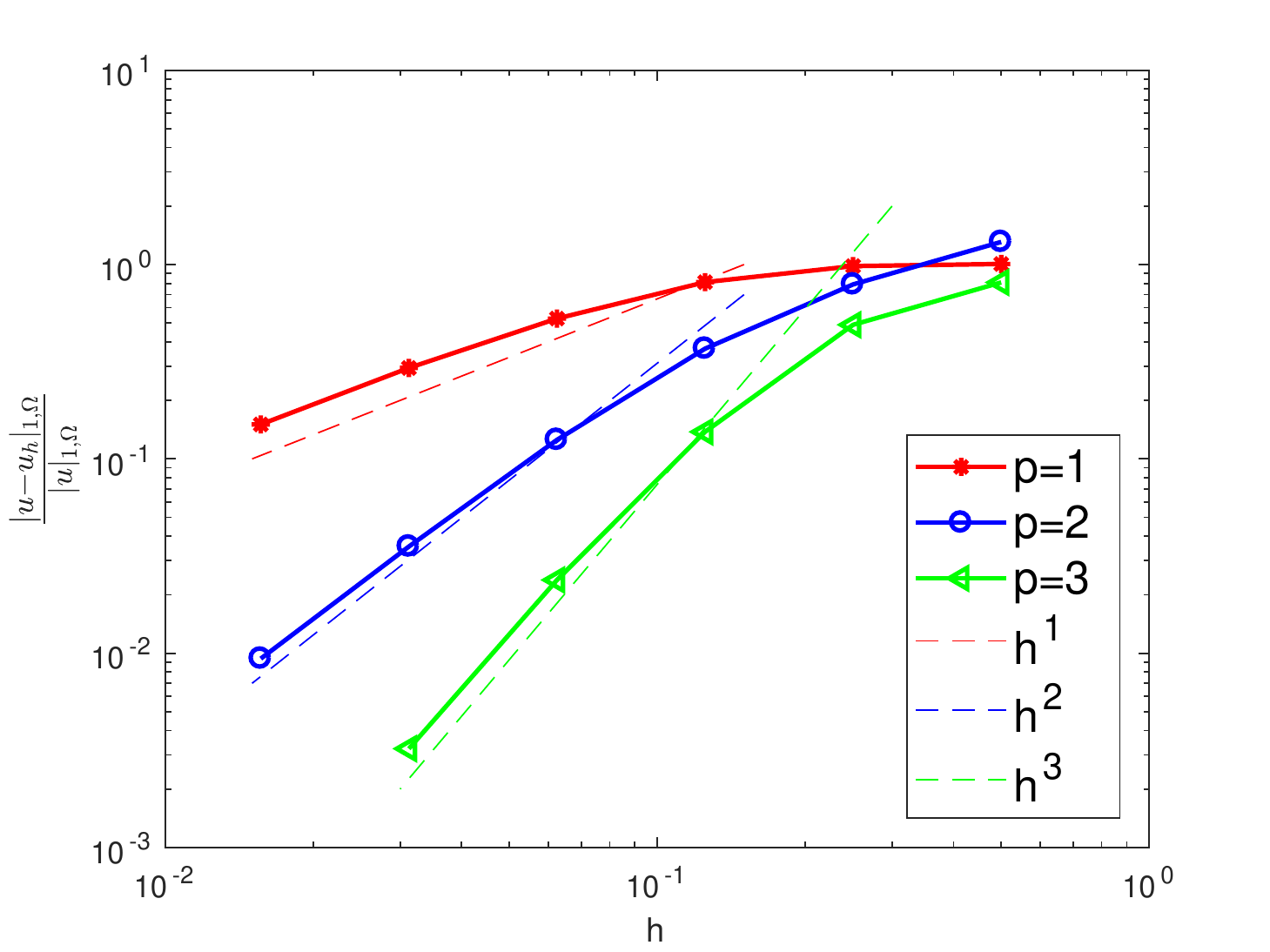}
\includegraphics[width=0.495\textwidth]{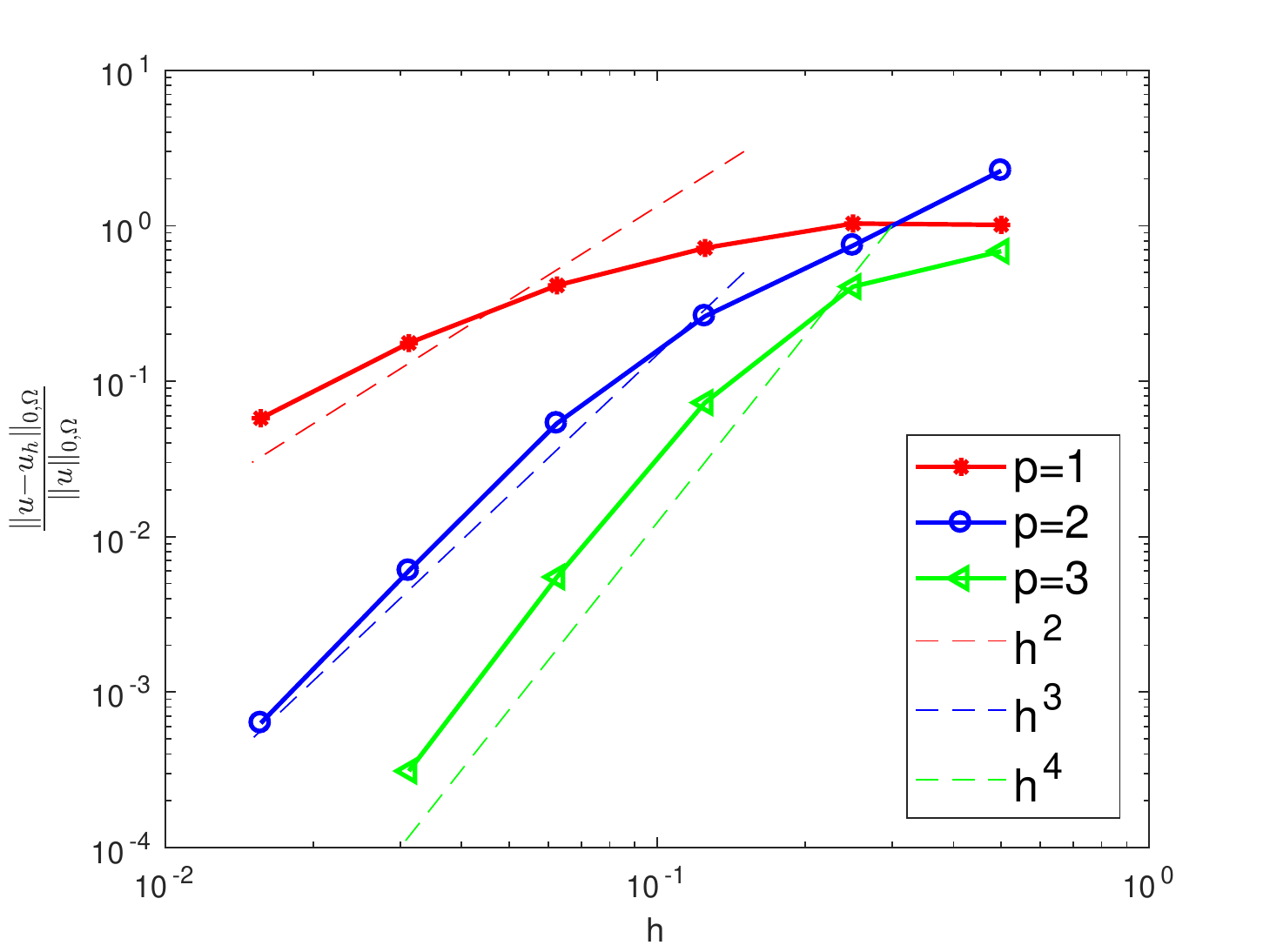}
\includegraphics[width=0.495\textwidth]{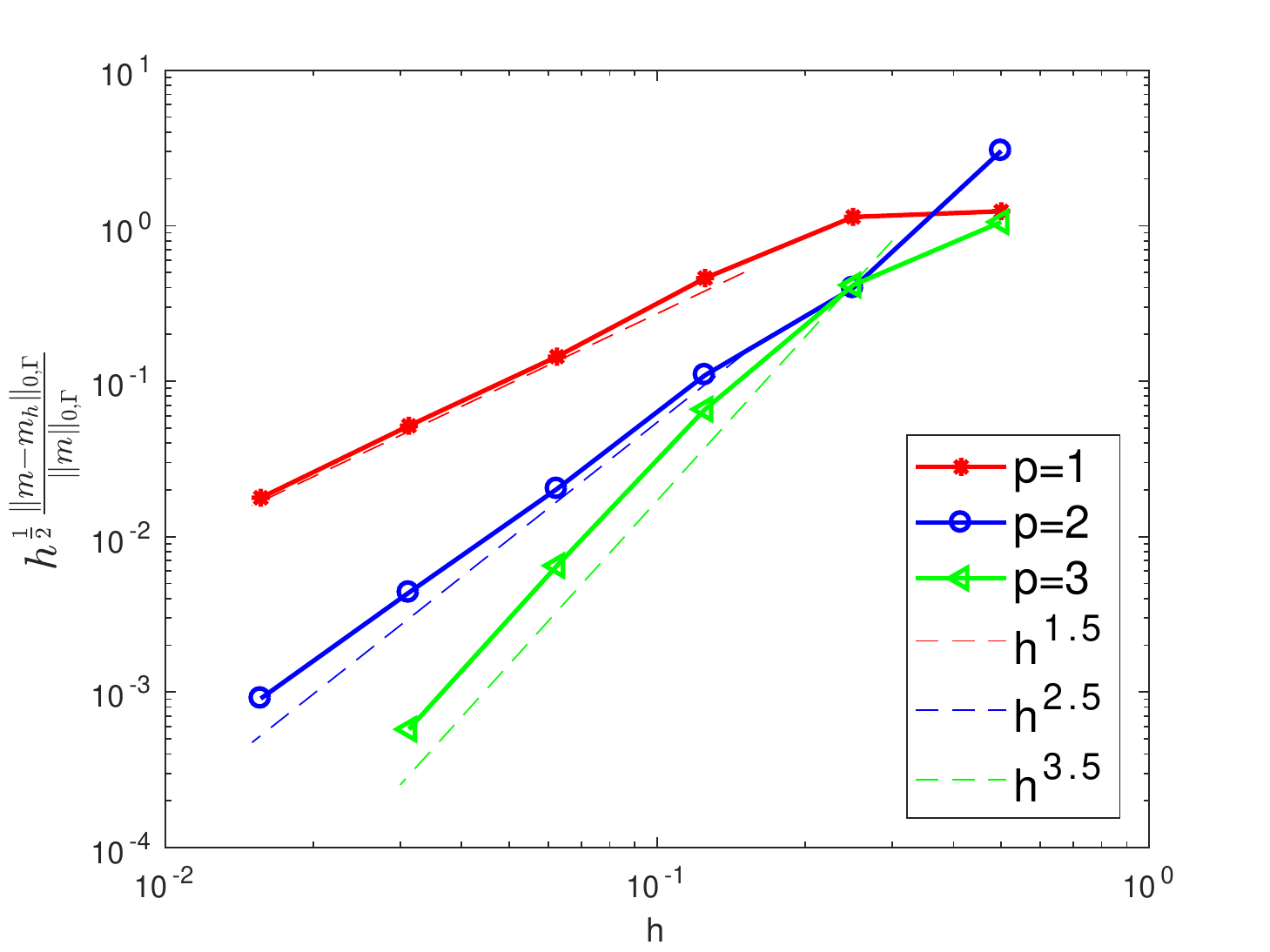}
\includegraphics[width=0.495\textwidth]{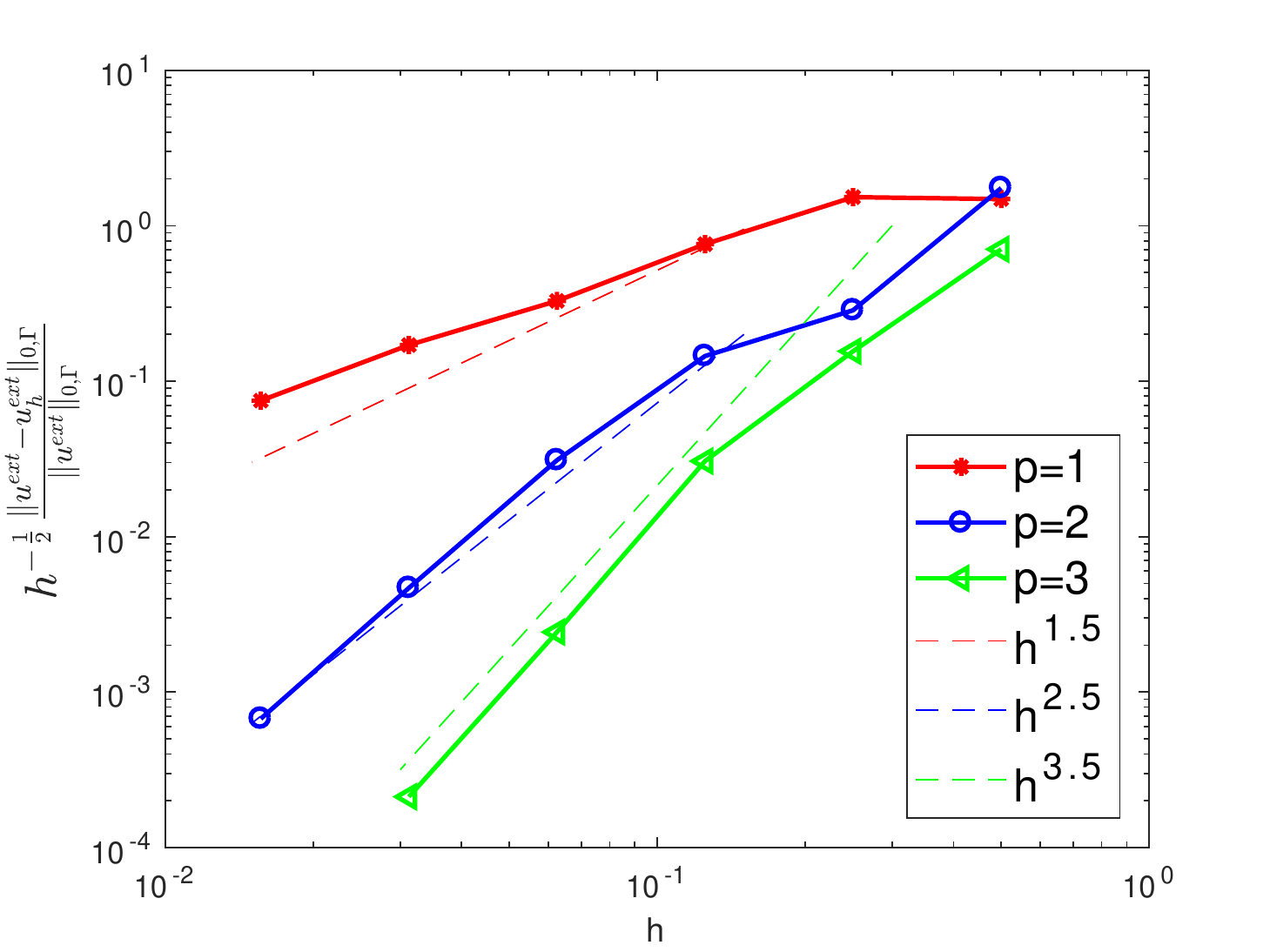}
\end{center}
\caption{The four panels show the four errors in~\eqref{computed:quantities} for $\p =1,2,3$ in~\eqref{approximation:spaces} versus the mesh size $h$.
The wavenumber $k=3 \sqrt 3  \pi$ is a Dirichlet-Laplace eigenvalue.  $\Ad$ is constant. The solution is provided in~\eqref{testcase1:solution}.
\textbf{Top-left panel:} $H^1$ error in~$\Omega$.
\textbf{Top-right panel:} $L^2$ error in~$\Omega$.
\textbf{Bottom-left panel:} $L^2$ error on~$\Gamma$ of the mortar variable times $\h^{\frac{1}{2}}$.
\textbf{Bottom-right panel:} $L^2$ error on~$\Gamma$ times $\h^{-\frac{1}{2}}$.}
\label{fig:testcase_2}
\end{figure}
We observe a very similar convergence behavior to the one observed in Figure~\ref{fig:testcase_1}.
\medskip

Finally, we consider the wavenumber~$k=6\sqrt 3 \pi$, which is again a Dirichlet-Laplace eigenvalue; see Figure~\ref{fig:testcase_3}.
\begin{figure}[h]
\begin{center}
\includegraphics[width=0.495\textwidth]{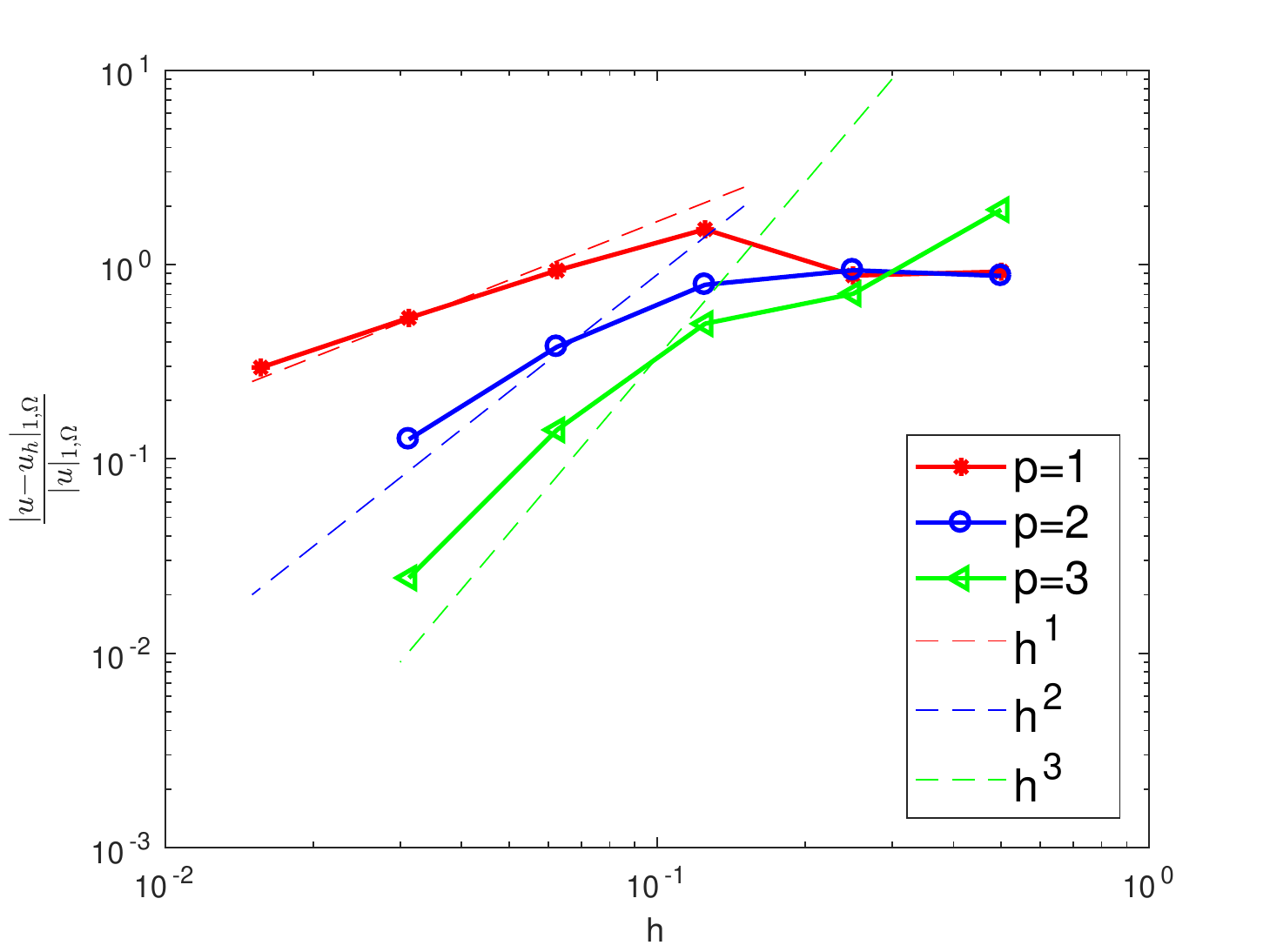}
\includegraphics[width=0.495\textwidth]{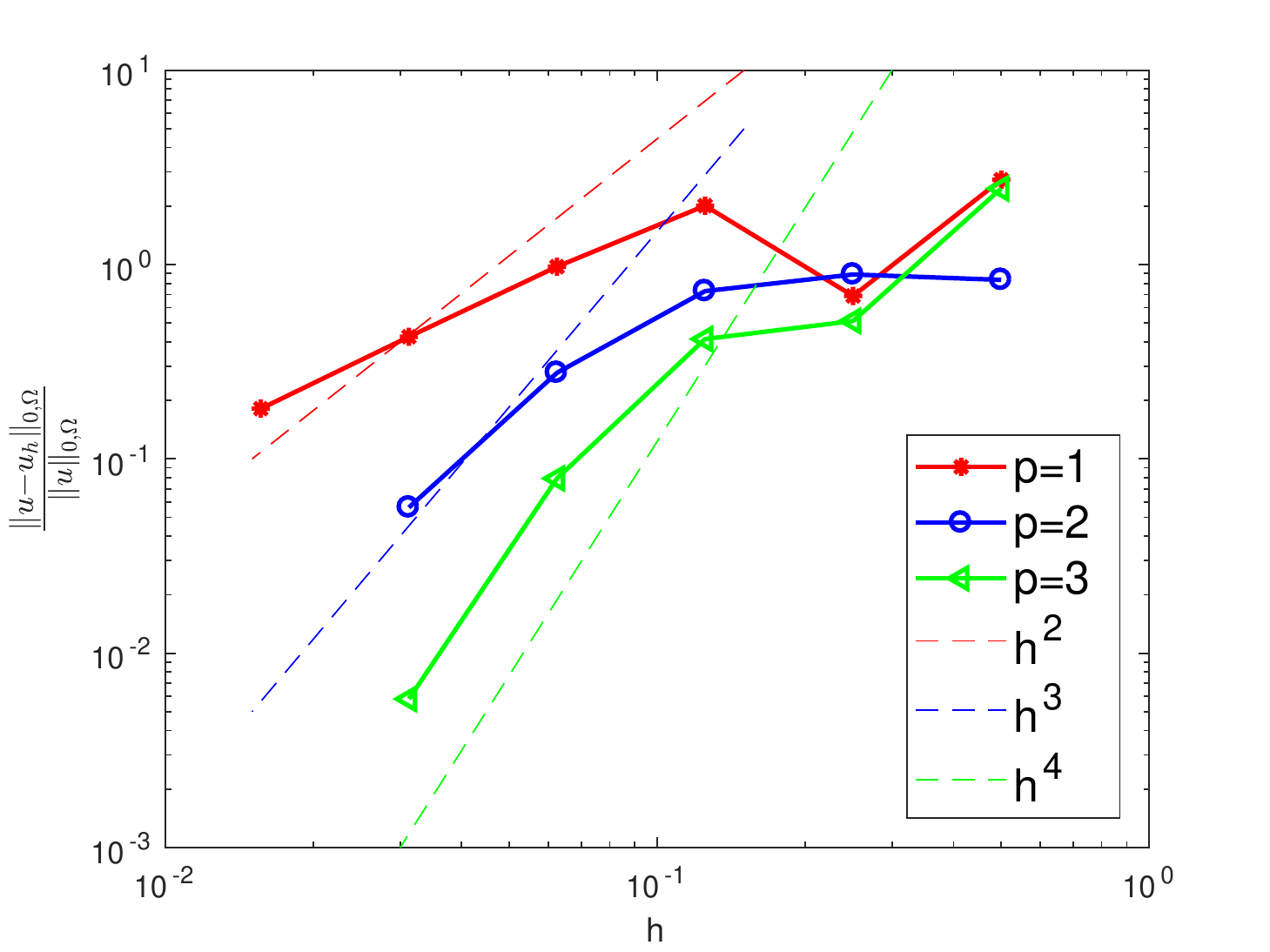}
\includegraphics[width=0.495\textwidth]{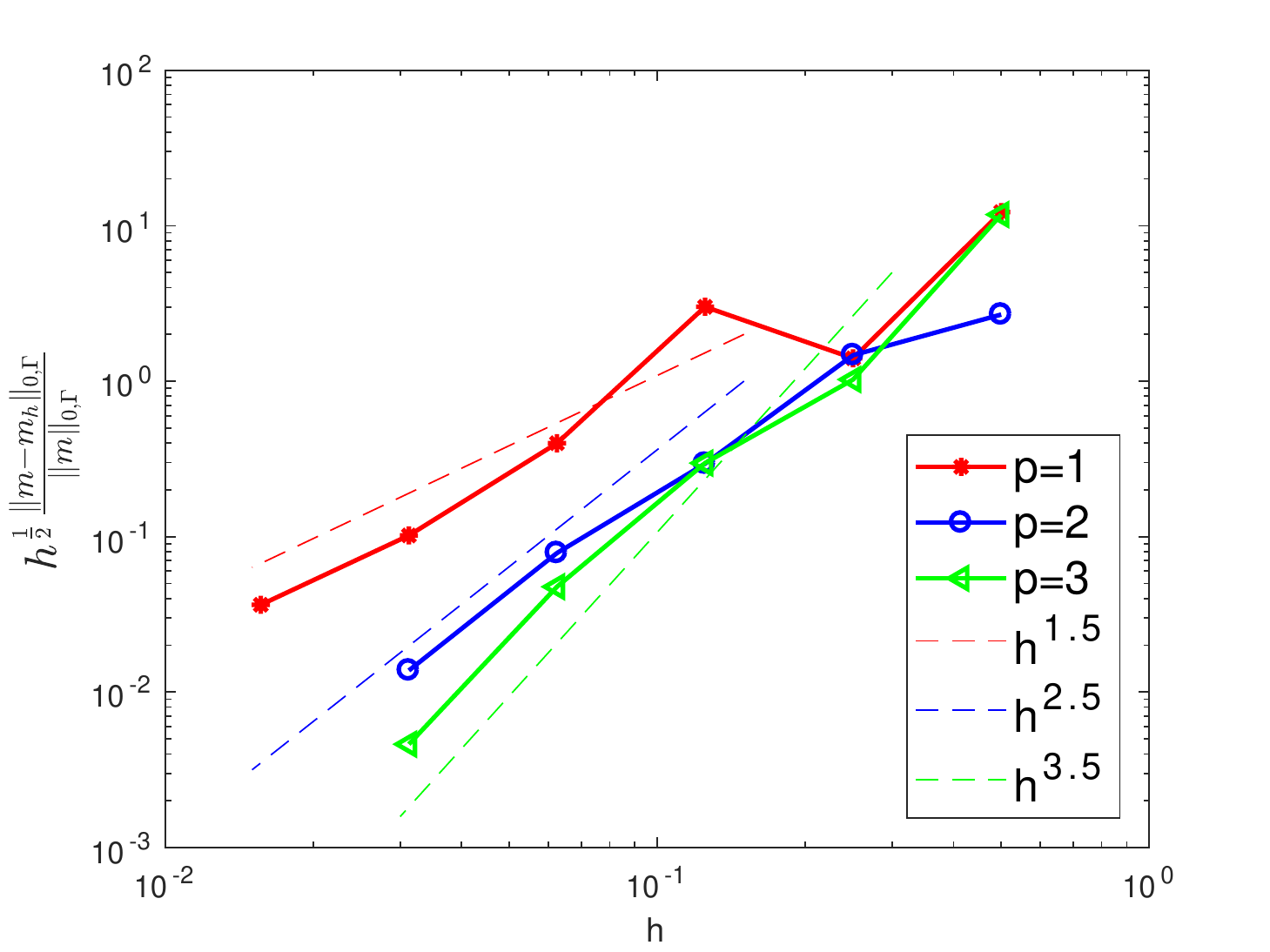}
\includegraphics[width=0.495\textwidth]{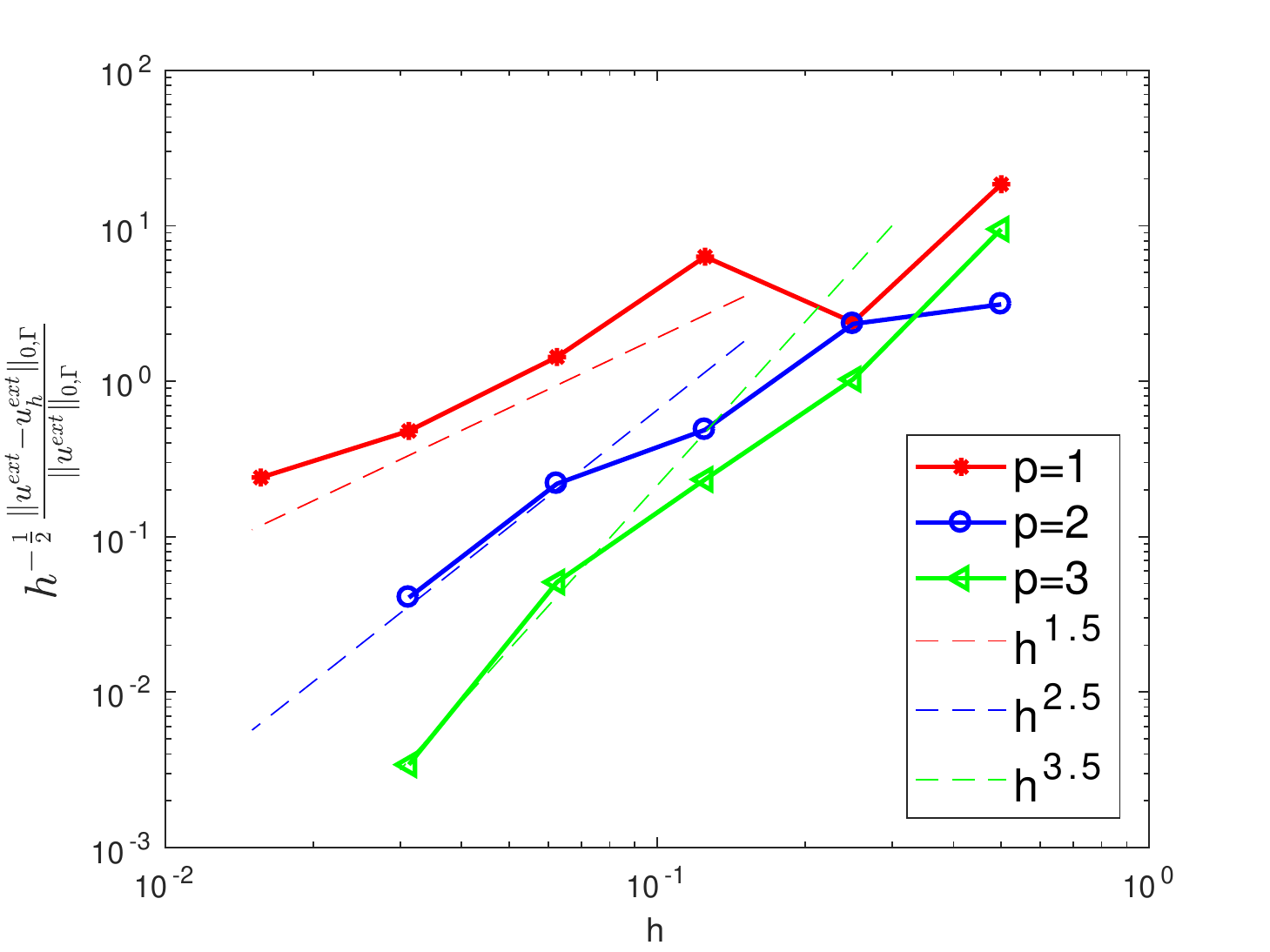}
\end{center}
\caption{The four panels show the four errors in~\eqref{computed:quantities} for $\p=1,2,3$ in~\eqref{approximation:spaces} versus the mesh size $h$. 
The wavenumber $k=6 \sqrt 3  \pi$ is a Dirichlet-Laplace eigenvalue. $\Ad$ is constant. The solution is provided in~\eqref{testcase1:solution}.
\textbf{Top-left panel:} $H^1$ error in~$\Omega$.
\textbf{Top-right panel:} $L^2$ error in~$\Omega$.
\textbf{Bottom-left panel:} $L^2$ error on~$\Gamma$ of the mortar variable times $\h^{\frac{1}{2}}$.
\textbf{Bottom-right panel:} $L^2$ error on~$\Gamma$ times $\h^{-\frac{1}{2}}$.}
\label{fig:testcase_3}
\end{figure}

Here, the initial rate of convergence is degraded by the pollution effect, due to the fact that~$k$ is larger than in the previous two cases. However, with the exception of the~$L^2(\Omega)$ error, the optimal convergence $O(h^p)$ is visible. 
Importantly, it does not matter whether~$k$ is an eigenvalue. The method converges for all choices of the wavenumber, as theoretically predicted.

\paragraph*{Test case 1: $\p$-version.}
Here, we are interested in the $\p$-version of method~\eqref{FEM-BEM}. We consider the test case with explicit solution given in~\eqref{testcase1:solution}. Note that the exact solution is piecewise analytic.
We consider three meshes, namely, a coarse mesh that is then uniformly refined once and twice. 
Figure~\ref{fig:p:version} shows the performance of the $p$-version on these three meshes. 
\begin{figure}[h]
\begin{center}
\includegraphics[width=0.495\textwidth]{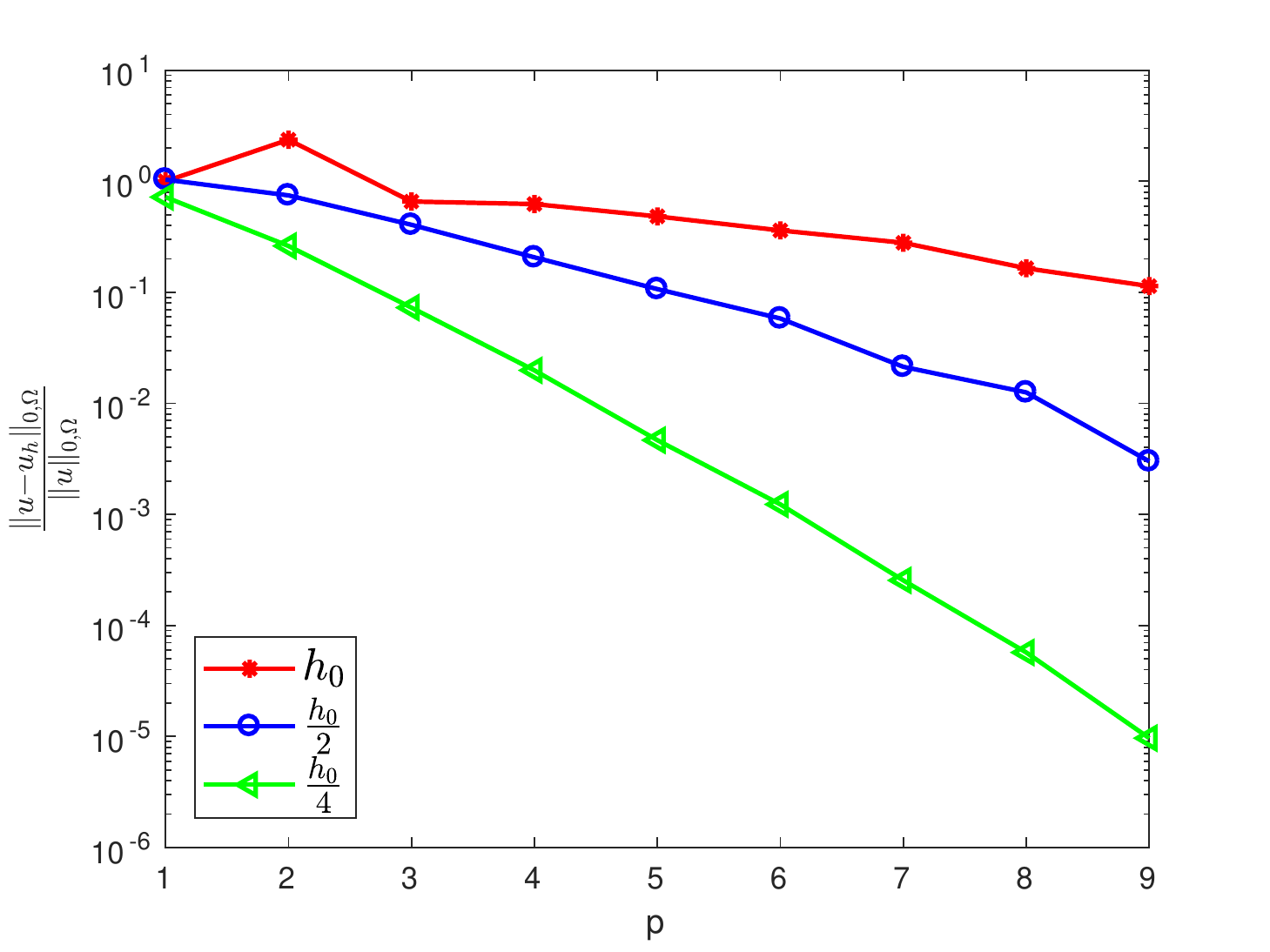}
\includegraphics[width=0.495\textwidth]{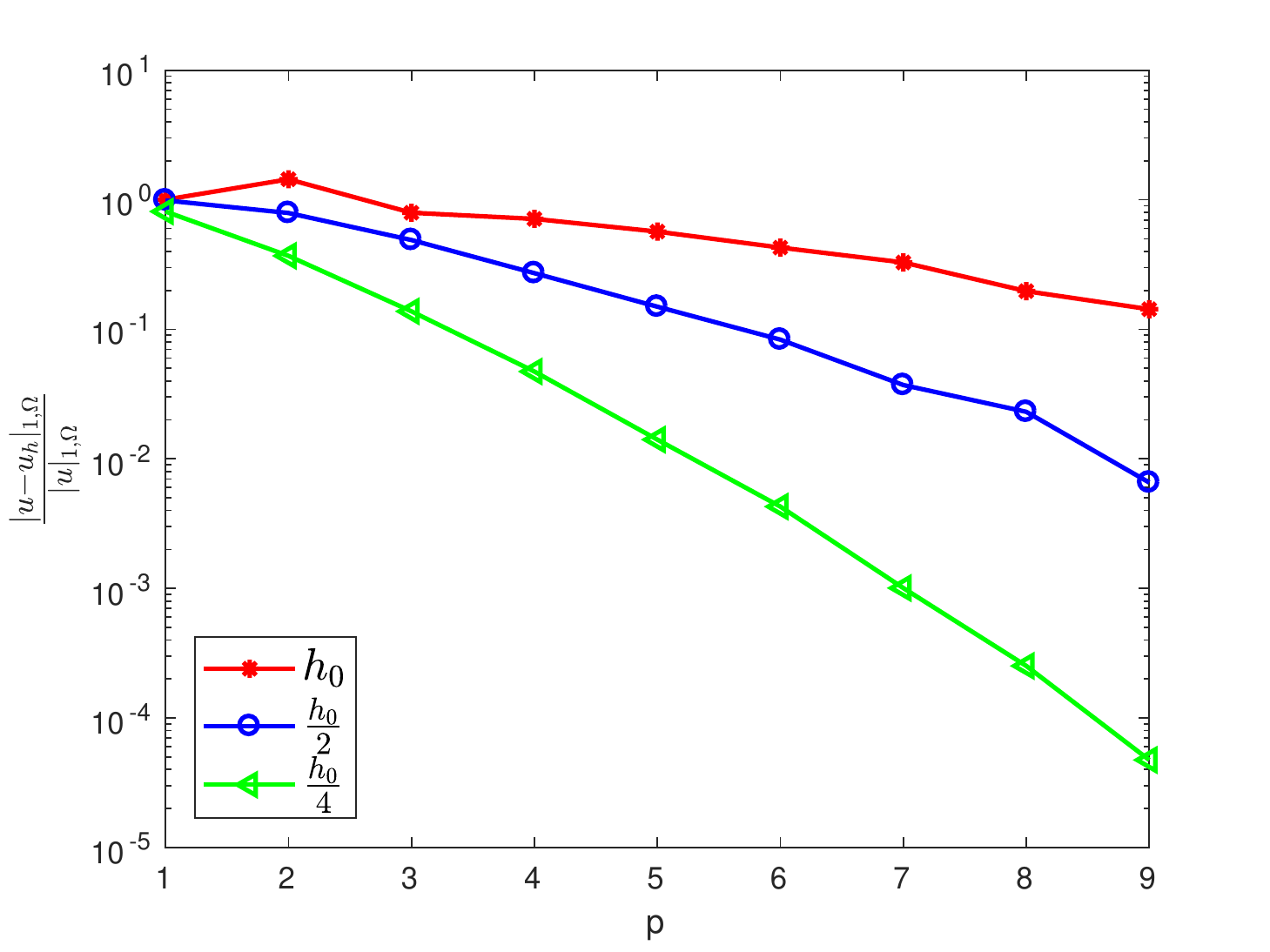}
\includegraphics[width=0.495\textwidth]{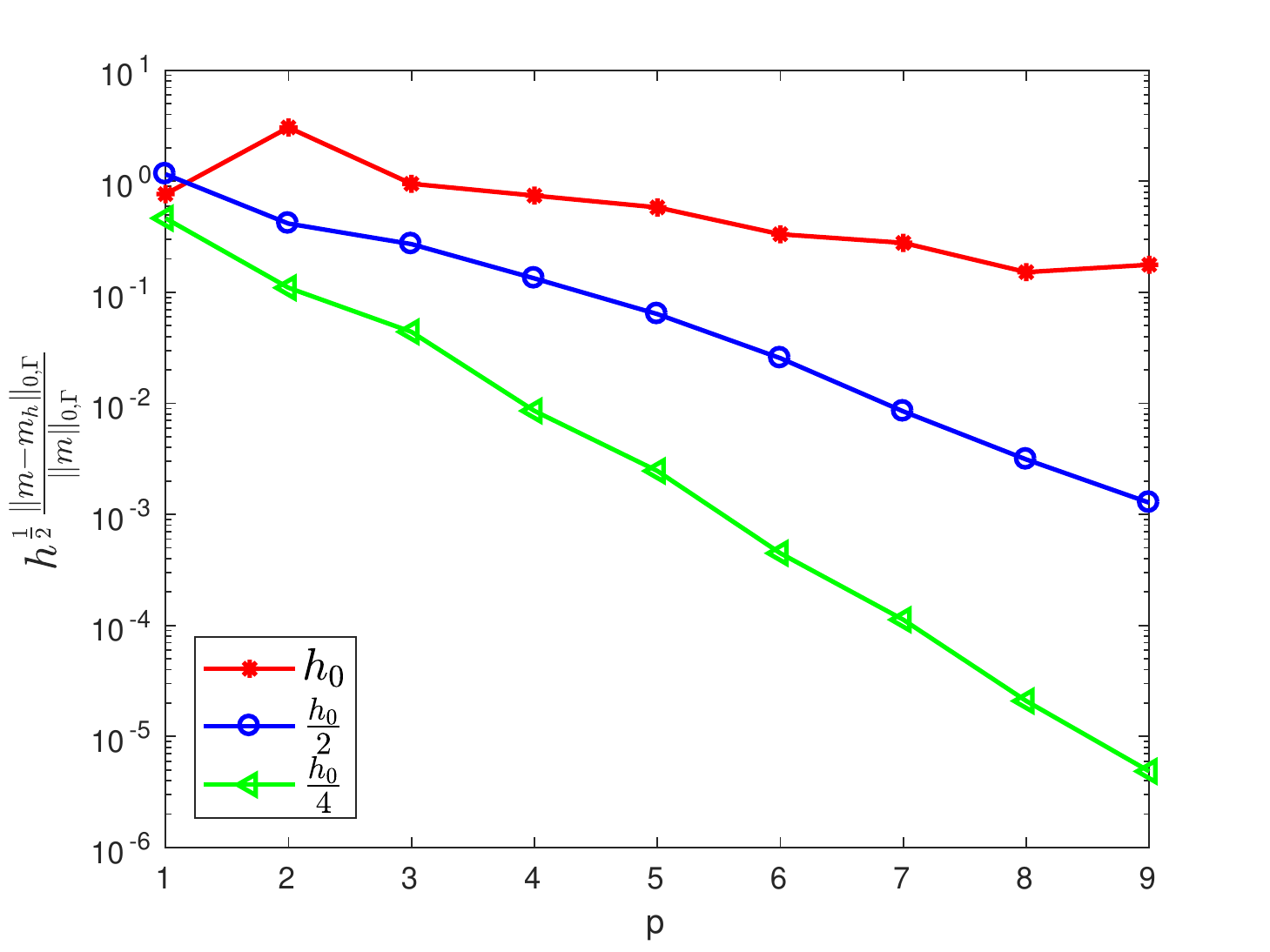}
\includegraphics[width=0.495\textwidth]{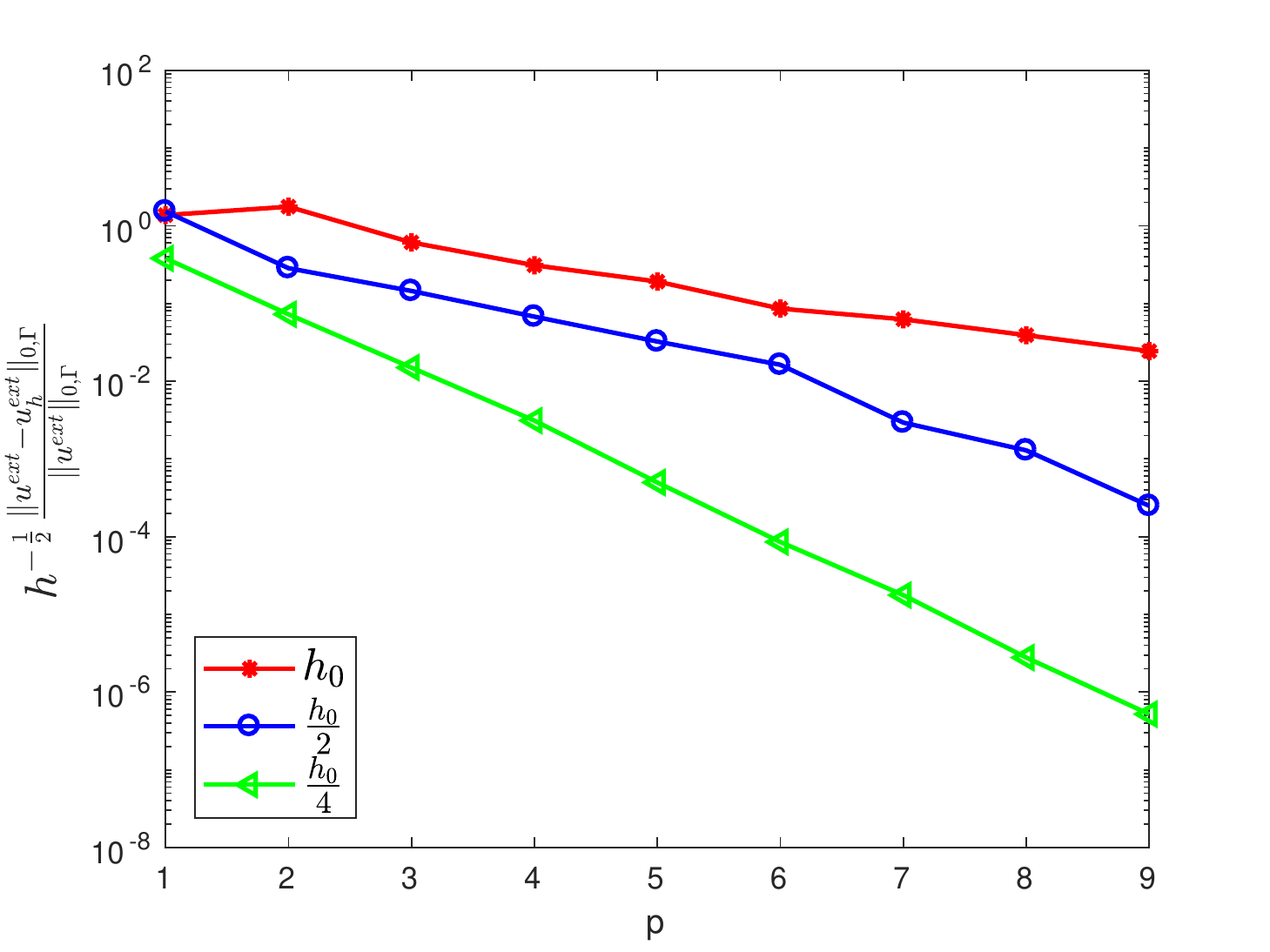}
\end{center}
\caption{The four panels show the four errors in~\eqref{computed:quantities} for different meshes versus the polynomial degree $p$.
The wavenumber $\k=3 \sqrt 3  \pi$ is a Dirichlet-Laplace eigenvalue. $\Ad$ is constant.  The solution is provided in~\eqref{testcase1:solution}.
\textbf{Top-left panel:} $H^1$ error in~$\Omega$.
\textbf{Top-right panel:} $L^2$ error in~$\Omega$.
\textbf{Bottom-left panel:} $L^2$ error on~$\Gamma$ of the mortar variable times $\h^{\frac{1}{2}}$.
\textbf{Bottom-right panel:} $L^2$ error on~$\Gamma$ times $\h^{-\frac{1}{2}}$.}
\label{fig:p:version}
\end{figure}

For all choices of the mesh, the method converge exponentially in terms of the polynomial degree~$\p$. This is reasonable, due to the piecewise smoothness of the solution~\eqref{testcase2:solution}.
We stress that the decay of the error is extremely slow when employing the coarsest mesh.

\paragraph*{Test case 2: $\h$-version.}
Here, we investigate the performance of the $\h$-version of the method for the case of piecewise smooth diffusion coefficient~$\Ad$. In particular, we assume that
\[
\Ad = 
\begin{cases}
2 & \text{in } \widehat \Omega := (-0.2,0.2)^3\\
1 & \text{in } \Omega \setminus \widehat \Omega.\\
\end{cases}
\]
We fix $k = \sqrt{3}\pi$.
We prescribe the solution as 
\begin{equation} \label{testcase2:solution}
u(x,y,z) = 
\begin{cases}
\sin^2 \left( \frac{5\pi}{2} (x-0.2)  \right) \sin^2 \left( \frac{5\pi}{2} (y-0.2)  \right) \sin^2 \left( \frac{5\pi}{2} (z-0.2)  \right) & \text{in } \Omega\\
\frac{e^{i k r}}{r} & \text{in } \Omegap \cup \Gamma. \\
\end{cases}
\end{equation}
We consider meshes that are conforming with respect to the diffusion parameter, i.e., on each element of the tetrahedral mesh, $\Ad$ is constant; see Figure~\ref{fig:discontinuous} for the results.
\begin{figure}[h]
\begin{center}
\includegraphics[width=0.495\textwidth]{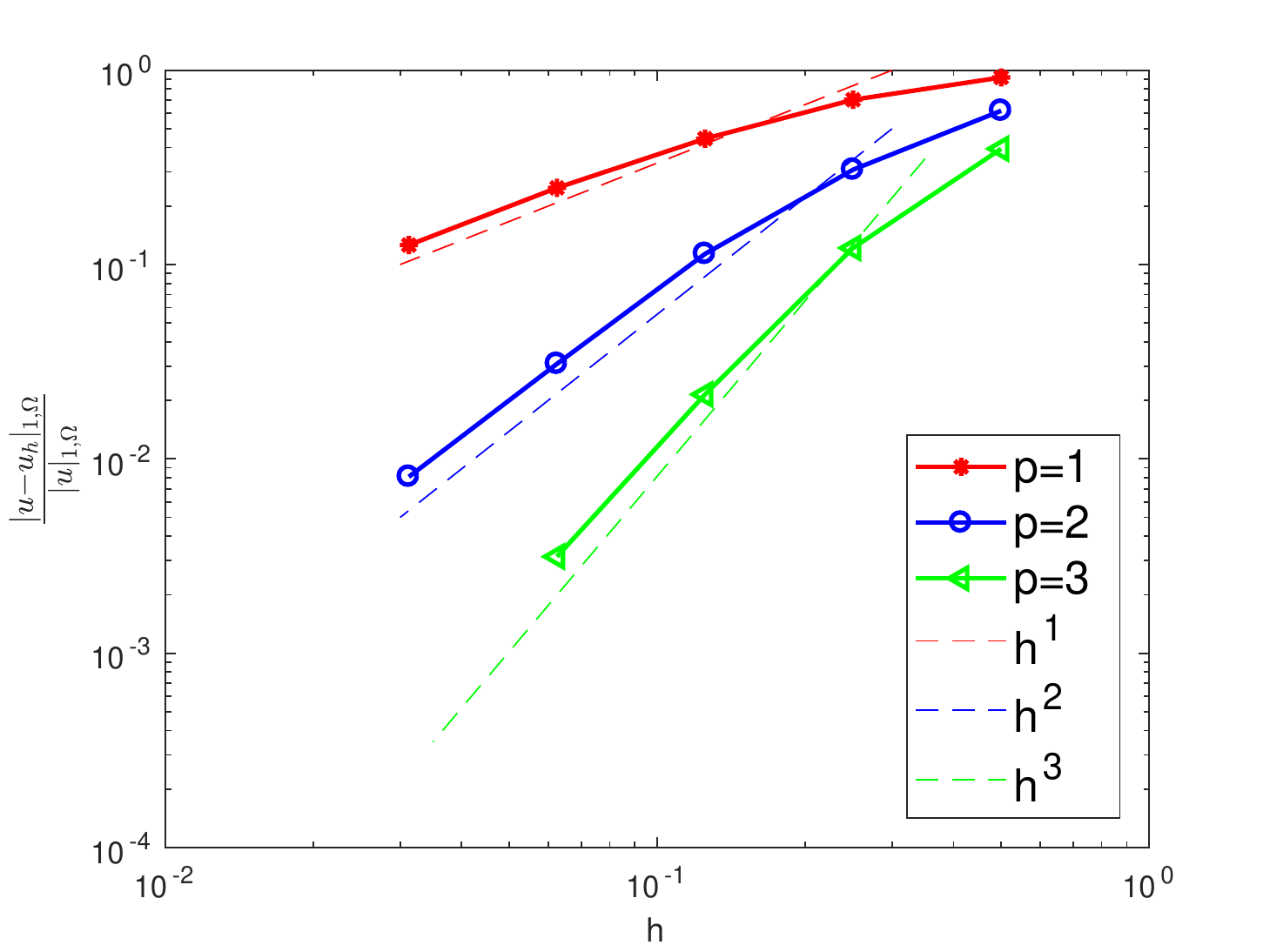}
\includegraphics[width=0.495\textwidth]{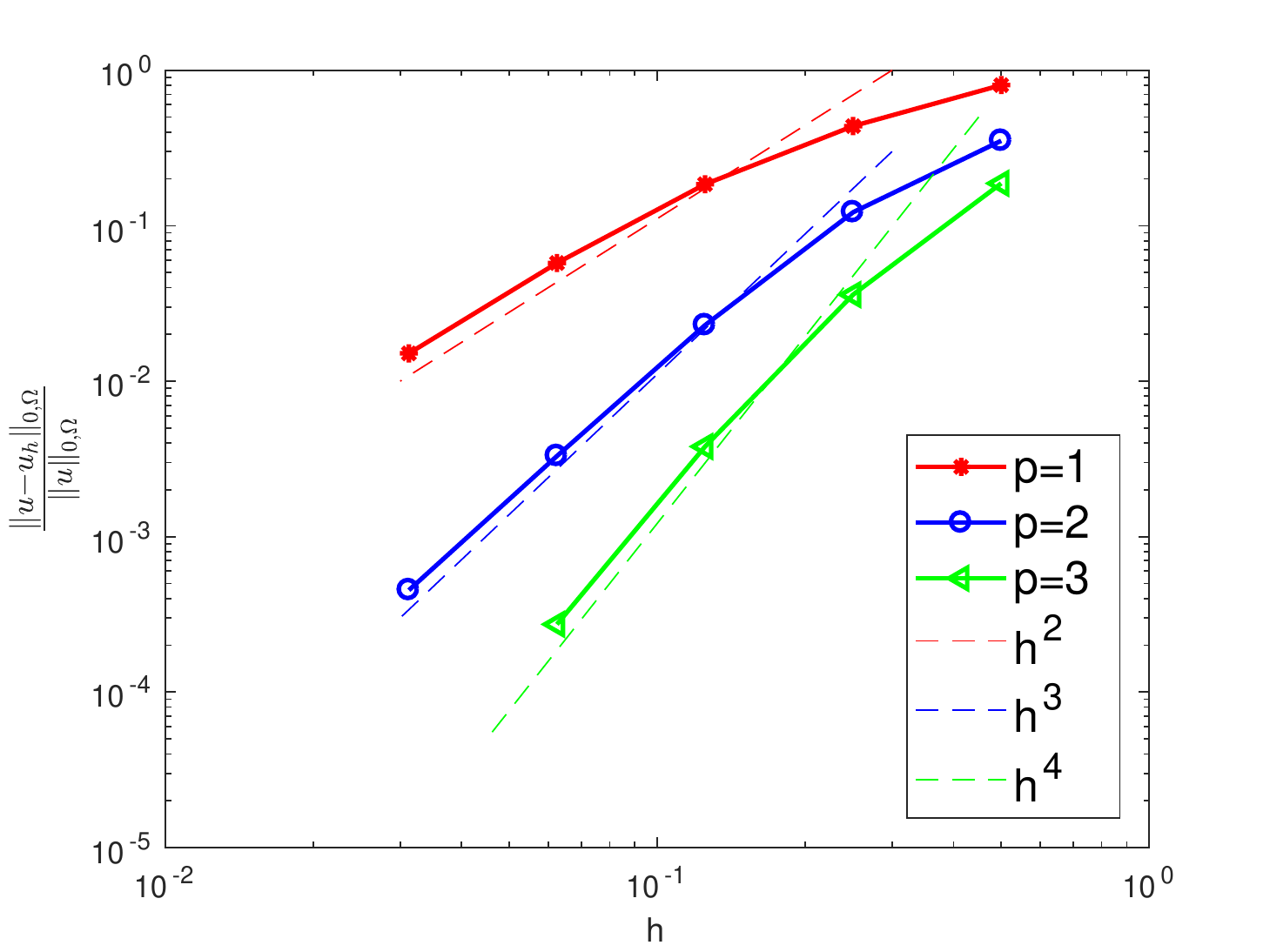}
\includegraphics[width=0.495\textwidth]{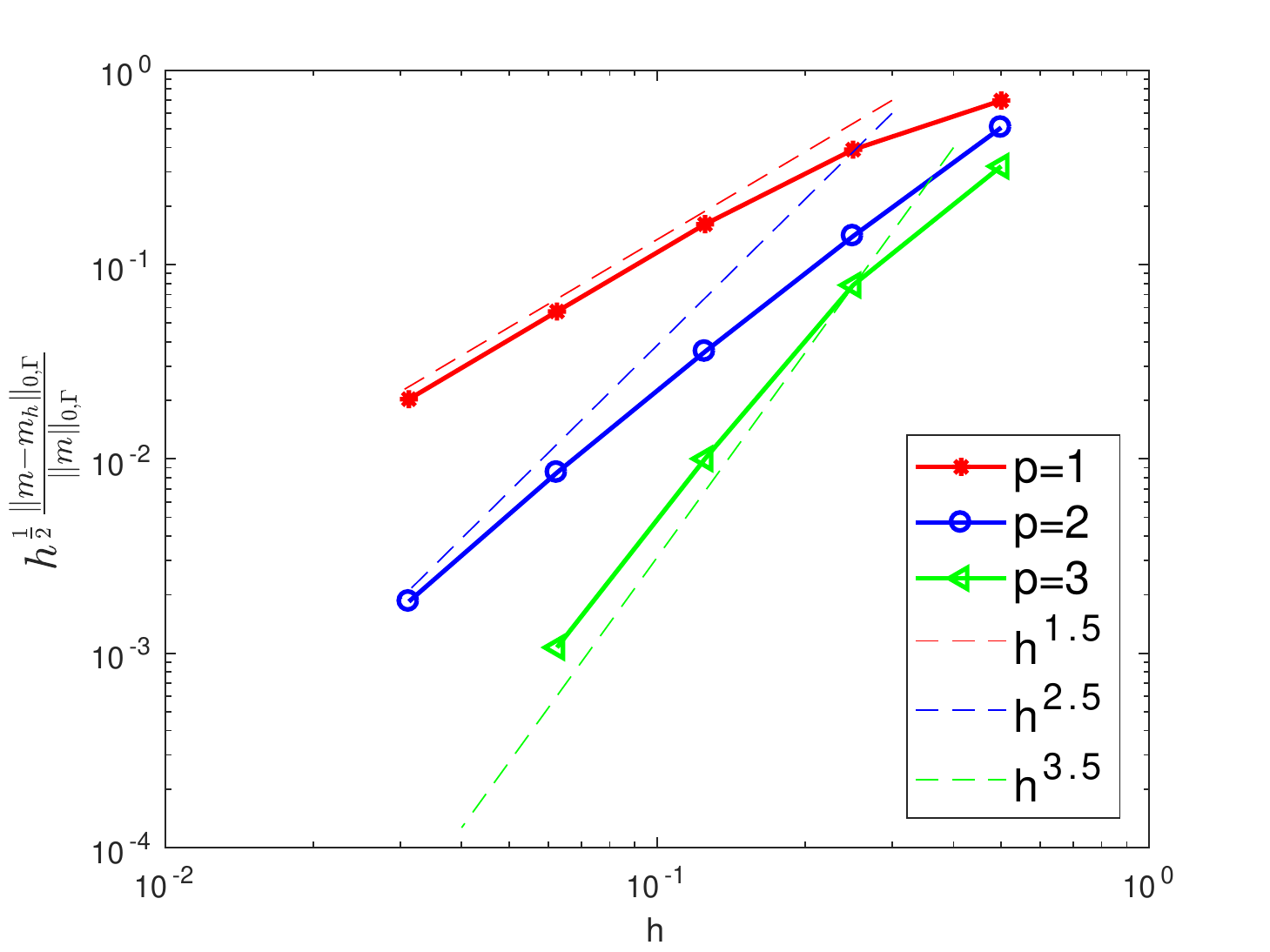}
\includegraphics[width=0.495\textwidth]{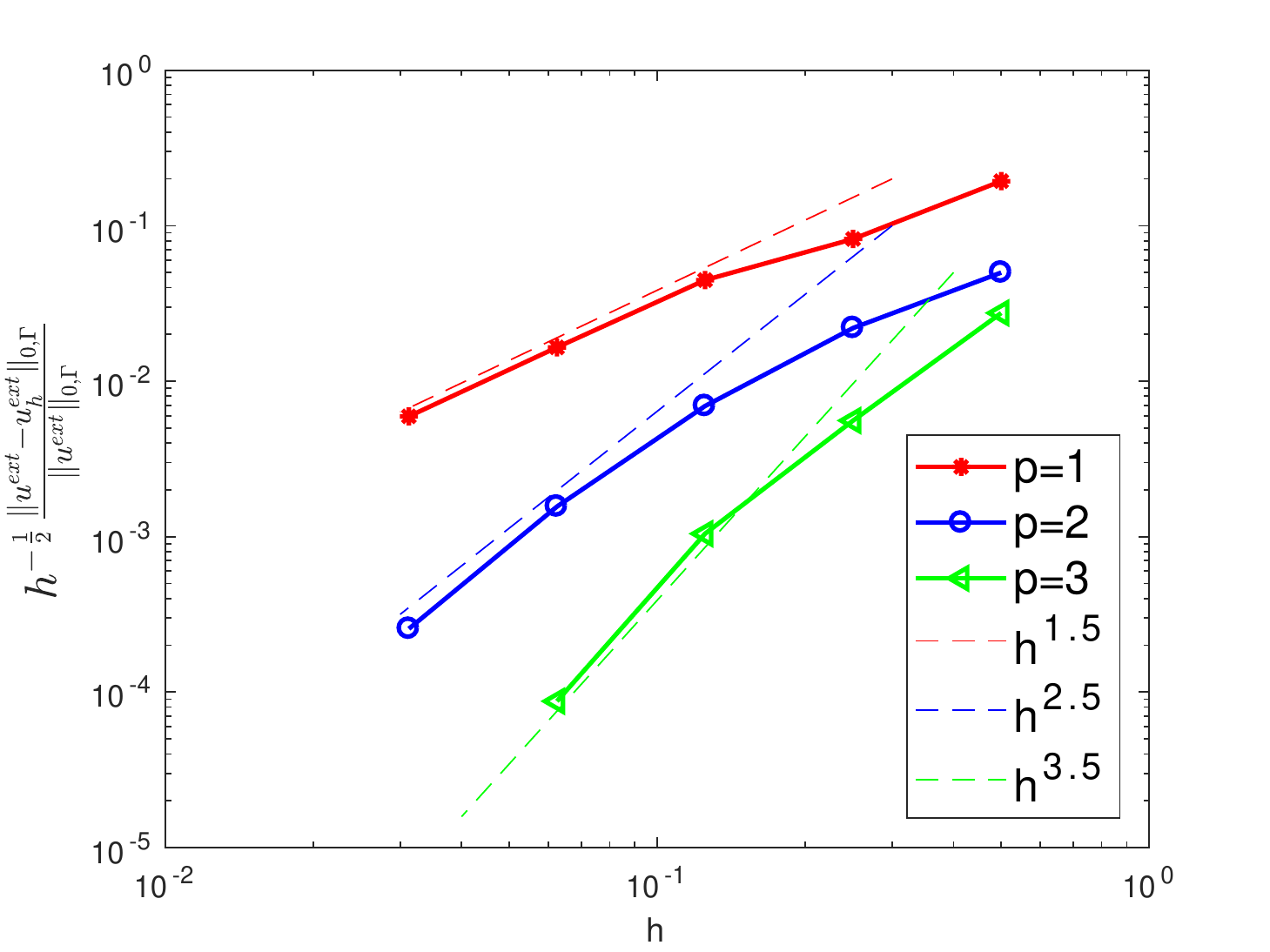}
\end{center}
\caption{We depict the four errors in~\eqref{computed:quantities} in the four panel, for different choices of the mesh, versus the polynomial degree.
The wavenumber $\k=3 \sqrt 3  \pi$ is a Dirichlet-Laplace eigenvalue. The diffusion parameter~$\Ad$ is \emph{piecewise} constant.  The solution is provided in~\eqref{testcase2:solution}.
\textbf{Top-left panel:} $H^1$ error in~$\Omega$.
\textbf{Top-right panel:} $L^2$ error in~$\Omega$.
\textbf{Bottom-left panel:} $L^2$ error on~$\Gamma$ of the mortar variable times $\h^{\frac{1}{2}}$.
\textbf{Bottom-right panel:} $L^2$ error on~$\Gamma$ times $\h^{-\frac{1}{2}}$.}
\label{fig:discontinuous}
\end{figure}

Owing to the fact that the tetrahedral mesh is conforming with respect to the discontinuity of~$\Ad$, the method converges optimally for polynomial degree~$\p=1$, $2$, and~$3$;
see Remark~\ref{remark:diffusion}.

\paragraph*{Implementation issues.}
We briefly discuss here some implementation issues.

First of all, we observe that the linear system stemming from method~\eqref{FEM-BEM} has the following form:
\begin{equation} \label{explicit:system}
  \begin{bmatrix}
    \mathbf A & \mathbf{B_1} & \mathbf{0}\\
    \mathbf{0} & \mathbf{B_2} & \mathbf{B_3} \\
    \mathbf{B_4} & \mathbf{B_5} & \mathbf{B_6}
  \end{bmatrix}
  \begin{bmatrix}
  \overrightarrow{\uh}\\
  \overrightarrow{\mh}\\
  \overrightarrow{\uhext}\\
  \end{bmatrix}
  = \begin{bmatrix}
  \overrightarrow{\f_\h}\\
  \overrightarrow{0}\\
  \overrightarrow{0}\\
  \end{bmatrix},
\end{equation}
where~$\overrightarrow{\uh}$, $\overrightarrow{\mh}$, and~$\overrightarrow{\uhext}$ denote the vector of degrees of freedom associated with~$\uh$, $\mh$, and~$\uhext$, respectively,
where the matrix on the left-hand side is defined by the sesquilinear forms on the left-hand side of~\eqref{FEM-BEM},
and where the vector~$\overrightarrow {\f_\h}$ is defined by the sesquilinear form on the right-hand side of~\eqref{FEM-BEM}.

Importantly, the matrix~$\mathbf A$ is associated with a sesquilinear form with entries in conforming finite element spaces.
All the other matrices, i.e. the $\mathbf{B_i}$ with $i=1,\dots,6$, are associated with sesquilinear form having at least one entry in boundary element spaces.
The assembly of the system is performed by combining the two libraries BEM++~\cite{BEM++paper} and NGSolve~\cite{NGSolve}.

Having at our disposal system~\eqref{explicit:system}, we first write~$\overrightarrow{\uh}$ in terms of~$\overrightarrow{\mh}$. This can be done by means of the LU decomposition that the solver Pardiso \cite{schenk2004solving} provides within NGSolve:
\begin{equation} \label{Schur}
\overrightarrow {\uh } = \mathbf A ^{-1} (\mathbf B_1 \overrightarrow {\mh}).
\end{equation}
We substitute~$\overrightarrow {\uh}$ in the second and third ``lines'' of the system.
Note that the resulting system is considerably smaller than the original one, especially for high polynomial degree. In fact, we have a system associated only with boundary element degrees of freedom. 

The solution of such system is successively computed with GMRES, preconditioned with an approximate LU decomposition based on the ${\mathcal H}$-matrix arithmetic provided by the library H2Lib~\cite{h2lib}. 
We have fixed the tolerance~$10^{-8}$ and a maximum number of~$2000$ iterations.

Since we showed that it is possible to proceed by Schur complement~\eqref{Schur}, it is clear that the computational effort needed to solve system~\eqref{explicit:system}
is comparable to that needed in a formulation without the additional mortar variable.

\begin{remark} \label{remark:transmission:discrete}
As a side remark, we observe that the system~\eqref{explicit:system} can be also rewritten by solving the second line in terms of~$\mh$ as
\[
\mh = \mathbf{B}_{\mathbf 2}^{-1} \mathbf{B_1} \uhext,
\]
and substitute~$\mh$ in the first and third equations, getting the discrete version of the transmission problem discussed in Remark~\ref{remark:transmission:continuous}.
\eremk
\end{remark}

\section{Conclusions} \label{section:conclusions}
We have presented a FEM-BEM coupling strategy for time harmonic
acoustic scattering in media with variable speed of sound.
The continuous problem has been formulated with the aid of an auxiliary mortar variable representing an impedance trace.
The novelty of this approach relies in this choice of the mortar variable, which leads to a block-structured system, with subblocks that are invertible, for any arbitrary choice of the coupling boundary.
The flexibility in the choice of the coupling boundary can be exploited to facilitate the meshing or the realization of relevant boundary integral operators.
The invertibility of the FEM and the BEM subblocks allows for the use of existing computationally tools for their numerical realization.
Stability and convergence of this FEM-BEM mortar method have been investigated theoretically and numerically.

\section*{Acknowledgements}
All authors have been funded by the Austrian Science Fund (FWF)
through the project F 65. M.~Melenk has been funded by the FWF
also through the project W1245. I. Perugia and A. Rieder have been
funded by the FWF also through the project P 29197-N32.

\appendix
\section{$k$-explicit continuity and G{\aa}rding inequality for analytic $\Gamma$}
\label{appendix:garding}

In this appendix, we consider the case of 
  analytic boundary $\Gamma$. For this case, based on the
  characterization of the difference
  operators $\Vk - \Vz$, $\Kk - \Kz$, $\Kprimek - \Kz'$, and $\Wk -
  \Wz$ established in~\cite{melenk2012mapping},
  we prove a $k$-explicit continuity assertion as well as a 
G{\aa}rding inequality in Theorem~\ref{thm:k-explicit-garding}.

Introduce the class of analytic functions 
\begin{align*}
\boldsymbol{\mathcal A}(C_1,\gamma_1,\Omega) &:= \{v \in C^\infty(\Omega)\,|\,
                                    \|\nabla^n v\|_{L^2(\Omega)} \leq
                                    C_1 \gamma_1^n \max\{n+1,k\}^{n+1}
\quad \forall n \in {\mathbb N}_0\},  
\end{align*}
where $|\nabla^n v|^2 = \sum_{\alpha \in {\mathbb N}_0^3: |\alpha| = n} 
\frac{n!}{\alpha!} 
|D^\alpha v|^2$. 

The following lemma decomposes the operators $\Vk - \Vz$, $\Kk-\Kz$, $\Kprimek-\Kz^\prime$, $\Wk-\Wz$ into 
a part that has a finite shift property and a part that maps into the
class of analytic functions. 

\begin{lem}
\label{lemma:decomposition}
Let $\Gamma$ be analytic and $k \ge k_0 > 0$.
Then there are
bounded linear operators 
$\SV$, $\SK$, $\SKprime$, $\SW$ and linear maps 
$\AVtilde: H^{-\frac32}(\Gamma) \rightarrow C^\infty(\overline\Omega)$, 
$\AKtilde: H^{-\frac12}(\Gamma) \rightarrow C^\infty(\overline\Omega)$
such that 
\begin{subequations}
\label{eq:differences}
\begin{align}
\label{eq:differences-a}
\Vk - \Vz & = \SV+ \gammazint \AVtilde, \\
\label{eq:differences-b}
\Kprimek - \Kz' & = \SKprime + \gammaoint \AVtilde, \\ 
\label{eq:differences-c}
\Kk - \Kz & = \SK + \gammazint \AKtilde,  \\
\label{eq:differences-d}
\Wk - \Wz & = \SW - \gammaoint \AKtilde.
\end{align}
\end{subequations}
For $s \ge -1$ 
the operators $\SV$, $\SK$, $\SKprime$, $\SW$, $\AVtilde$, $\AKtilde$ 
have, for constants $C_{s,s'}$, $C_{\V}$, $C_{\K}$, $\gamma_{\V}$,
  $\gamma_{\K}> 0$ independent of $k \ge k_0$, the 
mapping properties 
\begin{subequations}
\label{eq:estimates-S}
\begin{align}
\label{eq:estimates-S-a}
\|\SV \|_{H^{-\frac12+s'}(\Gamma)\leftarrow H^{-\frac12+s}(\Gamma)} 
\leq C_{s,s'} k^{-(1+s-s')}, \qquad 
1/2 < s' \leq  s+3, \\
\label{eq:estimates-S-b}
\|\SKprime \|_{H^{-\frac32+s'}(\Gamma)\leftarrow H^{-\frac12+s}(\Gamma)}
\leq C_{s,s'} k^{-(1+s-s')},  \qquad 
3/2 < s' \leq  s +3, \\
\label{eq:estimates-S-c}
\|\SK \|_{H^{-\frac12+s'}(\Gamma)\leftarrow H^{+\frac12+s}(\Gamma)} 
\leq C_{s,s'} k^{-(1+s-s')}, \qquad 
1/2 < s' \leq  s+3, \\
\label{eq:estimates-S-d}
\|\SW \|_{H^{-\frac32+s'}(\Gamma)\leftarrow H^{+\frac12+s}(\Gamma)} 
\leq C_{s,s'} k^{-(1+s-s')}, \qquad 
3/2 < s' \leq  s +3, \\ 
\label{eq:estimates-S-e}
\AVtilde \varphi   \in \boldsymbol {\mathcal A}(C_{\V} \|\varphi\|_{H^{-\frac{3}{2}}(\Gamma)}, \gamma_{\V}, \Omega) 
\qquad \forall \varphi \in H^{-\frac{3}{2}}(\Gamma), \\
\label{eq:estimates-S-f}
\AKtilde \psi   \in \boldsymbol {\mathcal A}(C_{\K} \|\psi\|_{H^{-\frac{1}{2}}(\Gamma)}, \gamma_{\K}, \Omega)
\qquad \forall \psi \in H^{-\frac{1}{2}}(\Gamma). 
\end{align}
\end{subequations}
\end{lem}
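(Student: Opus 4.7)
The plan is to build the decompositions from potential-theoretic identities, using the fact that the difference potentials $\Vtildek \varphi - \widetilde{\V}_0 \varphi$ and $\Ktildek \psi - \widetilde{\K}_0 \psi$ satisfy Helmholtz transmission problems with vanishing jumps, and then to invoke the $k$-explicit regularity splitting of \cite{melenk2012mapping} for such problems. Concretely, for $\varphi \in H^{-1/2}(\Gamma)$ set $u_\V := \Vtildek \varphi - \widetilde{\V}_0 \varphi$. Then $-\Delta u_\V - k^2 u_\V = -k^2 \widetilde{\V}_0 \varphi$ in ${\mathbb R}^3 \setminus \Gamma$ with $\llbracket u_\V \rrbracket_\Gamma = 0$ and $\llbracket \partial_{\nGamma} u_\V \rrbracket_\Gamma = 0$. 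Similarly, for $\psi \in H^{1/2}(\Gamma)$ set $u_\K := \Ktildek \psi - \widetilde{\K}_0 \psi$. The assertions~\eqref{eq:differences} follow by taking the interior Dirichlet and conormal traces on $\Gamma$ once a suitable interior decomposition of $u_\V$ and $u_\K$ is available.

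The core input will be the regularity splitting from \cite[Thms.~{5.1}--{5.4}]{melenk2012mapping}, which decomposes the solution of a Helmholtz problem with analytic data and analytic boundary into the sum of an analytic part (with derivative bounds of the form $C \gamma^n \max\{n+1,k\}^{n+1}$, so lying in $\boldsymbol{\mathcal A}(C, \gamma, \Omega)$) and a finite-regularity part whose Sobolev norms enjoy a gain factor $k^{-1}$ per derivative shift. Applying this splitting to $u_\V|_\Omega$ produces
\[
u_\V|_\Omega = u_\V^{\mathcal A} + u_\V^{\mathcal R},
\]
with $u_\V^{\mathcal A} \in \boldsymbol{\mathcal A}(C_\V \|\varphi\|_{-3/2,\Gamma}, \gamma_\V, \Omega)$ and $u_\V^{\mathcal R}$ obeying $k$-explicit shift estimates of the form $\|u_\V^{\mathcal R}\|_{H^{s'+1/2}(\Omega)} \lesssim k^{-(1+s-s')} \|\varphi\|_{H^{s-1/2}(\Gamma)}$ for the relevant ranges of $s,s'$. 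Define $\AVtilde \varphi := u_\V^{\mathcal A}$; the analyticity estimate~\eqref{eq:estimates-S-e} is then immediate. Taking $\gammazint$ and $\gammaoint$ of the identity $u_\V = u_\V^{\mathcal R} + \AVtilde \varphi$ and rearranging yields~\eqref{eq:differences-a} and~\eqref{eq:differences-b} with $\SV \varphi := \gammazint u_\V^{\mathcal R}$ and $\SKprime \varphi := \gammaoint u_\V^{\mathcal R}$; the trace theorem combined with the shift bound on $u_\V^{\mathcal R}$ delivers~\eqref{eq:estimates-S-a} and~\eqref{eq:estimates-S-b}. The identities \eqref{eq:differences-c}, \eqref{eq:differences-d} and the bounds \eqref{eq:estimates-S-c}, \eqref{eq:estimates-S-d}, \eqref{eq:estimates-S-f} are obtained analogously from a splitting of $u_\K|_\Omega$, with the sign in \eqref{eq:differences-d} accounting for $\gammaoint \Ktildek \psi = -\Wk \psi$ (cf.~\eqref{hypersingular:operator}) and the analogous identity for $\widetilde{\K}_0$.

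The main obstacle is ensuring that the cited $k$-explicit splitting of \cite{melenk2012mapping} is applicable in the present setting and bookkeeping the shift exponents so that the gain $k^{-(1+s-s')}$ arises in the stated ranges $s' \in (1/2,s+3]$ (resp.\ $(3/2, s+3]$). The right-hand side of the transmission problem for $u_\V$ is $-k^2 \widetilde{\V}_0 \varphi$, which is analytic on $\overline{\Omega}$ up to the finite smoothness of $\varphi$ across $\Gamma$, but the jump data vanish, so one reduces to an interior (and separately an exterior) Helmholtz problem with analytic forcing of the form $k^2 \widetilde{\V}_0 \varphi$; this is precisely the setting in which the splitting of \cite{melenk2012mapping} supplies the analytic component $\AVtilde \varphi$ with the required $k$-dependence and a remainder $u_\V^{\mathcal R}$ whose regularity is bounded by that of the datum with a gain of one full derivative per factor $k^{-1}$. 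The upper limits $s+3$ in~\eqref{eq:estimates-S} reflect the intrinsic $H^{s+3}(\Omega)$ regularity proven in Proposition~\ref{proposition:properties:compact:operators}, and the lower limits $s' > 1/2$ (resp.\ $s' > 3/2$) come from the interior trace theorem on Lipschitz-analytic $\Gamma$. Once these indices are reconciled, the decomposition and estimates follow by linearity and the continuous dependence of $\widetilde{\V}_0 \varphi$, $\widetilde{\K}_0 \psi$ on their densities.
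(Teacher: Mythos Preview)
Your proposal is correct and takes essentially the same approach as the paper. Both arguments obtain the boundary-operator decompositions by taking the interior Dirichlet and Neumann traces of a potential-level splitting of $\Vtildek - \widetilde{\V}_0$ and $\Ktildek - \widetilde{\K}_0$ supplied by \cite{melenk2012mapping}; the paper simply cites \cite[Thms.~{5.3},~{5.4}]{melenk2012mapping} directly for that splitting, whereas you additionally spell out the transmission-problem mechanism that underlies it.
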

\begin{proof}
We stress that the functions 
$\AVtilde \varphi$ and 
$\AKtilde \psi$  are analytic in $\overline{\Omega}$ so that, when taking 
their traces in~\eqref{eq:differences}, the traces are analytic on~$\Gamma$. 

\cite[Thms.~{5.3}, {5.4}]{melenk2012mapping} assert for the potentials 
the representations 
$\Vtildek-\widetilde \V_0 = \SVtilde + \AVtilde$ and 
$\Ktildek-\widetilde \K_0 = \SVtilde + \AKtilde$, where 
the linear operators 
$\SVtilde: H^{-\frac32}(\Gamma) \rightarrow H^{3}(\Omega)$,  
$\SKtilde: H^{-\frac12}(\Gamma) \rightarrow H^{3}(\Omega)$, 
$\AVtilde:H^{-\frac32}(\Gamma) \rightarrow C^\infty(\overline\Omega)$, 
$\AKtilde:H^{-\frac12}(\Gamma) \rightarrow C^\infty(\overline\Omega)$
have the following mapping properties: 
for $s \ge -1$ and for $0 \leq s' \leq s+3$ 
and a constant $C > 0$ independent of $k$ there holds 
\begin{equation}
\|\SVtilde\|_{H^{s'}(\Omega) \leftarrow H^{-\frac12+s}(\Gamma)} 
\leq C k^{-1(1+s-s')}, 
\qquad   
\|\SKtilde\|_{H^{s'}(\Omega) \leftarrow H^{\frac12+s}(\Gamma)} 
\leq C k^{-1(1+s-s')}.
\end{equation}
For constants $C_{\V}$, $C_{\K}$, $\gamma_{\V}$,
  $\gamma_{\K}$ 
independent of $k$, one has 
$\AVtilde \varphi \in \boldsymbol {\mathcal A}(C_{\V} \|\varphi\|_{-\frac{3}{2},\Gamma}, \gamma_{\V},\Omega)$ and 
$\AKtilde \psi \in \boldsymbol {\mathcal A}(C_{\K} \|\psi\|_{-\frac{1}{2},\Gamma}, \gamma_{\K},\Omega)$. 
Applying the trace operator $\gammazint$ one obtains the representations
\eqref{eq:differences-a} and \eqref{eq:differences-c},
where the linear operators $\SV$, $\SKprime$, $\SK$, $\SW$ and the 
operators $\AVtilde$, $\AKtilde$ satisfy~\eqref{eq:estimates-S}.
The representations \eqref{eq:differences-b} and
  \eqref{eq:differences-d} are obtained similarly with the aid of $\gammaoint$.
\end{proof}

Based on the representations~\eqref{eq:differences} of the
  difference operators, we can prove the following $k$-explicit continuity and G{\aa}rding inequality.

\begin{thm}[$k$-explicit continuity and G{\aa}rding inequality for analytic $\Gamma$]
\label{thm:k-explicit-garding} 
Let $\Gamma$ be analytic and $k \ge k_0 > 0$. 
Then there are bounded linear operators
\begin{align*}
 \Theta_{f,\uext,\vext}\colon& H^{-\frac{1}{2}}(\Gamma) \rightarrow H^{\frac{1}{2}}(\Gamma)
& \Theta_{f,m,\vext}\colon & H^{-\frac{3}{2}}(\Gamma) \rightarrow H^{\frac{1}{2}}(\Gamma), \\
\Theta_{f,\uext,\lambda}\colon& H^{-\frac{1}{2}}(\Gamma) \rightarrow H^{\frac{3}{2}}(\Gamma), 
 & \Theta_{f,m,\lambda} \colon & H^{-\frac{3}{2}}(\Gamma) \rightarrow H^{\frac{3}{2}}(\Gamma) 
\end{align*}
(the subscript $f$ stands for ``finite shift properties'')
and
linear operators $\AVtilde':H^{-\frac{3}{2}}(\Gamma)
\rightarrow C^\infty(\overline{\Omega})$
$\AKtilde':H^{-\frac{1}{2}}(\Gamma)
\rightarrow C^\infty(\overline{\Omega})$,
such that, setting
\begin{subequations}
\label{eq:theta_A}
\begin{align}
\Theta_{{\mathcal A},\uext,\vext} & :=  -\gammaoint \AKtilde' +
i k \left(\gammazint \AKtilde' + \gammaoint \AVtilde'\right)  
+ k^2 \gammazint \AVtilde', 
\\
\Theta_{{\mathcal A},m,\vext} & :=  \gammaoint\AVtilde' - ik \gammazint \AVtilde', 
\\
\Theta_{{\mathcal A},\uext,\lambda} & := - (\gammazint \AKtilde' - ik \gammazint \AVtilde'), 
\\
\Theta_{{\mathcal A},m,\lambda} & := \gammazint \AVtilde '
\end{align}
\end{subequations}
(the subscript $\A$ stands for ``analytic'')
and
  \begin{align*}
\Theta_{\uext,\vext} & := 
\Theta_{f,\uext,\vext}  + \Theta_{{\mathcal A},\uext,\vext}, 
&\Theta_{m,\vext} & := 
\Theta_{f,m,\vext} + \Theta_{{\mathcal A},m,\vext}, \\
\Theta_{\uext,\lambda} & := 
\Theta_{f,\uext,\lambda} + \Theta_{{\mathcal A},\uext,\lambda}, 
&\Theta_{m,\lambda} & := 
\Theta_{f,m,\lambda}+ \Theta_{{\mathcal A},m,\lambda},
  \end{align*}
the following holds true: 

\begin{enumerate}[(i)]
\item 
\label{item:thm:k-explicit-garding-i}
For each $s \ge -1$, there holds 
\begin{align*}
\|\Theta_{f,\uext,\vext} \|_{H^{-\frac{3}{2}+s'}(\Gamma)\leftarrow H^{\frac{1}{2}+s}(\Gamma)} 
\leq C k^{-(1+s-s')}, \qquad 
3/2 < s' \leq s+3, \\
\|\Theta_{f,m,\vext} \|_{H^{-\frac{3}{2}+s'}(\Gamma)\leftarrow H^{-\frac{1}{2}+s}(\Gamma)}
\leq C k^{-(1+s-s')},  \qquad 
3/2 < s' \leq s +3, \\
\|\Theta_{f,\uext,\lambda}\|_{H^{-\frac{1}{2}+s'}(\Gamma)\leftarrow H^{\frac{1}{2} +s}(\Gamma)} 
\leq C k^{-(1+s-s')}, \qquad 
1/2 < s' \leq s+3, \\
\|\Theta_{f,m,\lambda} \|_{H^{-\frac{1}{2}+s'}(\Gamma)\leftarrow H^{-\frac{1}{2}+s}(\Gamma)} 
\leq C k^{-(1+s-s')}, \qquad 
1/2 < s' \leq  s +3,\\ 
\AVtilde' \varphi \in \boldsymbol {\mathcal A} \bigl(C_{\V} \|\varphi\|_{H^{-\frac{3}{2}}(\Gamma)},\gamma_{\V},\Omega\bigr) 
\quad\qquad \forall \varphi \in H^{-\frac{3}{2}}(\Gamma), \\
\AKtilde' \psi \in \boldsymbol {\mathcal A}\bigl(C_{\K} \|\psi\|_{H^{-\frac{1}{2}}(\Gamma)},\gamma_{\K},\Omega\bigr) 
\quad\qquad \forall \psi \in H^{-\frac{1}{2}}(\Gamma). 
\end{align*}
The constant $C$ depends only on $s$, $s'$, $\Gamma$, and $k_0$. The constants 
$C_{\V}$, $C_{\K}$, $\gamma_{\V}$, $\gamma_{\K}$ depend only on $k_0$ and $\Gamma$. 
\item 
\label{item:thm:k-explicit-garding-ii}
For a constant $c > 0$ depending only on $k_0$ and $\Gamma$, 
the sesquilinear form $\T(\cdot,\cdot)$ defined in~\eqref{form:T:for:Helmholtz} satisfies the G{\aa}rding inequality 
\begin{align*}
&\Re \left( \T\bigl( (v,\lambda,\vext),(v,\lambda,\vext)\bigr) 
+ \langle (v,\lambda,\vext),\Theta (v,\lambda,\vext)\rangle 
\right)\\
&\qquad\qquad\qquad \ge c  \|\Ad^{\frac12} \nabla u\|^2_{0,\Omega} + 
k^2\| n\, u\|^2_{0,\Omega} + 
\|\lambda\|^2_{-\frac{1}{2},\Gamma} + 
\|\vext\|^2_{\frac{1}{2},\Gamma}, 
\end{align*}
where the linear operator $\Theta$ is given by 
\begin{align*}
 & \langle (u,m,\uext),\Theta (v,\lambda,\vext)\rangle
  =  2 ((\k)^2 u,v)_{0,\Omega} \\
  \nonumber
  &\qquad\qquad\qquad
\mbox{} -
\langle \uext, \Theta_{\uext,\vext}\vext\rangle -
\langle m, \Theta_{m,\vext} \vext\rangle -
\langle \uext, \Theta_{\uext,\lambda} \lambda \rangle - 
\langle m, \Theta_{m,\lambda}\lambda \rangle.
\end{align*}
\item 
\label{item:thm:k-explicit-garding-iii}
For a constant $C_{cont} > 0$ depending only on $k_0$ and $\Gamma$, the 
sesquilinear form $\T(\cdot,\cdot)$ defined
  in~\eqref{form:T:for:Helmholtz} satisfies the continuity estimate
\begin{align*}
& \Bigl| \T\bigl( (u,m,\uext),(v,\lambda,\vext)\bigr) 
+ \langle (u,m,\uext),\Theta (v,\lambda,\vext)\rangle 
-
\langle (u,m,\uext),\widetilde\Theta (v,\lambda,\vext)\rangle \Big|\\
  &\qquad\qquad\qquad
\leq C_{cont} 
\Bigl\{ \|\Ad^{\frac12} \nabla u\|^2_{0,\Omega} + k \|u\|^2_{0,\Gamma} + \|m\|^2_{-\frac{1}{2},\Gamma} + \|\uext\|^2_{\frac{1}{2},\Gamma} 
\Bigr\}^{\frac12} \\
  &\qquad\qquad\qquad\qquad
    \times\Bigl\{ \|\Ad^{\frac12} \nabla v\|^2_{0,\Omega} + k \|v\|^2_{0,\Gamma} + \|\lambda\|^2_{-\frac{1}{2},\Gamma} + \|\vext\|^2_{\frac{1}{2},\Gamma} 
\Bigr\}^{\frac12},
\end{align*}
with a linear operator $\widetilde\Theta$ given by
  \begin{align*}
  &  \langle (u,m,\uext),\widetilde\Theta (v,\lambda,\vext)\rangle=
    \langle \uext, \widetilde{\widetilde \Theta}_{f,\uext,\vext} \vext\rangle\\
    &\qquad\qquad\qquad
      +\langle \uext, \widetilde \Theta_{f,\uext,\vext} \vext\rangle
      +\langle m, \widetilde\Theta_{f,m,\vext} \vext\rangle
      +\langle \uext, \widetilde\Theta_{f,\uext,\lambda} \lambda\rangle,
\end{align*}
where, for $s \ge 0$ and a constant $C>0$ independent  of $k$,
\begin{subequations}
\label{eq:tildetheta}
\begin{align}
\label{eq:tildetheta-a}
\|\widetilde{\widetilde\Theta}_{f,\uext,\vext}\|_{H^{\frac{1}{2} + s}(\Gamma)\leftarrow
H^{-\frac{1}{2} + s}(\Gamma)} & \leq C k^{2}, \\
\label{eq:tildetheta-b}
\|\widetilde\Theta_{f,\uext,\vext} \|_{H^{\frac{1}{2}+s}(\Gamma) \leftarrow H^{\frac{1}{2}+s}(\Gamma)}  & \leq C k, \\
\label{eq:tildetheta-c}
\|\widetilde\Theta_{f,m,\vext}\|_{H^{\frac{1}{2}+s}(\Gamma)\leftarrow H^{-\frac{1}{2}+s}(\Gamma)}   & \leq C k,\\
\label{eq:tildetheta-d}
\|\widetilde\Theta_{f,\uext,\lambda} \|_{H^{\frac{1}{2}+s}(\Gamma)\leftarrow H^{-\frac{1}{2}+s}(\Gamma)} &\leq C k. 
\end{align}
\end{subequations}
\end{enumerate}
\end{thm}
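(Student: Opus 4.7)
The proof rests on the decomposition of the four difference operators $\Vk-\Vz$, $\Kk-\Kz$, $\Kprimek-\Kprimez$, $\Wk-\Wz$ provided by Lemma~\ref{lemma:decomposition}, combined with the three-way splitting $\T = T_1 + T_2 + T_3$ of~\eqref{eq:T1T2T3} from the proof of Theorem~\ref{theorem:Garding:inequality}. From that proof I may use, with a constant depending only on $k_0$ and $\Omega$, the coercivity $\Re T_1 \gtrsim \|\Ad^{\frac12}\nabla v\|^2_{0,\Omega} + \|\lambda\|^2_{-\frac12,\Gamma} + \|\vext\|^2_{\frac12,\Gamma}$, the skew-Hermitian identity $\Re T_2 = 0$, and the fact that $T_3$ bundles all nine difference-operator pairings together with $-((\k)^2u,v)_{0,\Omega}$.

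For Part~(i) I would identify $\AVtilde'$, $\AKtilde'$ with $\AVtilde$, $\AKtilde$ from Lemma~\ref{lemma:decomposition} (up to the adjoint identifications forced by the duality pairings entering $T_3$) and take
\begin{align*}
  \Theta_{f,\uext,\vext} &:= \SW - ik(\SK + \SKprime) + k^2 \SV,
  & \Theta_{f,m,\vext} &:= -\SKprime - ik\SV, \\
  \Theta_{f,\uext,\lambda} &:= -\SK + ik\SV,
  & \Theta_{f,m,\lambda} &:= -\SV.
\end{align*}
Substituting~\eqref{eq:differences} into the nine pairings of $T_3$ and regrouping the finite-shift parts through the above and the potential parts through~\eqref{eq:theta_A} yields the identity
\begin{equation*}
  T_3 = -((\k)^2u,v)_{0,\Omega} + \langle\uext,\Theta_{\uext,\vext}\vext\rangle + \langle m,\Theta_{m,\vext}\vext\rangle + \langle\uext,\Theta_{\uext,\lambda}\lambda\rangle + \langle m,\Theta_{m,\lambda}\lambda\rangle.
\end{equation*}
The $k$-explicit bounds on each $\Theta_{f,*,*}$ then reduce to~\eqref{eq:estimates-S}: each prefactor $k^a$ with $a\in\{1,2\}$ in front of an $\S$-operator is compensated by downshifting the target regularity by $a$ units, since the $k^{-(1+s-s')}$-scaling of Lemma~\ref{lemma:decomposition} consumes exactly one power of $k$ per unit of downshift. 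The analyticity bounds on $\AVtilde'\varphi$ and $\AKtilde'\psi$ are inherited verbatim from Lemma~\ref{lemma:decomposition}.

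Part~(ii) then becomes essentially algebraic. Since $\Theta$ contributes $+2((\k)^2u,v)_{0,\Omega}$ and subtracts the four pairings identified above, one obtains $\T + \Theta = T_1 + T_2 + ((\k)^2u,v)_{0,\Omega}$. Evaluating at $(u,m,\uext)=(v,\lambda,\vext)$ and taking real parts, the coercivity of $\Re T_1$, the vanishing of $\Re T_2$, and $\Re((\k)^2v,v) = k^2\|n\,v\|^2_{0,\Omega}$ immediately yield the claimed G{\aa}rding inequality.

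For Part~(iii) the starting point is again $\T + \Theta = T_1 + T_2 + ((\k)^2u,v)_{0,\Omega}$. The summands of $T_1+T_2$ together with the residual mass term split naturally into two groups: those that are controlled by the product of the stated energy norms with a $k_0$-dependent but $k$-independent constant (the diffusion term $(\Ad\nabla u,\nabla v)_{0,\Omega}$, the impedance term $ik(u,v)_{0,\Gamma}$, the $\Vz$-, $\Wz$-, $\Kz$- and $\Kprimez$-pairings at their natural regularity, the trivial pairings $-\langle m,v\rangle$ and $\langle u,\lambda\rangle$, and the residual volume mass term treated by a trace/Poincar\'e-type argument), and those whose $k$-growth would otherwise contaminate the continuity constant (the $O(k^2)$-pairing $k^2\langle\Vz\uext,\vext\rangle$ and the four $O(k)$-pairings $-ik\langle\Kz\uext,\vext\rangle$, $-ik\langle\Kprimez\uext,\vext\rangle$, $ik\langle\Vz m,\vext\rangle$, $-ik\langle\Vz\uext,\lambda\rangle$). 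The latter are exactly the content of $\widetilde\Theta$: the $k^2$-growth is absorbed by $\widetilde{\widetilde\Theta}_{f,\uext,\vext}$ in accordance with~\eqref{eq:tildetheta-a}, while the $O(k)$-pieces define $\widetilde\Theta_{f,\uext,\vext}$, $\widetilde\Theta_{f,m,\vext}$, $\widetilde\Theta_{f,\uext,\lambda}$ in accordance with~\eqref{eq:tildetheta-b}--\eqref{eq:tildetheta-d}, where the smoothness of $\Gamma$ is used to take the target regularity as high as needed. The main obstacle throughout the three parts is the careful bookkeeping of $k$-factors against the shift indices in Lemma~\ref{lemma:decomposition}, so as not to lose any power of $k$; this is where the sharp scaling $k^{-(1+s-s')}$ really matters, because naive estimates on any single term would degrade it.
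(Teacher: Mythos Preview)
Your approach is essentially identical to the paper's: the same splitting $\T = T_1 + T_2 + T_3$, the same algebraic cancellation $\T + \Theta = T_1 + T_2 + ((\k)^2 u,v)_{0,\Omega}$ for Part~(ii), and the same isolation of the $k$-growing pieces of $T_1$ and $T_2$ into $\widetilde\Theta$ for Part~(iii).

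There is, however, a concrete bookkeeping error in your formulas for the $\Theta_{f,\cdot,\cdot}$. To write the pairings of $T_3$ in the form $\langle \uext,\Theta\,\vext\rangle$, $\langle m,\Theta\,\vext\rangle$, etc.\ with the operator acting on the \emph{second} (antilinear) slot, you must pass to adjoints; and since the pairing is antilinear in that slot, scalar prefactors get conjugated: for instance
\[
-ik\,\langle (\Kk-\Kz)\uext,\vext\rangle
= \langle \uext,\ \overline{(-ik)}\,(\Kk-\Kz)^*\vext\rangle
= \langle \uext,\ +ik\,(\Kk-\Kz)^*\vext\rangle .
\]
The paper handles this via Lemma~\ref{lemma:on:adjoint:operators}, obtaining representations of $(\Vk-\Vz)^*$, $(\Kk-\Kz)^*$, $(\Kprimek-\Kprimez)^*$, $(\Wk-\Wz)^*$ in terms of ``primed'' operators $\SV'\varphi:=\overline{\SV\overline\varphi}$, $\AVtilde'\varphi:=\overline{\AVtilde\overline\varphi}$, etc., which inherit the mapping and analyticity bounds of their unprimed counterparts. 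This yields, e.g., $\Theta_{f,\uext,\vext}=\SW'+ik(\SK'+\SKprime')+k^2\SV'$ and $\Theta_{f,m,\lambda}=\SV'$, whereas you wrote $\SW-ik(\SK+\SKprime)+k^2\SV$ and $-\SV$. Your ``up to adjoint identifications'' caveat does not license simply inserting the unprimed $\S$-operators with the original signs; several of your signs are wrong, and without the conjugation the decomposition $\Theta_{\cdot,\cdot}=\Theta_{f,\cdot,\cdot}+\Theta_{\mathcal A,\cdot,\cdot}$ does not match the $\Theta_{\mathcal A,\cdot,\cdot}$ prescribed in the statement. These are not conceptual gaps, but they must be fixed for the argument to go through as written.
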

\begin{proof}
The proof of Theorem~\ref{theorem:Garding:inequality} 
shows in~\eqref{eq:T1T2T3} that the sesquilinear form $\T(\cdot,\cdot)$ can be written as the 
sum of the sesquilinear form $T_1(\cdot,\cdot)$, $T_2(\cdot,\cdot)$,
$T_3(\cdot,\cdot)$.\medskip

\emph{Proof of~\eqref{item:thm:k-explicit-garding-ii}:}
As in the proof Theorem~\ref{theorem:Garding:inequality}, 
we have
\begin{equation*}
  \begin{split}
    \Re \left(T_1\big((v,\lambda,\vext),(v,\lambda,\vext)\bigr)\right) &\gtrsim 
\|\Ad^{\frac{1}{2}} \nabla v\|^2_{0,\Omega} + \|\lambda\|^2_{-\frac{1}{2},\Gamma}
+ \|\vext\|^2_{\frac{1}{2},\Gamma},\\
\Re \left(T_2\big((v,\lambda,\vext),(v,\lambda,\vext)\bigr)\right) &= 0.
  \end{split}
\end{equation*}
We therefore 
focus on $T_3(\cdot,\cdot)$. We rewrite $T_3(\cdot,\cdot)$ as 
\begin{align*}
  & T_3\bigl((u,m,\uext),(v,\lambda,\vext)\bigr)
     = 
\Bigl\{ 
-  ( (\k)^2 u, v)_{0,\Omega}  
\Bigr\}
\\ & 
\qquad+ \Bigl\{ 
 \langle \uext, (\Wk-\Wz)^* \vext\rangle 
- ik \langle   \uext, (  \Kk- \Kz  )^*\vext\rangle \\
&\qquad\qquad - ik \langle  \uext, (\Kprimek -\Kz^\prime)^*\vext\rangle  
+ k^2 \langle \uext, (\Vk -\Vz)^* \vext \rangle 
\Bigr\}
\\ & 
\qquad + \Bigl\{ 
 \langle m, (\Kprimek -\Kz^\prime)^* \vext\rangle 
+ i k \langle  \m, (\Vk -\Vz)^*, \vtilde   \rangle   
\Bigr\}
\\ & 
\qquad + \Bigl\{ 
-  \langle \uext, (\Kk-\Kz)^* \lambda \rangle 
- ik \langle  \uext,  (\Vk - \Vz)^* \lambda \rangle  
\Bigr\}
+ 
\Bigl\{ 
\langle \m, (\Vk - \Vz)^*\lambda\rangle
\Bigr\}  \\
& 
=: - ((\k)^2 u, v)_{0,\Omega} + 
\langle \uext, \Theta_{\uext,\vext} \vext\rangle + 
\langle m, \Theta_{m,\vext} \vext\rangle + 
\langle \uext, \Theta_{\uext,\lambda} \lambda \rangle + 
              \langle m, \Theta_{m,\lambda} \lambda \rangle\\
  &
   =((\k)^2 u, v)_{0,\Omega}-\langle (u,m,\uext),\Theta (v,\lambda,\vext)\rangle.
\end{align*}
Notice that
\begin{subequations}
\label{eq:th}
\begin{align}
 \label{eq:th-a} 
  \Theta_{\uext,\vext}
   &= (\Wk-\Wz)^*+ik (  \Kk- \Kz  )^*
                         +ik  (\Kprimek-\Kz^\prime)^*+k^2 (\Vk -
                         \Vz)^*, \\
  \label{eq:th-b} 
\Theta_{m,\vext} &=  (\Kprimek-\Kz^\prime)^*-ik (\Vk - \Vz)^*, \\
  \label{eq:th-c} 
 \Theta_{\uext,\lambda} &= -(  \Kk- \Kz  )^*+ik (\Vk - \Vz)^*,\\
  \label{eq:th-d} 
\Theta_{m,\lambda} &= (\Vk - \Vz)^*.
\end{align}
\end{subequations}
Lemma~\ref{lemma:on:adjoint:operators} and the representations~\eqref{eq:differences}
give 
\begin{subequations}
\label{eq:differences-adjoint}
\begin{align}
\label{eq:differences-adjoint-a}
(\Vk - \Vz)^* & = \SV' + \gammazint \AVtilde', & 
(\Kprimek - \Kz')^* & = \SKprime' + \gammaoint \AVtilde', \\ 
\label{eq:differences-adjoint-c}
(\Kk - \Kz)^* & = \SK' + \gammazint \AKtilde',  & 
(\Wk - \Wz)^* & = \SW' - \gammaoint \AKtilde',
\end{align}
\end{subequations}
where the superscript ${\ }^\prime$ indicates that, for a linear 
operator $\varphi \mapsto A \varphi$, the linear operator $A'$ is understood as 
$\varphi \mapsto \overline{A \overline{\varphi}}$.
Inserting~\eqref{eq:differences-adjoint} into~\eqref{eq:th}, and
  taking into account the definitions~\eqref{eq:theta_A} of the
  operators
  $\Theta_{\A,\cdot,\cdot} $ mapping into classes of analytic functions,
we have the decompositions $\Theta_{\cdot,\cdot}
=\Theta_{f,\cdot,\cdot} +\Theta_{\A,\cdot,\cdot} $ with
\begin{align*}
\Theta_{f,\uext,\vext} & =  \SW' +
i k \left(\SK' + \SKprime'\right)  + k^2 \SV', 
\\
\Theta_{f,m,\vext} & = \SKprime' - ik \SV', 
\\
\Theta_{f,\uext,\lambda} & = - (\SK' - ik \SV')  
\\
\Theta_{f,m,\lambda} &= \SV'.
\end{align*}
The G{\aa}rding inequality stated in~\eqref{item:thm:k-explicit-garding-ii} is shown.
\medskip

\emph{Proof of~\ref{item:thm:k-explicit-garding-i}:} The
  expressions for the operators $\Theta_{f,\cdot,\cdot}$ obtained above and the
mapping properties of the operators 
$\SV$, $\SK$, $\SKprime$, and $\SW$ given in 
Lemma~\ref{lemma:decomposition} imply  
\begin{align*}
\|
\Theta_{f,\uext,\vext}\|_{H^{-\frac{3}{2}+s'}(\Gamma)\leftarrow
H^{\frac{1}{2}+s}(\Gamma)} & \lesssim 
k^{-(1+s-s')} + k k^{-(1+s-(s'-1))} +  k k^{-(1+(s+1)-s')} 
+ k^2 k^{-(1+(s+1)-(s'-1))} \\
& \lesssim k^{-(1+s-s')}, 
\qquad 3/2 < s' \leq s+3, \\
\|
\Theta_{f,m,\vext}\|_{H^{-\frac{3}{2}+s'}(\Gamma)\leftarrow
H^{-\frac{1}{2}+s}(\Gamma)} & \lesssim 
k^{-(1+s-s')} + k k^{-(1+s-(s'-1))} \lesssim  k^{-(1+s-s')}, 
\qquad 3/2 < s' \leq s+3, \\
\|\Theta_{f,\uext,\lambda}\|_{H^{-\frac{1}{2}+s'}(\Gamma)\leftarrow
H^{\frac{1}{2}+s}(\Gamma)} & \lesssim 
k^{-(1+s-s')} + k k ^{-(1+(s+1)-s')} \lesssim k^{-(1+s-s')} 
\qquad 1/2 < s' \leq s+3, \\
\|\Theta_{f,m,\lambda}\|_{H^{-\frac{1}{2}+s'}(\Gamma)\leftarrow
H^{-\frac{1}{2}+s}(\Gamma)} & \lesssim 
k^{-(1+s-s')}, 
\qquad 1/2 < s' \leq s+3. 
\end{align*}
This shows that the functions 
$\Theta_{f,\uext,\vext}$, 
$\Theta_{f,m,\vext}$, 
$\Theta_{f,\uext,\lambda}$, 
$\Theta_{f,m,\lambda}$ have the mapping properties
stated in~\eqref{item:thm:k-explicit-garding-i}. \medskip

\emph{Proof of~\eqref{item:thm:k-explicit-garding-iii}:} 
From the representation~\eqref{eq:T1T2T3}
of $\T(\cdot,\cdot)$ in terms of the sesquilinear forms 
$T_1(\cdot,\cdot)$, $T_2(\cdot,\cdot)$, $T_3(\cdot,\cdot)$
and the definition of $\Theta$, we get  
  \[
    \begin{split}
    &\T\left((u,m,\uext),(v,\lambda,\vext)\right)
    +\langle (u,m,\uext), \Theta(v,\lambda,\vext)\rangle\\
    &\qquad =T_1\left((u,m,\uext),(v,\lambda,\vext)\right)
    +T_2\left((u,m,\uext),(v,\lambda,\vext)\right)
    +((\k)^2u,v)_{0,\Omega}.
    \end{split}
    \]
We therefore
concentrate on the terms 
\begin{align*}
T_1\bigl((u,m,\uext),(v,\lambda,\vext)\bigl) & =\Bigl\{ (\Ad \nabla u, \nabla v )_{0,\Omega} 
+ \langle \Wz\uext,\vext\rangle 
+ k^2 \langle \Vz \uext,\vext\rangle  
+ \langle \Vz m, \lambda \rangle 
\Bigr\},  \\
T_2 \bigl((u,m,\uext),(v,\lambda,\vext)\bigr) & = 
\Bigl\{ ik( u, v)_{0,\Gamma} 
- i k \langle \Kz \uext,\vext\rangle 
- i k \langle \Kz^\prime \uext,\vext\rangle 
+\langle (\frac{1}{2}+\Kz^\prime) m,\vext\rangle 
\\ 
\nonumber 
& 
\quad+ i k \langle \Vz m,\vext\rangle 
- ik\langle \Vz \uext, \lambda\rangle 
- \langle (\frac{1}{2} + \Kz) \uext, \lambda\rangle 
- \langle \m, v \rangle 
 +  \langle u,\lambda \rangle 
\Bigr\}.  
\end{align*}
We start with estimating $T_1(\cdot,\cdot)$. 
We introduce $\widetilde{\widetilde\Theta}_{f,\uext,\vext}$ by 
\begin{equation}
\langle \uext, \widetilde{\widetilde\Theta}_{f,\uext,\vext} \vext \rangle 
 = k^2 \langle \uext, \Vz \vext\rangle
\end{equation}
and note that $\widetilde{\widetilde\Theta}_{f,\uext,\vext}$ has mapping property 
given in~\eqref{eq:tildetheta-a}. 
We bound 
\begin{align*}
| T_1\bigl((u,m,\uext),(v,\lambda,\vext)\bigl) 
- \langle \uext, \widetilde{\widetilde\Theta}_{f,\uext,\vext}\vext\rangle | &\lesssim
\|\Ad^{\frac12} \nabla u\|_{0,\Omega}  
\|\Ad^{\frac12} \nabla v\|_{0,\Omega}  \\
&\ + |\uext|_{\frac{1}{2},\Gamma} 
  |\vext|_{\frac{1}{2},\Gamma} 
+ \|m \|_{-\frac{1}{2},\Gamma} 
\|\lambda \|_{-\frac{1}{2},\Gamma}.  
\end{align*}
We turn to $T_2(\cdot,\cdot)$. We introduce 
$\widetilde\Theta_{f,\uext,\vext}$,  
$\widetilde\Theta_{f,m,\vext}$,  and
$\widetilde\Theta_{f,\uext,\lambda}$ by 
\begin{align*}
\langle \uext, \widetilde \Theta_{f,\uext,\vext} \vext \rangle 
 & = -i k \langle \uext, (\Kz + \Kz^\prime) \vext\rangle, \\
\langle m, \widetilde \Theta_{f,m,\vext} \vext\rangle & = 
ik \langle m, \Vz \vext \rangle, \\
\langle \uext, \widetilde \Theta_{f,\uext,\lambda} \uext\rangle & = 
- ik \langle \uext, \Vz \lambda\rangle,
\end{align*}
and note that 
$\widetilde\Theta_{f,\uext,\vext}$,  
$\widetilde\Theta_{f,m,\vext}$,  and
$\widetilde\Theta_{f,\uext,\lambda}$ have the mapping properties given in~\eqref{eq:tildetheta}. We have 
\begin{align*}
& | T_2\bigl((u,m,\uext),(v,\lambda,\vext)\bigl)  - 
\langle \uext, \widetilde\Theta_{f,\uext,\vext} \vext \rangle - 
\langle m, \widetilde\Theta_{f,m,\vext} \vext \rangle - 
\langle \uext, \widetilde\Theta_{f,\uext,\lambda} \lambda\rangle 
| \\
& \lesssim 
k\|u\|_{0,\Gamma} \|v\|_{0,\Gamma} + \|m\|_{-\frac{1}{2},\Gamma}\|\vext\|_{\frac{1}{2},\Gamma} 
+ \|\uext\|_{\frac{1}{2},\Gamma}\|\lambda\|_{-\frac{1}{2},\Gamma} + 
\|m\|_{-\frac{1}{2},\Gamma} \|v\|_{\frac{1}{2},\Gamma}+ 
\|\lambda\|_{-\frac{1}{2},\Gamma} \|u\|_{\frac{1}{2},\Gamma}. 
\end{align*}
The estimate given in~\eqref{item:thm:k-explicit-garding-iii} follows. 
\end{proof}
The G\aa rding inequality and the continuity estimate
of Theorem~\ref{thm:k-explicit-garding}, \eqref{item:thm:k-explicit-garding-ii} and~\eqref{item:thm:k-explicit-garding-iii},
can be 
simplified to be get some fully $k$-explicit bounds, as shown in
  the following corollary.
\begin{cor}
\label{cor:k-explicit-garding}
Let $\Gamma$ be analytic. Then there are $c$, $ C > 0$ independent of $k \ge k_0> 0$ such that 
for all $(u,m,\uext)$ and $(v,\lambda,\vext) \in H^1(\Omega) \times H^{-\frac{1}{2}}(\Gamma) \times H^{\frac{1}{2}}(\Gamma)$ 
there holds the G{\aa}rding inequality 
\begin{align*}
& \Re \left( \T\bigl( (v,\lambda,\vext),(v,\lambda,\vext)\bigr) 
+ \langle (v,\lambda,\vext),\Theta (v,\lambda,\vext)\rangle 
\right)\\
&\qquad\ge c  \bigl\{ \|\Ad^{\frac12} \nabla v\|^2_{0,\Omega} + 
k^2\| n\, v\|^2_{0,\Omega} + 
\|\lambda\|^2_{-\frac{1}{2},\Gamma} + 
\|\vext\|^2_{\frac{1}{2},\Gamma}
\bigr\} \\
&\qquad\quad - C \bigl\{k^2\|n\, v\|^2_{0,\Omega} +  k^4 \|\lambda\|^2_{-\frac{3}{2},\Gamma}  + 
             k^4 
             \|\vext\|^2_{-\frac{1}{2},\Gamma}
             + k^6 \|\vext\|^2_{-\frac{3}{2},\Gamma}
    \bigr\},
\end{align*}
as well as the continuity estimate
\begin{align*}
  & |\T\bigl((u,m,\uext),(v,\lambda,\vext) \bigr)| \\
  &\leq C 
\bigl\{ 
\|\Ad^{\frac12} \nabla u\|^2_{0,\Omega} + k^2\| n\, u\|^2_{0,\Omega}+ k \|u\|^2_{0,\Gamma} + 
\|m\|^2_{-\frac{1}{2},\Gamma} + \|\uext\|^2_{\frac{1}{2},\Gamma} + 
k^2 \|\uext\|^2_{-\frac{1}{2},\Gamma} 
\bigr\}^{\frac12} \\
& \qquad\times 
\bigl\{ 
\|\Ad^{\frac12} \nabla v\|^2_{0,\Omega} +k^2\| n\, v\|^2_{0,\Omega}  + k \|v\|^2_{0,\Gamma} + 
\|\vext\|^2_{\frac{1}{2},\Gamma} + 
                      k^4 
                       \|\vext\|^2_{-\frac{1}{2},\Gamma} + 
                       k^{6} \|\vext\|^2_{-\frac{3}{2},\Gamma}\\
&\qquad\qquad  + 
k^2 \|\lambda \|^2_{-\frac{1}{2},\Gamma} + 
k^4 \|\lambda \|^2_{-\frac{3}{2},\Gamma}  
\bigr\}^{\frac12}.
\end{align*}
\end{cor}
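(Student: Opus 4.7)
The plan is to derive both assertions from Theorem~\ref{thm:k-explicit-garding}, parts~\eqref{item:thm:k-explicit-garding-ii} and~\eqref{item:thm:k-explicit-garding-iii}, by controlling the abstract operator pairings $\langle\cdot,\Theta\cdot\rangle$ and $\langle\cdot,\widetilde\Theta\cdot\rangle$ directly in terms of weighted Sobolev norms, and then absorbing the top-order terms into the coercive contribution via Young's inequality. Throughout, I would use the decomposition $\Theta_{\cdot,\cdot}=\Theta_{f,\cdot,\cdot}+\Theta_{{\mathcal A},\cdot,\cdot}$ and treat the two halves separately, since they enjoy quite different bounds.

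For the G{\aa}rding inequality, the term $2((\k)^2 v,v)_{0,\Omega}$ appearing in $\langle (v,\lambda,\vext),\Theta(v,\lambda,\vext)\rangle$ equals $2k^2\|n v\|^2_{0,\Omega}$ and, when moved to the right-hand side, directly produces the $k^2\|n v\|^2_{0,\Omega}$ compact perturbation. On the finite-shift half of each of the four remaining duality pairings, I would exploit~\eqref{item:thm:k-explicit-garding-i} at the top of the admissible range, e.g.\ $s=0$, $s'=3$, which yields $\|\Theta_{f,\uext,\vext}\vext\|_{\frac{3}{2},\Gamma}\lesssim k^{2}\|\vext\|_{\frac{1}{2},\Gamma}$. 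Pairing against $\|\vext\|_{-\frac{3}{2},\Gamma}$ and applying Young's inequality generates $\epsilon\|\vext\|^2_{\frac{1}{2},\Gamma}+C_\epsilon k^{4}\|\vext\|^2_{-\frac{3}{2},\Gamma}$; the analogous treatment of the three remaining finite-shift blocks produces the $k^4\|\lambda\|^2_{-\frac{3}{2},\Gamma}$ and $k^4\|\vext\|^2_{-\frac{1}{2},\Gamma}$ terms in the statement.

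The analytic half requires the key observation that, for $w\in\boldsymbol{\mathcal A}(C_1,\gamma_1,\Omega)$ and $k\ge k_0$, one has $\|w\|_{H^s(\Omega)}\lesssim C_1 k^{s+1}$, whence by the trace theorem both $\|\gammazint w\|_{s,\Gamma}$ and $\|\gammaoint w\|_{s,\Gamma}$ are bounded by $k^{s+c}$ times the weak norm of the argument of $\AVtilde'$ or $\AKtilde'$. The borderline contribution $k^2\langle\vext,\gammazint\AVtilde'\vext\rangle$ inside $\Theta_{{\mathcal A},\uext,\vext}$ is then estimated by $k^{2}\|\vext\|_{\frac{1}{2},\Gamma}\|\gammazint\AVtilde'\vext\|_{-\frac{1}{2},\Gamma}\lesssim k^{3}\|\vext\|_{\frac{1}{2},\Gamma}\|\vext\|_{-\frac{3}{2},\Gamma}$, and Young's now yields the $k^{6}\|\vext\|^2_{-\frac{3}{2},\Gamma}$ term in the statement; the remaining analytic pieces are handled by the same template and introduce no worse $k$-weights. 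The continuity estimate is obtained in the same spirit from~\eqref{item:thm:k-explicit-garding-iii}: one adds back $\Theta$ and subtracts $\widetilde\Theta$ by the triangle inequality, bounds the $\widetilde\Theta$-pairings via Cauchy--Schwarz using~\eqref{eq:tildetheta} (so that they fit under the $k\|u\|^2_{0,\Gamma}$ and $k^{2}\|\uext\|^2_{-\frac{1}{2},\Gamma}$ pieces on the right-hand side), and bounds the $\Theta$-pairings by the same splitting, now distributed asymmetrically between the two factors.

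The main obstacle is the combinatorial bookkeeping: each of the eight scalar pairings admits several admissible choices of Sobolev index in~\eqref{eq:estimates-S}, and the precise $k$-powers in the statement are sensitive to this choice. Obtaining in particular the $k^{6}\|\vext\|^2_{-\frac{3}{2},\Gamma}$ coefficient will force the analytic factors in the $k^2$-weighted block to be paired with the strong norm $\|\vext\|_{\frac{1}{2},\Gamma}$ rather than with a weaker one, while the finite-shift contributions must use $s'$ at its maximal value $s+3$ to bring the $k$-weight as low as the stated bound allows. Once these index choices are made consistently across all pairings, only routine Cauchy--Schwarz and Young-type manipulations remain.
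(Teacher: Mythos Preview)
Your overall strategy matches the paper's proof: both start from Theorem~\ref{thm:k-explicit-garding}\,\eqref{item:thm:k-explicit-garding-ii}--\eqref{item:thm:k-explicit-garding-iii}, split each $\Theta_{\cdot,\cdot}=\Theta_{f,\cdot,\cdot}+\Theta_{{\mathcal A},\cdot,\cdot}$, bound the two halves separately via the mapping properties in~\eqref{item:thm:k-explicit-garding-i} and via trace estimates on the class $\boldsymbol{\mathcal A}$, and close with Cauchy--Schwarz/Young. Your handling of the $\widetilde\Theta$ blocks and of the finite-shift parts (taking $s'$ at the top of the admissible range) is fine and leads to terms dominated by those in the statement.

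There is, however, a concrete slip in your treatment of the analytic block. The bound $\|\gammazint\AVtilde'\vext\|_{-\frac12,\Gamma}\lesssim k\,\|\vext\|_{-\frac32,\Gamma}$ underlying your claimed $k^{3}$ is not available: the Dirichlet trace is not bounded $L^2(\Omega)\to H^{-\frac12}(\Gamma)$, and the multiplicative trace inequality only gives $k^{3/2}$, so your pairing produces $k^{7/2}\|\vext\|_{\frac12,\Gamma}\|\vext\|_{-\frac32,\Gamma}$ and hence $k^{7}\|\vext\|^2_{-\frac32,\Gamma}$ after Young, not $k^{6}$. The paper avoids this by pairing the first slot in $H^{-\frac12}(\Gamma)$ rather than in $H^{\frac12}(\Gamma)$: then $|\langle\vext,k^{2}\gammazint\AVtilde'\vext\rangle|\le k^{2}\|\vext\|_{-\frac12,\Gamma}\|\gammazint\AVtilde'\vext\|_{\frac12,\Gamma}\lesssim k^{4}\|\vext\|_{-\frac12,\Gamma}\|\vext\|_{-\frac32,\Gamma}$, which Young splits as $k^{4}\|\vext\|^2_{-\frac12,\Gamma}+k^{4}\|\vext\|^2_{-\frac32,\Gamma}$. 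The $k^{6}\|\vext\|^2_{-\frac32,\Gamma}$ term in the statement actually arises from the off-diagonal pairing $\langle\lambda,\Theta_{{\mathcal A},m,\vext}\vext\rangle$: there $\|\lambda\|_{-\frac12,\Gamma}$ (which \emph{must} be absorbed into the coercive part) is paired against $\|\Theta_{{\mathcal A},m,\vext}\vext\|_{\frac12,\Gamma}\lesssim k^{3}\|\vext\|_{-\frac32,\Gamma}$, and Young yields $\epsilon\|\lambda\|^2_{-\frac12,\Gamma}+C_\epsilon k^{6}\|\vext\|^2_{-\frac32,\Gamma}$. This is exactly the bookkeeping you flagged; once the analytic pairings are oriented as in the paper, the rest of your outline goes through unchanged.
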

\begin{proof}
In view of Theorem~\ref{thm:k-explicit-garding}, \eqref{item:thm:k-explicit-garding-ii} and~\eqref{item:thm:k-explicit-garding-iii}, we have 
to estimate the terms in $\Theta$ and in $\widetilde\Theta$.
From the definitions of the operators $\Theta_{\A,\cdot,\cdot}$ in~\eqref{eq:theta_A} and
Theorem~\ref{thm:k-explicit-garding}, \eqref{item:thm:k-explicit-garding-i}, with
the (multiplicative) trace inequality, we get
\begin{subequations}
\label{eq:cor:Theta_A}
\begin{align}
  | \langle \uext, \Theta_{{\mathcal A},\uext,\vext} \vext\rangle| & 
\leq C k\|\uext\|_{-\frac12,\Gamma} \left[ 
k^{3} \|\vext\|_{-\frac{3}{2},\Gamma}  + 
k^{2} \|\vext\|_{-\frac{1}{2},\Gamma} \right], \\
  | \langle m, \Theta_{{\mathcal A},m,\vext} \vext\rangle| & 
\leq C \|m\|_{-\frac{1}{2},\Gamma} 
 k^3\|\vext\|_{-\frac{3}{2},\Gamma}, \\
| \langle \uext, \Theta_{{\mathcal A},\uext,\lambda} \lambda \rangle| & 
\leq C k \|\uext\|_{-\frac12,\Gamma} \left[ 
k^2 \|\lambda \|_{-\frac{3}{2},\Gamma}  +
                                                                       k \|\lambda \|_{-\frac{1}{2},\Gamma} \right], \\
| \langle m, \Theta_{{\mathcal A},m,\lambda} \lambda \rangle| & 
\leq C \|m\|_{-\frac{1}{2},\Gamma} 
k^{2} \|\lambda \|_{-\frac{3}{2},\Gamma} . 
\end{align}
\end{subequations}
Furthermore, from Theorem~\ref{thm:k-explicit-garding}, \eqref{item:thm:k-explicit-garding-i}, we get 
\begin{subequations}
\label{eq:cor:Theta_f}
\begin{align}
  \label{eq:cor:Theta_f_a}
| \langle \uext, \Theta_{f,\uext,\vext} \vext\rangle| & 
\leq C k^{-(1-s_1)} \|\uext\|_{\frac{3}{2}-s_1,\Gamma} \|\vext\|_{\frac{1}{2},\Gamma}, 
                                                       \qquad 3/2  < s_1 \leq 3, \\
  \label{eq:cor:Theta_f_b}
| \langle m, \Theta_{f,m,\vext} \vext\rangle| & 
\leq C k^{-(1+1-s_2)}\|m\|_{\frac{3}{2}-s_2,\Gamma} \|\vext\|_{\frac{1}{2},\Gamma}, 
\qquad 3/2 <  s_2 \leq 4,
  \\
  \label{eq:cor:Theta_f_c}
| \langle \uext, \Theta_{f,\uext,\lambda} \lambda \rangle| & 
\leq C k^{-(1-1-s_3)} \|\uext\|_{\frac{1}{2}-s_3,\Gamma} \|\lambda\|_{-\frac{1}{2},\Gamma}, 
                                                            \qquad 1/2 < s_3 \leq 2, \\
  \label{eq:cor:Theta_f_d}
| \langle m, \Theta_{f,m,\lambda} \lambda \rangle| & 
\leq C k^{-(1-s_4)} \|m\|_{\frac{1}{2}-s_4,\Gamma} \|\lambda\|_{-\frac{1}{2},\Gamma}, 
\qquad 1/2 < s_4 \leq 3 
\end{align}
\end{subequations}
(in Theorem~\ref{thm:k-explicit-garding}, \eqref{item:thm:k-explicit-garding-i},
  we have chosen
$s'=s_1$ and $s=0$ for~\eqref{eq:cor:Theta_f_a},
$s'=s_2$ and $s=1$  for~\eqref{eq:cor:Theta_f_b},
$s'=s_3$ and $s=-1$  for~\eqref{eq:cor:Theta_f_c},
$s'=s_4$ and $s=0$  for~\eqref{eq:cor:Theta_f_d},
and, by~\eqref{eq:tildetheta}
\begin{align*}
| \langle \uext, \widetilde{\widetilde\Theta}_{f,\uext,\vext} \vext\rangle| & 
\leq C k^{2} \|\uext\|_{-\frac{1}{2},\Gamma} \|\vext\|_{-\frac{1}{2},\Gamma} \\
| \langle \uext, \widetilde \Theta_{f,\uext,\vext} \vext\rangle| & 
\leq C k \|\uext\|_{-\frac{1}{2},\Gamma} \|\vext\|_{\frac{1}{2},\Gamma} \\
| \langle m, \widetilde \Theta_{f,m,\vext} \vext\rangle| & 
\leq C k \|m\|_{-\frac12,\Gamma} \|\vext\|_{-\frac{1}{2},\Gamma}, \\
| \langle \uext, \widetilde\Theta_{f,\uext,\lambda} \lambda \rangle| & 
\leq C k\|\uext\|_{\frac12,\Gamma} \|\lambda\|_{-\frac{1}{2},\Gamma}. 
\end{align*}
By selecting $s_1 = 2$, $s_2  = 2$, $s_3 = 1$, and $s_4 = 1$ 
in~\eqref{eq:cor:Theta_f}, we obtain 
the stated continuity estimate.

For the G{\aa}rding inequality, we employ 
Theorem~\ref{thm:k-explicit-garding}, \eqref{item:thm:k-explicit-garding-ii}
and have to use 
the bounds~\eqref{eq:cor:Theta_A}, \eqref{eq:cor:Theta_f} with 
$(u,m,\uext) = (v,\lambda,\vext)$.  
In~\eqref{eq:cor:Theta_A} we rearrange the powers of $k$ as follows:  
\begin{subequations}
\label{eq:cor:Theta_A-alternative}
\begin{align}
  | \langle \vext, \Theta_{{\mathcal A},\uext,\vext} \vext\rangle| & 
\leq C k^{2} \|\vext\|_{-\frac12,\Gamma} 
k^{2 } \|\vext\|_{-\frac{3}{2},\Gamma}  + 
k^{3} \|\vext\|^2_{-\frac{1}{2},\Gamma},\\ 
  | \langle \lambda, \Theta_{{\mathcal A},m,\vext} \vext\rangle| & 
\leq C \|\lambda\|_{-\frac{1}{2},\Gamma} 
 k^3\|\vext\|_{-\frac{3}{2},\Gamma}, \\
| \langle \vext, \Theta_{{\mathcal A},\uext,\lambda} \lambda \rangle| & 
\leq C k^2 \|\vext\|_{-\frac12,\Gamma} 
k \|\lambda \|_{-\frac{3}{2},\Gamma}  +
                                                                       k^2 \|\vext\|_{-\frac12,\Gamma} \|\lambda \|_{-\frac{1}{2},\Gamma} , \\
| \langle \lambda, \Theta_{{\mathcal A},m,\lambda} \lambda \rangle| & 
\leq C \|\lambda \|_{-\frac{1}{2},\Gamma} 
k^{2} \|\lambda \|_{-\frac{3}{2},\Gamma} . 
\end{align}
\end{subequations}
Combining these estimates~\eqref{eq:cor:Theta_A-alternative}
with the bounds~\eqref{eq:cor:Theta_f} for the choices 
$s_1 = 2$, $s_2  = 3$, $s_3 = 1$, and $s_4 = 2$ 
we obtain the stated G{\aa}rding inequality using Young's inequality. 
\end{proof}
{\footnotesize
\bibliography{bibliogr,biblio}
}
\bibliographystyle{plain}

\end{document}